\DeclareSymbolFont{cyrletters}{OT2}{wncyr}{m}{n}
\DeclareMathSymbol{\Sha}{\mathalpha}{cyrletters}{"58}
\newtheorem{prop}{Proposition}[section]
\newtheorem{lemma}{Lemma}
\newtheorem{corollary}{Corollary}
\newtheorem{conjecture}{Conjecture}
\newtheorem{definition}{Definition}
\theoremstyle{remark}
\newtheorem{remark}{Remark}
\newcommand{\ba}{\mathbb A}
\newcommand{\bq}{\mathbb Q}
\newcommand{\bz}{\mathbb Z}
\newcommand{\br}{\mathbb R}
\newcommand{\bc}{\mathbb C}
\newcommand{\bg}{\mathbb G}
\newcommand{\bF}{\mathbb F}
\newcommand{\co}{\mathcal O}
\newcommand{\cm}{\mathcal M}
\newcommand{\fp}{\mathfrak p}
\newcommand{\fq}{\mathfrak q}
\newcommand{\F}{\mathcal F}
\newcommand{\X}{\mathcal X}
\newcommand{\Y}{\mathcal Y}
\newcommand{\mydet}{\mathrm{det}}
\newcommand{\tr}{\tilde{\mathbb R}}
\newcommand{\Ar}{{\mathrm {ar}}}
\DeclareMathOperator{\Frob}{Fr}
\DeclareMathOperator{\ord}{ord}
\DeclareMathOperator{\Spec}{Spec}
\DeclareMathOperator{\rank}{rank}
\DeclareMathOperator{\trace}{Tr}
\DeclareMathOperator{\Pic}{Pic}
\DeclareMathOperator{\Ind}{Ind}
\DeclareMathOperator{\Br}{Br}
\DeclareMathOperator{\im}{im}
\DeclareMathOperator{\Gal}{Gal}
\DeclareMathOperator{\id}{id}
\DeclareMathOperator{\Hom}{Hom}
\DeclareMathOperator{\cok}{coker}
\DeclareMathOperator{\Ab}{Ab}
\DeclareMathOperator{\Set}{Set}
\DeclareMathOperator{\Sm}{Sm}
\DeclareMathOperator{\Schemes}{Sch}
\begin{document}

\title{Special values of the Zeta function of an arithmetic surface}

\author{Matthias Flach}
\author{Daniel Siebel}
\address{Dept. of Mathematics 253-37\\California Institute of Technology\\Pasadena CA 91125\\USA}
\subjclass[2010]{11G40 (primary) 14F20 14G10  }
\begin{abstract}{We study the special value conjecture for the Zeta function of a proper regular arithmetic scheme $\X$ introduced in \cite{Flach-Morin-16} in the case $n=1$. We compute the correction factor $C(\X,1)$ left unspecified in \cite{Flach-Morin-16}, thereby developing some results on the eh-topology introduced by Geisser \cite{Geisser06}. We then specialize further to the case where $\X$ is an arithmetic surface and show that the conjecture of \cite{Flach-Morin-16} is equivalent to the Birch and Swinnerton-Dyer Conjecture.}\end{abstract}

\maketitle
\section{Introduction}

Let $\X$ be a regular scheme of dimension $d$, proper over $\Spec(\bz)$. In \cite{Flach-Morin-16}[Conj. 5.11, 5.12] the first author and Morin have formulated a conjecture on the vanishing order and the leading Taylor coefficient of the Zeta-function $\zeta(\X,s)$ of $\X$ at integer arguments $s=n\in\bz$ in terms of what we call Weil-Arakelov cohomology complexes. More specifically, our conjecture involves a certain invertible $\bz$-module
\[ \Delta(\X/\bz,n):={\det}_\bz R\Gamma_{W,c}(\mathcal{X},\mathbb{Z}(n))\otimes_\bz {\det}_\bz R\Gamma(\X_{Zar},L\Omega_{\X/\bz}^{<n})\]
which can be attached to the arithmetic scheme $\X$ under various standard assumption (finite generation of \'etale motivic cohomology in a certain range being the most important). Under these assumptions one has a natural trivialization
\begin{equation}\lambda_\infty:\br\xrightarrow{\sim}\Delta(\X/\bz,n)\otimes_\bz\br\label{ourtheta}\end{equation}
and we conjecture
$$\lambda_{\infty}(\zeta^*(\mathcal{X},n)^{-1}\cdot C(\mathcal{X},n)\cdot\mathbb{Z})= \Delta(\mathcal{X}/\mathbb{Z},n)
$$
which determines the leading coefficient $\zeta^*(\mathcal{X},n)\in\br$ up to sign (all identities in this paper involving leading coefficients should be understood up to sign). Here $C(\X,n)\in\bq^\times$ is a certain correction factor, defined as a product over its $p$-primary parts and where the definition of each $p$-primary part involves $p$-adic Hodge theory. It is easy to see that $C(\X,n)=1$ for $n\leq 0$ and one also has $C(\X,n)=1$ for $\X$ of characteristic $p$. In \cite{Flach-Morin-18}[Rem. 5.2] it was then suggested that in fact $C(\X,n)$ has the simple form
\begin{equation}\label{explicitcorrectingfactor}
C(\X,n)^{-1}=\prod_{ i\leq n-1;\, j}(n-1-i)!^{(-1)^{i+j}\mathrm{dim}_{\bq}H^j(\X_{\bq},\Omega^i)}
\end{equation}
for any $n\in \bz$ and any $\X$. This formula is corroborated by the computation of $C(\Spec(\co_F),n)$ in \cite{Flach-Morin-16}[Prop. 5.34] for a number field $F$ all of whose completions $F_v$ are absolutely abelian. %In forthcoming work we shall show that with this formula our special value conjecture is compatible with the functional equation of $\zeta(\X,s)$. With formula (\ref{explicitcorrectingfactor}) we make a precise conjecture that determines $\zeta^*(\X,n)\in\br$ up to sign and is independent of $p$-adic Hodge theory.

In the present article we focus on the case $n=1$ and then specialize further to arithmetic surfaces. We give more evidence for formula (\ref{explicitcorrectingfactor}) by proving that it holds for $n=1$, i.e. that $C(\X,1)=1$, for arbitrary $\X$. If $\X$ is connected, flat over $\Spec(\bz)$ and of dimension $d=2$ we call $\X$ an arithmetic surface. Denoting by $F$ the algebraic closure of $\bq$ in the function field of $\X$, the structural morphism $\X\to\Spec(\bz)$ factors through a morphism
\begin{equation} f:\X\to\Spec(\co_F)=:S \label{fdef}\end{equation}
with $f_*\co_\X=\co_S$. We show that all the assumptions entering into our conjecture are satisfied for $n=1$ if $\X$ has finite Brauer group, or equivalently the Jacobian $J_F$ of $\X_F$ has finite Tate-Shafarevich group. To give an idea of what the conjecture says concretely, recall that the Zeta function of an arithmetic surface factors
\begin{equation}\zeta(\X,s)=\frac{\zeta(H^0,s)\zeta(H^2,s)}{\zeta(H^1,s)}=\frac{\zeta_F(s) \zeta_F(s-1)}{\zeta(H^1,s)}\label{factorization}\end{equation}
where $\zeta(H^i,s)$ should be viewed as the Zeta function of a relative $H^i$ of $f$ in the sense of a motivic (i.e. perverse) $t$-structure, and $\zeta(H^1,s)$ differs from the Hasse-Weil L-function of $J_F$ by finitely many Euler factors. Our conjecture is equivalent to the statements
\begin{equation} \ord_{s=1}\zeta(H^1,s)=\rank_\bz\Pic^0(\X)\label{vanishingatone}\end{equation}
and
\begin{equation} \zeta^*(H^1,1)=\frac{\#\Br(\overline{\X})\cdot\delta^2\cdot\Omega(\X)\cdot R(\X)}{(\#(\Pic^0(\X)_{tor}/\Pic(\co_F)))^2}\cdot\prod\limits_{\text{$v$ real}}\frac{\#\Phi_v}{\delta_v'\delta_v}\label{BSDforX}\end{equation}
where $\Pic^0(\X)$ is the kernel of the degree map on $\Pic(\X)$, $R(\X)$ is the regulator of a certain intersection pairing on $\Pic^0(\X)$ and $\Omega(\X)$ is the determinant of the period isomorphism between the finitely generated abelian groups $H^1(\X(\bc),2\pi i\cdot\bz)^{G_\br}$ and $H^1(\X,\co_\X)$.  The integer $\delta$ is the index of $\X_F$, i.e. the g.c.d. of the degrees of all closed points. Furthermore, $\Phi_v=J_F(F_v)/J_F(F_v)^0$ is the group of components of the group  of $F_v$-rational points of $J_F$ and $\delta_v'$, resp. $\delta_v$, are the period, resp. index of $\X_{F_v}$ over $F_v$. The group $\Br(\overline{\X})$ coincides with $\Br(\X)$ if $F$ has no real places and differs from $\Br(\X)$ by a $2$-torsion group in general. The group $\Br(\overline{\X})$ is naturally self-dual and the quantity $\#\Br(\overline{\X})\delta^2$ will also arise as the cardinality of a naturally self-dual group in our computation (one could call this group the $H^1$-part of the Brauer group, see Lemma \ref{h1br} below).

We then prove that our conjecture (\ref{vanishingatone}) and (\ref{BSDforX}) is equivalent to the Birch and Swinnerton-Dyer conjecture for the Jacobian of $\X_F$, provided that our intersection pairing agrees with the Arakelov intersection pairing. This result was shown if $f$ is smooth in \cite{Flach-Morin-16}[Thm. 5.27] but only rather indirectly via compatibility of both conjectures with the Tamagawa number conjecture. Here we give a direct proof without assumptions on $f$ (such as existence of a section which would simplify the proof considerably). If $\X$ is a smooth projective surface over a finite field, fibred over a curve $f:\X\to S$, the equivalence of the Birch and Swinnerton-Dyer formula for the Jacobian of the generic fibre with a special value conjectures for the Zeta function $\zeta(H^2_{abs},s)$ of the absolute $H^2$-motive of $\X$ has been much studied in the literature, going back to the article of Tate \cite{tate66}, see also \cite{gordon}, \cite{LLR}, \cite{LLRcor}. At least for suitable choices of $f$, the two functions $\zeta(H^1,s)$ and $\zeta(H^2_{abs},s)$ only differ by a very simple Euler factor (see Remark \ref{tate} below), and it seems likely that our methods will also give a simplification of the arguments in \cite{gordon}.  To the best of our knowledge, our special value conjecture is the first such for arithmetic surfaces of characteristic zero, and in this context one of course does not have an analogue of $H^2_{abs}$.

Combined with the analytic class number formula for the Dedekind Zeta function $\zeta_F(s)$ at $s=1$ and $s=0$ formula (\ref{BSDforX}) becomes
\begin{equation} \zeta^*(\X,1)=\frac{2^{r_1}(2\pi)^{r_2}}{\Omega(\X)\sqrt{|D_F|}}\cdot \frac{(\#\Pic^0(\X))_{tor}^2}{\#\Br(\overline{\X})\delta^2(\#\mu_F)^2}\cdot\frac{R_F^2}{R(\X)}\cdot\prod\limits_{\text{$v$ real}}\frac{\delta_v'\delta_v}{\#\Phi_v}.\label{zetaatone}\end{equation}
Since there are now examples of elliptic curves $E/\bq$ for which $\Sha(E)$ is known to be finite and the Birch and Swinnerton-Dyer conjecture is completely proven \cite{Wan}[Thm. 9.3], \cite{LiLiuTian}[Thm. 1.2] our conjectures (\ref{vanishingatone}), (\ref{BSDforX}) and (\ref{zetaatone}) hold for any regular model $\X$ of any principal homogenous space of any elliptic curve isogenous over $\bq$ to such an $E$.

We give a brief summary of each section below. In section \ref{sec:correction} we prove that $C(\X,1)=1$ for general $\X$. Since $\bz(1)=\bg_m[-1]$ the proof involves elementary properties of the sheaf $\bg_m$ in the \'etale topology. However, the $p$-part of $C(\X,1)$ is defined in \cite{Flach-Morin-16}[Def. 5.6]  via the eh-topology on schemes over $\bF_p$ \cite{Geisser06}, and our proof eventually reduces to a curious statement, Prop. \ref{strangeprop} below, which in some sense says that the difference between \'etale and eh-cohomology is the same for the sheaves $\bg_m$ and $\co$. In order to prove Prop. \ref{strangeprop} we develop some results on the eh-topology which might be of independent interest.

In section \ref{sec:av} we prove, for an arithmetic surface $\X$, Conjecture $\mathbf{AV}(\mathcal{X},1)$ of \cite{Flach-Morin-16}[Conj. 6.23] using results of Saito \cite{Saito89}. This conjecture is necessary to define the Weil-\'etale complexes in terms of which our special value conjecture is formulated.

In section \ref{sec:h1} we define Weil-\'etale complexes associated to the relative $H^1$-motive of $f$ and then translate our special value conjecture for $\zeta(H^1,s)$ from the formulation in terms of a fundamental line to the explicit form (\ref{BSDforX}) given above.

Finally, in section \ref{sec:comparison} we prove the equivalence to the Birch and Swinnerton-Dyer conjecture. The two key ingredients are a formula due to Geisser \cite{Geisser17} relating the cardinalities of $\Br(\X)$ and $\Sha(J_F)$ and Lemma \ref{innerprod} about the Arakelov intersection pairing in exact sequences. We give a different proof of Geisser's formula in order to generalize it from totally imaginary to arbitrary number fields $F$.

\section{The correction factor for $n=1$}\label{sec:correction}

We recall some definitions from \cite{Flach-Morin-16}. For any scheme $Z$ and integer $n$ we denote by $\bz(n)=z^n(-,2n-\bullet)$ Bloch's higher Chow complex, viewed as a complex of sheaves on the small \'etale site of $Z$. For any prime number $p$ we set
\begin{equation} R\Gamma(Z,\bz_p(n)):=R\Gamma(Z_{et},\bz_p(n)):=R\varprojlim_\nu R\Gamma(Z_{et},\bz(n)/p^\nu)   \notag\end{equation}
and
\begin{equation} R\Gamma(Z,\bq_p(n)):=R\Gamma(Z,\bz_p(n))\otimes_{\bz_p}\bq_p.\notag\end{equation}

For $\X$ regular, proper over $\Spec(\bz)$ and $n\in\mathbb{Z}$ we consider the derived de Rham complex modulo the Hodge filtration $L\Omega^*_{\mathcal{X}/\mathbb{Z}}/\mathrm{Fil}^n$ (see \cite{Illusie71}[VIII.2.1]) as a complex of abelian sheaves on the Zariski site of $\mathcal{X}$. We obtain a perfect complex of abelian groups
$$R\Gamma_{dR}(\mathcal{X}/\mathbb{Z})/F^n:=R\Gamma(\mathcal{X}_{Zar},L\Omega^*_{\mathcal{X}/\mathbb{Z}}/\mathrm{Fil}^n)$$
and its base change $R\Gamma_{dR}(\mathcal{X}_A/A)/F^n$ to any ring $A$.

Let $(\Schemes^d/\bF_p)_{eh}$ be the category of separated, finite type schemes $Z/\bF_p$ of dimension $\leq d$ with the $eh$-topology \cite{Geisser06}[Def. 2.1] and $(\Sm^d/\bF_p)_{et}$ the full subcategory of smooth schemes with the \'etale topology. One has a natural pair of adjoint functors on abelian sheaves\cite{Geisser06}[Lemma 2.5]
\begin{equation}(\rho_d^*,\rho_{d,*}): \Ab(\Schemes^d/\bF_p)^\sim_{eh} \to \Ab(\Sm^d/\bF_p)_{et}^\sim\label{rhod}\end{equation}
and we set
\begin{equation} R\Gamma(Z_{eh},\bz(n)):=R\Gamma(Z_{eh},\rho_d^*\bz(n)^{SV}),   \notag\end{equation}
\begin{equation} R\Gamma(Z_{eh},\bz_p(n)):=R\varprojlim_\nu R\Gamma(Z_{eh},\rho_d^*\bz(n)^{SV}/p^\nu)   \notag\end{equation}
and
\begin{equation} R\Gamma(Z_{eh},\bq_p(n)):=R\Gamma(Z_{eh},\bz_p(n))\otimes_{\bz_p}\bq_p\notag\end{equation}
where $\bz(n)^{SV}$ is the Suslin Voevodsky motivic complex on the smooth site $\Sm/\bF_p$. For smooth $Z/\bF_p$ there is a quasi-isomorphism
$\bz(n)\cong\bz(n)^{SV}$ on the small \'etale site of $Z$ but $\bz(n)$ is not functorial for arbitrary morphisms in $\Sm/\bF_p$.

Various results on the eh-topology in \cite{Geisser06} are only valid under a resolution of singularities assumption $R(\bF_p,d)$ for varieties of dimension $\leq d$ \cite{Geisser06}[Def. 2.4] and we shall indicate along the way which of our results depend on it. For example, if one assumes $R(\bF_p,d)$ then $\rho_d^*$ is exact. $R(\bF_p,d)$ holds for $d\leq 2$.

Under $R(\bF_p,d)$ and for smooth $Z/k$ one has an isomorphism \cite{Geisser06}[Thm. 4.3]
\begin{equation}  R\Gamma(Z_{eh},\rho_d^*\bz(n)^{SV})\cong R\Gamma(Z_{et},\bz(n)^{SV})\cong R\Gamma(Z_{et},\bz(n)).    \notag\end{equation}

Fix a prime number $p$ and let $\X$ be regular, proper and flat over $\Spec(\bz)$. The following is \cite{Flach-Morin-16}[Conj.5.5].
\begin{conjecture}\label{conjD_p} ${\bf D}_p(\mathcal{X},n)$
There is an exact triangle of complexes of $\mathbb{Q}_p$-vector spaces
$$R\Gamma_{dR}(\mathcal{X}_{\mathbb{Q}_p}/\mathbb{Q}_p)/F^n[-1]\rightarrow R\Gamma(\mathcal{X}_{\mathbb{Z}_p,et},\mathbb{Q}_p(n))\rightarrow R\Gamma(\mathcal{X}_{\mathbb{F}_p,eh},\mathbb{Q}_p(n)).$$
\end{conjecture}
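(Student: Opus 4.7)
The triangle is an incarnation of the syntomic fibre sequence of $p$-adic Hodge theory: the cofibre of the natural restriction $R\Gamma(\mathcal{X}_{\mathbb{Z}_p,et},\mathbb{Q}_p(n))\to R\Gamma(\mathcal{X}_{\mathbb{F}_p,eh},\mathbb{Q}_p(n))$ is expected to be the Hodge-filtered de Rham cohomology of the generic fibre. I would construct the triangle by combining the motivic localization sequence along $i:\mathcal{X}_{\mathbb{F}_p}\hookrightarrow\mathcal{X}_{\mathbb{Z}_p}$ with the $p$-adic comparison theorems on the generic fibre.

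\textbf{Step 1 (Localization).} Bloch's localization on the étale site gives
$$R\Gamma_{\mathcal{X}_{\mathbb{F}_p}}(\mathcal{X}_{\mathbb{Z}_p},\mathbb{Q}_p(n))\to R\Gamma(\mathcal{X}_{\mathbb{Z}_p,et},\mathbb{Q}_p(n))\to R\Gamma(\mathcal{X}_{\mathbb{Q}_p,et},\mathbb{Q}_p(n)).$$
On the generic fibre $\mathbb{Z}(n)/p^\nu\simeq\mu_{p^\nu}^{\otimes n}$, so the right-hand term is standard $p$-adic étale cohomology of a smooth proper $\mathbb{Q}_p$-scheme.

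\textbf{Step 2 (Syntomic on the generic fibre and identification with eh).} Apply the syntomic comparison (Fontaine--Messing for good reduction, Tsuji and Colmez--Nizio{\l} for semistable, Bhatt--Morrow--Scholze more generally) to obtain
$$R\Gamma_{\text{syn}}(\mathcal{X},\mathbb{Q}_p(n))\to R\Gamma(\mathcal{X}_{\mathbb{Q}_p,et},\mathbb{Q}_p(n))\to R\Gamma_{dR}(\mathcal{X}_{\mathbb{Q}_p}/\mathbb{Q}_p)/F^n,$$
where the syntomic term is constructed from the Frobenius-fixed part of crystalline cohomology of the special fibre. When $\mathcal{X}_{\mathbb{F}_p}$ is smooth, the Geisser--Levine theorem $\mathbb{Z}(n)/p^\nu\simeq W_\nu\Omega^n_{\log}[-n]$ on the small étale site, combined with Illusie--Katz--Messing, identifies this syntomic term (rationally) with $R\Gamma(\mathcal{X}_{\mathbb{F}_p,eh},\mathbb{Q}_p(n))$ up to the shift matching the localization triangle. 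For singular special fibres one would use de Jong alterations and eh-descent under the hypothesis $R(\mathbb{F}_p,d)$ to reduce to the smooth case. Splicing the localization and syntomic triangles by an octahedral argument and rotating then produces the asserted fibre sequence.

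\textbf{Main obstacle.} The critical hurdle is not any individual comparison but the construction of an integral/rational syntomic object functorial on the arithmetic scheme $\mathcal{X}_{\mathbb{Z}_p}$ itself, whose restrictions to the generic and special fibres realise, respectively, the Fontaine--Messing and Geisser--Levine identifications \emph{compatibly}. For smooth proper $\mathcal{X}/\mathcal{O}_K$ this is essentially classical syntomic cohomology, and for semistable reduction it is the Tsuji/Colmez--Nizio{\l} theory; in the generality of arbitrary regular $\mathcal{X}$ with possibly non-semistable special fibre, the requisite compatibility (involving $h$-descent for the derived de Rham complex together with a matching descent for $p$-adic vanishing cycles, compatible with eh-descent on the special fibre) is not fully available in the literature. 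This is why the statement $\mathbf{D}_p(\mathcal{X},n)$ must be treated as a conjectural input rather than a theorem.
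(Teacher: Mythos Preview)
Your assessment that $\mathbf{D}_p(\mathcal{X},n)$ is genuinely conjectural in full generality is correct, and your sketch via syntomic cohomology and $p$-adic comparison theorems is a reasonable outline of why one expects it to hold. You are also right that the main obstacle lies in the non-semistable case.

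However, the paper does not leave the conjecture entirely untouched: it \emph{proves} the case $n=1$ (Proposition~\ref{cgleicheins}), and does so by a route entirely different from yours. Rather than invoking any $p$-adic Hodge theory or syntomic machinery, the paper exploits the elementary identifications $\mathbb{Z}(1)\cong\mathbb{G}_m[-1]$ and $L\Omega^*_{\mathcal{X}/\mathbb{Z}}/\mathrm{Fil}^1\cong\mathcal{O}_{\mathcal{X}}$. The triangle is then built directly from the short exact sequence
\[
0\to 1+\mathcal{I}\to \mathbb{G}_{m,\mathcal{X}_{\mathbb{Z}_p}}\to i_*\mathbb{G}_{m,\mathcal{X}_{\mathbb{F}_p}}\to 0
\]
on $\mathcal{X}_{\mathbb{Z}_p,et}$. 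After $p$-adic completion, the first term is identified with $R\Gamma(\mathcal{X}_{\mathbb{Q}_p},\mathcal{O})$ via the formal logarithm and the theorem on formal functions (Lemma~\ref{oetale}), while the comparison of \'etale and eh cohomology of the special fibre with $\mathbb{Q}_p(1)$-coefficients is handled by an explicit argument using seminormalization (Proposition~\ref{strangeprop}).

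The contrast is instructive. Your approach, if it could be made to work, would handle all $n$ uniformly but requires deep comparison theorems whose compatibility in the required generality is not established. The paper's approach is completely elementary and unconditional (modulo $R(\mathbb{F}_p,d-1)$), but is entirely specific to $n=1$: it relies on $\mathbb{G}_m$ being a \emph{sheaf} rather than a complex, and on the de Rham side collapsing to $\mathcal{O}$. There is no obvious way to extend the paper's argument to $n\geq 2$, which is precisely why the general statement remains a conjecture.
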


Conjecture \ref{conjD_p} gives an isomorphism $\lambda_p=\lambda_{p}(\mathcal{X},n):$
\begin{align*}
&\left(\mathrm{det}_{\mathbb{Z}_p}  R\Gamma(\mathcal{X}_{\mathbb{Z}_p,et},\mathbb{Z}_p(n))\right)_{\mathbb{Q}_p}\\ \xrightarrow{\sim}&\ \mathrm{det}_{\mathbb{Q}_p}  R\Gamma(\mathcal{X}_{\mathbb{Z}_p,et},\mathbb{Q}_p(n))\\
\xrightarrow{\sim}&\ \mathrm{det}_{\mathbb{Q}_p}  R\Gamma(\mathcal{X}_{\mathbb{F}_p,eh},\mathbb{Q}_p(n))
\otimes_{\mathbb{Q}_p}\mathrm{det}^{-1}_{\mathbb{Q}_p}R\Gamma_{dR}(\mathcal{X}_{\mathbb{Q}_p}/\mathbb{Q}_p)/F^n\\
\xrightarrow{\sim}& \left(\mathrm{det}_{\mathbb{Z}_p}  R\Gamma(\mathcal{X}_{\mathbb{F}_p,eh},\mathbb{Z}_p(n))
\otimes_{\mathbb{Z}_p}\mathrm{det}^{-1}_{\mathbb{Z}_p}R\Gamma_{dR}(\mathcal{X}_{\mathbb{Z}_p}/\mathbb{Z}_p)/F^n\right)_{\mathbb{Q}_p}
\end{align*}
and we define
\begin{align*}d_{p}(\mathcal{X},n)\in \mathbb{Q}_p^\times/\mathbb{Z}_p^\times\end{align*}
such that
\begin{align}&\lambda_{p}\left(d_{p}(\mathcal{X},n)^{-1}\cdot \mathrm{det}_{\mathbb{Z}_p}  R\Gamma(\mathcal{X}_{\mathbb{Z}_p,et},\mathbb{Z}_p(n))\right)\notag\\
=\ &\mathrm{det}_{\mathbb{Z}_p}  R\Gamma(\mathcal{X}_{\mathbb{F}_p,eh},\mathbb{Z}_p(n))
\otimes_{\mathbb{Z}_p}\mathrm{det}^{-1}_{\mathbb{Z}_p}R\Gamma_{dR}(\mathcal{X}_{\mathbb{Z}_p}/\mathbb{Z}_p)/F^n.\notag\end{align}
We then set
\begin{equation}\chi(\mathcal{X}_{\mathbb{F}_p},\mathcal{O},n):= \sum_{i\leq n, j}(-1)^{i+j} \cdot (n-i)\cdot \mathrm{dim}_{\mathbb{F}_p}H^j(\mathcal{X}_{\mathbb{F}_p,eh},\rho_d^*\Omega^i)\label{ehchidef}\end{equation}
where $\Omega^i$ is the sheaf of Kaehler differentials on $\Sm/\bF_p$.  Under $R(\bF_p,d)$ and for smooth $Z/\bF_p$ one has an isomorphism \cite{Geisser06}[Thm. 4.7]
\begin{equation} H^i(Z_{et},\Omega^i)\cong H^i(Z_{eh},\rho_d^*\Omega^i).\notag\end{equation}

We finally set
\begin{equation}c_{p}(\mathcal{X},n):=p^{\chi(\mathcal{X}_{\mathbb{F}_p},\mathcal{O},n)}\cdot d_{p}(\mathcal{X},n)\notag\end{equation}
and
\begin{equation}C(\mathcal{X},n):=\prod_{p<\infty}\mid c_p(\mathcal{X},n)\mid_p.\notag\end{equation}

\begin{prop} Let $\X$ be regular of dimension $d$, proper and flat over $\Spec(\bz)$, and assume $R(\bF_p,d-1)$. Then Conjecture ${\bf D}_p(\mathcal{X},1)$ holds and one has $C(\X,1)=1$.
\label{cgleicheins}\end{prop}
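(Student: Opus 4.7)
For $n=1$, the derived de Rham complex modulo the Hodge filtration simplifies to the structure sheaf, $L\Omega^{<1}_{\X/\bz}=\co_\X$, so that $R\Gamma_{dR}(\X_{\bz_p}/\bz_p)/F^1=R\Gamma(\X_{\bz_p},\co_{\X_{\bz_p}})$. Using the quasi-isomorphisms $\bz(1)\cong\bg_m[-1]$ on the small \'etale site of any regular scheme and $\bz(1)^{SV}\cong\bg_m[-1]$ on $\Sm/\bF_p$, the middle and right terms of the triangle in $\mathbf{D}_p(\X,1)$ become respectively $R\Gamma(\X_{\bz_p,et},\bg_m)^\wedge_p\otimes_{\bz_p}\bq_p[-1]$ and $R\Gamma(\X_{\bF_p,eh},\bg_m)^\wedge_p\otimes_{\bz_p}\bq_p[-1]$, where $(-)^\wedge_p$ denotes derived $p$-completion.

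The plan is to build the required triangle as a composition of two simpler comparisons. First, I would use the short exact sequence of \'etale sheaves on $\X_{\bz_p}$
\[0\to 1+p\co_\X\to\bg_m\to i_*\bg_m\to 0,\]
where $i:\X_{\bF_p}\hookrightarrow\X_{\bz_p}$ is the closed immersion. The $p$-adic logarithm $\log:1+p\co_\X\to\co_\X$ (an equivalence of \'etale sheaves after $(-)^\wedge_p\otimes\bq_p$, with appropriate care at $p=2$ by passing to $1+4\co_\X$) together with the flat base change $R\Gamma(\X_{\bz_p},\co)\otimes_{\bz_p}\bq_p=R\Gamma(\X_{\bq_p},\co)$ then yields a distinguished triangle
\[R\Gamma(\X_{\bq_p},\co)[-1]\to R\Gamma(\X_{\bz_p,et},\bq_p(1))\to R\Gamma(\X_{\bF_p,et},\bg_m)^\wedge_p\otimes_{\bz_p}\bq_p[-1].\]
Second, I would invoke Prop.~\ref{strangeprop}, which identifies the cofiber of the natural comparison $R\Gamma(\X_{\bF_p,et},\bg_m)\to R\Gamma(\X_{\bF_p,eh},\bg_m)$ with the analogous cofiber for $\co$. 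The latter vanishes after $(-)^\wedge_p\otimes\bq_p$ since $\co$ is a $p$-torsion sheaf on the $\bF_p$-scheme $\X_{\bF_p}$ (as $p=0$ there). Composing yields the triangle asserted in $\mathbf{D}_p(\X,1)$.

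For $C(\X,1)=1$, I would track integral structures through this construction. The triangle I built identifies the natural $\bz_p$-lattice on the de Rham factor with $\det R\Gamma(\X_{\bz_p},p\co_\X)$ rather than the standard $\det R\Gamma(\X_{\bz_p},\co_\X)$; the short exact sequence $0\to p\co\to\co\to i_*\co_{\X_{\bF_p}}\to 0$ shows that, in their common $\bq_p$-line, these two lattices differ by a factor of $p$-adic valuation $\chi:=\sum_j(-1)^j\dim_{\bF_p}H^j(\X_{\bF_p},\co_{\X_{\bF_p}})$. Under the hypothesis $R(\bF_p,d-1)$, Geisser's comparison \cite{Geisser06}[Thm. 4.7] identifies $\chi$ with $\chi(\X_{\bF_p},\co,1)$. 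The integral form of Prop.~\ref{strangeprop} contributes only a $\bz_p$-unit (its $\co$-counterpart lives in $\bF_p$-vector spaces, whose determinant ratios are $\bz_p$-units in the $\bq_p$-line), and the logarithm either is an integral isomorphism (for $p$ odd) or has balanced torsion (for $p=2$, trivial determinant contribution). Putting these contributions together, $d_p(\X,1)=p^{-\chi(\X_{\bF_p},\co,1)}\cdot u$ for some $u\in\bz_p^\times$, so $c_p(\X,1)=p^{\chi(\X_{\bF_p},\co,1)}\cdot d_p(\X,1)\in\bz_p^\times$, and $C(\X,1)=\prod_p|c_p(\X,1)|_p=1$.

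The principal obstacle is Prop.~\ref{strangeprop} itself, which matches the \'etale-vs-eh defects for $\bg_m$ and $\co$ on possibly singular $\bF_p$-schemes of dimension $\leq d-1$. Its proof requires a systematic treatment of eh-cohomology for the multiplicative sheaf $\bg_m$ via blow-up descent, using $R(\bF_p,d-1)$ to reduce inductively to the smooth case where eh and \'etale coincide. A secondary technicality is the integral bookkeeping through the logarithm at $p=2$, handled uniformly by working with $1+4\co_\X$.
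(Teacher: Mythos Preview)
Your overall architecture is the same as the paper's: the short exact sequence $0\to 1+p\co_\X\to\bg_m\to i_*\bg_m\to 0$, the $p$-adic logarithm to identify the left-hand term rationally with $R\Gamma(\X_{\bq_p},\co)$, and then Prop.~\ref{strangeprop} to pass from \'etale to eh on the special fibre. The rational triangle of $\mathbf{D}_p(\X,1)$ is obtained correctly.

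The integral accounting, however, contains a genuine error. You invoke \cite{Geisser06}[Thm.~4.7] to identify $\chi:=\chi_{et}(\X_{\bF_p},\co)$ with the eh Euler characteristic $\chi(\X_{\bF_p},\co,1)$, but that theorem only applies to \emph{smooth} $\bF_p$-schemes, and $\X_{\bF_p}$ is in general singular. The two Euler characteristics differ, and their difference is exactly what Prop.~\ref{strangeprop} measures. Correspondingly, your claim that ``the integral form of Prop.~\ref{strangeprop} contributes only a $\bz_p$-unit'' is wrong: the cofibre $C$ of the \'etale-to-eh map on $\bz_p(1)$ has finite cohomology with $\chi(C)=\chi(C^{add})=\chi_{et}(\X_{\bF_p},\co)-\chi(\X_{\bF_p},\co,1)$, so its determinant contribution is $p^{\chi_{et}-\chi_{eh}}$, not a unit. (That $C^{add}$ is $\bF_p$-linear makes its Euler characteristic a power of $p$, not a unit.)

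The paper's bookkeeping is: the logarithm step (its Lemma~\ref{oetale}) gives a factor $p^{\chi_{et}(\X_{\bF_p},\co)}$ on the de Rham side, and Prop.~\ref{strangeprop} gives a factor $p^{\chi_{et}(\X_{\bF_p},\co)-\chi(\X_{\bF_p},\co,1)}$ on the eh side, so that $d_p(\X,1)=p^{-\chi(\X_{\bF_p},\co,1)}$ and hence $c_p(\X,1)=1$. Your two mistakes happen to cancel numerically, which is why you reach the right conclusion, but neither intermediate claim is valid. Fixing this is straightforward once you read Prop.~\ref{strangeprop} as an equality of Euler characteristics rather than an identification of complexes, and drop the appeal to \cite{Geisser06}[Thm.~4.7] on the non-smooth fibre.
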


\begin{proof} Fix a prime number $p$. Then the base change $\X_{\bz_p}$ is again a regular scheme. For $n=1$ one has isomorphisms $\bz(1)\cong\bg_m[-1]$ on $\X_{\bz_p,et}$ and $\bz(1)^{SV}\cong\bg_m[-1]$ on $\Sm/\bF_p$.  One has an exact sequence of coherent sheaves
\[ 0\to \mathcal{I}\to \co_{\X_{\bz_p}}\to i_*\co_{\X_{\bF_p}}\to 0\]
and an exact sequence of abelian sheaves
\[ 0\to 1+\mathcal{I}\to \bg_{m,\X_{\bz_p}}\to i_*\bg_{m,\X_{\bF_p}}\to 0\]
on $\X_{\bz_p,et}$. Here $1+\mathcal{I}$ is just our notation for the kernel, the sections of this sheaf over an \'etale $U\to\X_{\bz_p}$ need not coincide with $1+\mathcal{I}(U)$. Passing to $p$-adic completions and shifting by $[-1]$ we obtain an exact triangle
\begin{equation} R\varprojlim_\nu R\Gamma(\X_{\bz_p,et},(1+\mathcal{I})/p^\nu)[-1]\to R\Gamma(\X_{\bz_p,et},\bz_p(1))\to R\Gamma(\X_{\bF_p,et},\bz_p(1)) \to .\label{t1}\end{equation}
For $n=1$ one has an isomorphism $L\Omega^*_{\mathcal{X}/\mathbb{Z}}/\mathrm{Fil}^1\cong \co_\X$ and (\ref{ehchidef}) specializes to
\begin{equation}\chi(\mathcal{X}_{\mathbb{F}_p},\mathcal{O},1)=\chi(\mathcal{X}_{\mathbb{F}_p},\mathcal{O}):=\sum_{j}(-1)^{j} \cdot \mathrm{dim}_{\mathbb{F}_p}H^j(\mathcal{X}_{\mathbb{F}_p,eh},\rho_d^*\co).\label{oeulerchar}\end{equation}
The outer terms in (\ref{t1}) are respectively computed by Lemma \ref{oetale} and Proposition \ref{strangeprop} below. Hence for an arithmetic surface we obtain the exact triangle
$$R\Gamma(\mathcal{X}_{\mathbb{Q}_p},\co_{\mathcal{X}_{\mathbb{Q}_p}})[-1]\rightarrow R\Gamma(\mathcal{X}_{\mathbb{Z}_p,et},\mathbb{Q}_p(1))\rightarrow R\Gamma(\mathcal{X}_{\mathbb{F}_p,eh},\mathbb{Q}_p(1))$$
of Conjecture ${\bf D}_p(\mathcal{X},1)$ after scalar extension to $\bq_p$. Moreover, Lemma \ref{oetale} and Proposition \ref{strangeprop} show that
\begin{align*}\lambda_{p}\left(\mathrm{det}_{\mathbb{Z}_p}  R\Gamma(\mathcal{X}_{\mathbb{Z}_p,et},\mathbb{Z}_p(1))\right)
=\ &p^{\chi_{et}(\mathcal{X}_{\mathbb{F}_p},\mathcal{O})-\chi(\mathcal{X}_{\mathbb{F}_p},\mathcal{O})}\cdot \mathrm{det}_{\mathbb{Z}_p}  R\Gamma(\mathcal{X}_{\mathbb{F}_p,eh},\mathbb{Z}_p(1))\\
&\otimes_{\mathbb{Z}_p}p^{-\chi_{et}(\mathcal{X}_{\mathbb{F}_p},\mathcal{O})}\cdot
\mydet_{\bz_p}^{-1}R\Gamma(\X_{\bz_p},\co_{\X_{\bz_p}})\notag
\end{align*}
inside $\mathrm{det}_{\mathbb{Q}_p}  R\Gamma(\mathcal{X}_{\mathbb{Z}_p,et},\mathbb{Q}_p(1))$, i.e. we have $d_{p}(\mathcal{X},1)=p^{-\chi(\mathcal{X}_{\mathbb{F}_p},\mathcal{O})}$ and therefore $c_{p}(\X,1)=1$.
This finishes the proof of Prop. \ref{cgleicheins}.

\end{proof}

\begin{lemma} There is an isomorphism
\begin{equation} R\varprojlim_\nu R\Gamma(\X_{\bz_p,et},(1+\mathcal{I})/p^\nu)\otimes_{\bz_p}\bq_p\cong R\Gamma(\X_{\bq_p},\co_{\X_{\bq_p}})\label{tangentiso}\end{equation}
so that
\begin{equation} \mydet_{\bz_p}R\varprojlim_\nu R\Gamma(\X_{\bz_p,et},(1+\mathcal{I})/p^\nu)=p^{\chi_{et}(\mathcal{X}_{\mathbb{F}_p},\mathcal{O})}\cdot
\mydet_{\bz_p}R\Gamma(\X_{\bz_p},\co_{\X_{\bz_p}})
\notag\end{equation}
where
\begin{equation}\chi_{et}(\mathcal{X}_{\mathbb{F}_p},\mathcal{O}):= \sum_{j}(-1)^{j} \cdot \mathrm{dim}_{\mathbb{F}_p}H^j(\mathcal{X}_{\mathbb{F}_p,et},\co)\label{oeulercharet}\end{equation}
is the analogue of (\ref{oeulerchar}) for the \'etale (or Zariski) topology.
\label{oetale}\end{lemma}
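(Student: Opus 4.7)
The plan is to reduce both sides of (\ref{tangentiso}) to the cohomology of the coherent sheaf $\co_{\X_{\bz_p}}$ by means of the $p$-adic logarithm on the $\mathcal{I}$-adic filtration of $1+\mathcal{I}$. Since $\X_{\bz_p}$ is flat over $\bz_p$ one has $\mathcal{I}=p\co_{\X_{\bz_p}}$, and the subsheaves $U_k:=1+\mathcal{I}^k$ form a decreasing filtration of $U_1=1+\mathcal{I}$ whose graded pieces satisfy $U_k/U_{k+1}\cong\mathcal{I}^k/\mathcal{I}^{k+1}\cong i_*\co_{\X_{\bF_p}}$ via $1+p^kx\mapsto x\bmod p$ (a group homomorphism because $xy\in\mathcal{I}^{2k}\subseteq\mathcal{I}^{k+1}$ for $k\geq 1$). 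A binomial expansion will show $(1+\mathcal{I})^{p^\nu}=1+p^\nu\mathcal{I}=U_{\nu+1}$ for $p$ odd, so $(1+\mathcal{I})/p^\nu=U_1/U_{\nu+1}$, and the $p$-adic logarithm $\log(1+x)=\sum_{n\geq 1}(-1)^{n-1}x^n/n$ together with its inverse $\exp$ induces an isomorphism of pro-systems of abelian sheaves on $\X_{\bz_p,et}$,
\[\bigl((1+\mathcal{I})/p^\nu\bigr)_\nu\cong\bigl(\mathcal{I}/p^\nu\mathcal{I}\bigr)_\nu.\]

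Next I would compute the right hand side by applying $R\varprojlim_\nu R\Gamma(\X_{\bz_p,et},-)$ to the short exact sequence of sheaves
\[0\to\mathcal{I}/p^\nu\mathcal{I}\to\co_{\X_{\bz_p}}/p^{\nu+1}\to i_*\co_{\X_{\bF_p}}\to 0\]
induced by the flatness sequence $0\to\co\xrightarrow{p}\co\to i_*\co_{\X_{\bF_p}}\to 0$. Since $R\Gamma(\X_{\bz_p,et},\co)\cong R\Gamma(\X_{\bz_p,Zar},\co)$ is a perfect complex of $\bz_p$-modules by properness, it is automatically $p$-adically complete, so $R\varprojlim_\nu R\Gamma(\X_{\bz_p,et},\co/p^{\nu+1})\cong R\Gamma(\X_{\bz_p,et},\co)$ and one obtains the distinguished triangle
\[R\varprojlim\nolimits_\nu R\Gamma(\X_{\bz_p,et},\mathcal{I}/p^\nu\mathcal{I})\to R\Gamma(\X_{\bz_p,et},\co)\to R\Gamma(\X_{\bF_p,et},\co).\]
Comparison with the triangle coming directly from $0\to\co\xrightarrow{p}\co\to i_*\co_{\X_{\bF_p}}\to 0$ identifies the first term with $R\Gamma(\X_{\bz_p,et},\co)$, the map to the middle term being multiplication by $p$. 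Combined with the log identification above and the flat base change isomorphism $R\Gamma(\X_{\bz_p,et},\co)\otimes_{\bz_p}\bq_p\cong R\Gamma(\X_{\bq_p},\co_{\X_{\bq_p}})$, in which $p$ becomes invertible, this produces (\ref{tangentiso}). The same triangle also gives
\[\mydet_{\bz_p}R\varprojlim\nolimits_\nu R\Gamma(\X_{\bz_p,et},\mathcal{I}/p^\nu\mathcal{I})=\mydet_{\bz_p}R\Gamma(\X_{\bz_p,et},\co)\otimes\mydet_{\bz_p}^{-1}R\Gamma(\X_{\bF_p,et},\co),\]
and $\mydet_{\bz_p}R\Gamma(\X_{\bF_p,et},\co)=p^{-\chi_{et}(\X_{\bF_p},\co)}\bz_p$ inside $\bq_p$ since that complex is a perfect torsion complex of $\bF_p$-vector spaces with Euler characteristic $\chi_{et}(\X_{\bF_p},\co)$. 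This yields the claimed factor $p^{\chi_{et}(\X_{\bF_p},\co)}$.

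The main technical subtlety is the case $p=2$, where $\exp$ converges on $\mathcal{I}^2=4\co_{\X_{\bz_2}}$ but not on all of $\mathcal{I}$, so that $(1+\mathcal{I})^{2^\nu}$ sits strictly inside $U_{\nu+1}$. Here one treats the bottom filtration piece $U_1/U_2\cong i_*\co_{\X_{\bF_2}}$ separately and applies the log/exp identification to the tail $U_2/U_{\nu+1}\cong\mathcal{I}^2/\mathcal{I}^{\nu+1}$; the resulting comparison of pro-systems agrees with the desired one up to bounded $2$-torsion that is absorbed by the $\bz_p$-determinant computation via the triangle argument above.
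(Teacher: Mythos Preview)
Your overall strategy—using the $p$-adic logarithm to compare the multiplicative and additive filtrations and then reading off the determinant from the triangle with $R\Gamma(\X_{\bF_p},\co)$—is sound and parallels the paper's. But the central claim that $\bigl((1+\mathcal{I})/p^\nu\bigr)_\nu\cong\bigl(\mathcal{I}/p^\nu\mathcal{I}\bigr)_\nu$ as pro-systems of sheaves on $\X_{\bz_p,et}$ is not correct as stated. The quotient $(1+\mathcal{I})/p^\nu$ is derived, and the kernel of $p^\nu$ on $1+\mathcal{I}$ is not zero: over the generic fibre $\X_{\bq_p}$ one has $1+\mathcal{I}=\bg_m$, so the kernel is $j_!\mu_{p^\nu}$ (for $p$ odd), whereas $\mathcal{I}$ is $p$-torsion-free. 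Thus the two sides differ by a shifted copy of $j_!\mu_{p^\nu}$, and this does not become pro-zero. What saves the argument is that $R\Gamma(\X_{\bz_p,et},j_!\mu_{p^\nu})=0$ by proper base change over the henselian base $\bz_p$ (since $i^*j_!=0$), so the discrepancy disappears \emph{after} taking cohomology—but you never invoke this, and without it your identification is false. A second, smaller gap: the equality $(1+\mathcal{I})^{p^\nu}=U_{\nu+1}$ does not follow from a binomial expansion (that gives only the inclusion $\subseteq$); producing $p^\nu$-th roots \'etale-locally requires Hensel's lemma in the strictly henselian stalks, which you should make explicit.

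The paper avoids both issues by first applying proper base change to replace $R\Gamma(\X_{\bz_p,et},(1+\mathcal{I})/p^\nu)$ by $R\Gamma(\X_{\bF_p,et},i^*(1+\mathcal{I})/p^\nu)$, and then proving a separate lemma (Lemma~\ref{formallemma}) identifying $i^*(1+\mathcal{I})/p^\nu$ with $(1+\mathcal{I}_{\mathfrak{X}})/p^\nu$ on the formal scheme $\mathfrak{X}$, where the structure sheaf is genuinely $p$-adically complete and $\log/\exp$ are honest sheaf isomorphisms on $1+\mathcal{I}_{\mathfrak{X}}^k$ for $k\geq 2$. The passage back to $R\Gamma(\X_{\bz_p},\co)$ is then via the theorem on formal functions. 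Your route is more direct in appearance but hides the same proper-base-change step; once you insert it (and justify the surjectivity via Hensel), the two arguments are essentially equivalent, with your triangle $R\Gamma(\co)\xrightarrow{p}R\Gamma(\co)\to R\Gamma(\co_{\X_{\bF_p}})$ playing the role of the paper's comparison of $\alpha$ and $\alpha^{add}$.
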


\begin{proof} Let $(\mathfrak{X},\co_{\mathfrak{X}})$ be the formal completion of $\X_{\bz_p}$ at the ideal $\mathcal{I}=(p)$. The underlying topological space of $\mathfrak{X}$ is $\X_{\bF_p}$ and we denote by $i:\mathfrak{X}\to\X_{\bz_p}$ the natural morphism of ringed spaces and \'etale topoi. We have an exact sequence on $\mathfrak{X}_{et}$
\[ 0\to 1+\mathcal{I}_{\mathfrak{X}}\to \bg_{m,\mathfrak{X}}\to \bg_{m,\X_{\bF_p}}\to 0.\]

\begin{lemma} For any $\nu\geq 1$ the natural morphism $i^*(1+\mathcal{I})\to 1+\mathcal{I}_{\mathfrak{X}}$
induces an isomorphism on $\mathfrak{X}_{et}$
\begin{equation} i^*(1+\mathcal{I})/p^\nu\to (1+\mathcal{I}_{\mathfrak{X}})/p^\nu.\label{formalcomp}\end{equation}
\label{formallemma}\end{lemma}

\begin{proof} Note that (\ref{formalcomp}) is meant in the derived sense, so we have to investigate the kernel and cokernel of multiplication by $p^\nu$ separately. For each (connected) \'etale $\Spec(A)\to\X_{\bz_p}$ the map on kernels is
\begin{equation}\mu_{p^\nu}(A)\cap (1+\mathcal{I})(A)=\mu_{p^\nu}(\hat{A})\cap (1+\mathcal{I}_{\mathfrak{X}})(\hat{A})\label{trivial}\end{equation}
where $\hat{A}$ denotes the $p$-adic completion. Now any element of $\mu_{p^\nu}(A)$ lies in the integral closure of $\bz_p$ in $A$  which coincides with the integral closure of $\bz_p$ in the fraction field $K$ of $A$, as $A$ is regular. Hence $\mu_{p^\nu}(A)$ is contained in the algebraic closure $L$ of $\bq_p$ in $K$. But $L/\bq_p$ is finite as $\X_{\bz_p}\to\Spec(\bz_p)$ is of finite type. We conclude that $\mu_{p^\nu}(A)$ is contained in $\co_L$ which is already $p$-adically complete, i.e. we have $\mu_{p^\nu}(\hat{A})=\mu_{p^\nu}(A)$. Hence the map (\ref{trivial}) is also an isomorphism (in fact both sides vanish unless $p=2$).

Concerning the cokernel of $p^\nu$ we first note that for each \'etale $\Spec(A)\to\X_{\bz_p}$ we have $(1+\mathcal{I}_{\mathfrak{X}})(\hat{A})=1+\mathcal{I}_{\mathfrak{X}}(\hat{A})$ since the inverse of any element $1+p\cdot x$ is given by a geometric series.  Moreover, the usual power series of the exponential and the logarithm induce an isomorphism
\begin{equation}
\log: 1+\mathcal{I}^\nu_{\mathfrak{X}}(\hat{A})=1+p^\nu\cdot\hat{A}\cong p^\nu\cdot\hat{A}=\mathcal{I}^\nu_{\mathfrak{X}}(\hat{A})
\label{logarithm}\end{equation}
for each $\nu\geq 1$ ($\nu\geq 2$ if $p=2$). In particular we conclude that any element in $1+p^{\nu+2}\cdot\hat{A}$ has a $p^\nu$-th root in $1+p\cdot\hat{A}$. Given $1+x\in 1+p\cdot\hat{A}$ and $\nu\geq 1$ we can write
\[ 1+x=1+x_0+p^{\nu+2}\cdot x_1=(1+x_0)\cdot \left(1+p^{\nu+2}\frac{x_1}{1+x_0}\right)\]
with $x_0\in\mathcal{I}(A)=p\cdot A$ and $x_1\in \hat{A}$ and we can also assume that $1+x_0\in A^\times$ since inverting $1+x_0$ does not remove any points of $\X_{\bF_p}=\mathfrak{X}$. Hence we find $1+x\in (1+\mathcal{I})(A)\cdot (1+\mathcal{I}_{\mathfrak{X}}(\hat{A}))^{p^\nu}$, i.e. that (\ref{formalcomp}) is a surjection on cokernels of multiplication by $p^\nu$.

To show that it is also an injection we need to consider an element $1+x_0\in (1+\mathcal{I})(A)$ that becomes a $p^\nu$-th power in $1+\mathcal{I}_{\mathfrak{X}}(\hat{B})$ for some \'etale neighborhood $\Spec(B)\to \Spec(A)$ of any given point $\fp\in\Spec(A/(p))$ and show that $1+x_0\in (1+\mathcal{I}(B'))^{p^\nu}$ in some \'etale neighborhood $\Spec(B')\to \Spec(B)$ of $\fp$.
Since $1+x_0\in (1+\mathcal{I}_{\mathfrak{X}}(\hat{B}))^{p^\nu}$ we have $1+x_0=1+p^{\nu+1}\cdot y_0$. Pick a prime $\fq$ of $B$ above $\fp$. If $y_0\in\fq$ we can write
\[ 1+p^{\nu+1}\cdot y_0=\frac{1+p^{\nu+1}\cdot (y_0+1)}{1+p^{\nu+1}\cdot(1+p^{\nu+1}\cdot y_0)^{-1}}\]
where both $y_0+1$ and $(1+p^{\nu+1}\cdot y_0)^{-1}$ do not lie in $\fq$. Hence we can assume $y_0\notin\fq$ and in fact $y_0\in B^\times$. We then adjoin an element $z$ satisfying the integral equation
\[ f(z):=\frac{(z+p)^{p^\nu}}{p^{\nu+1}}-\frac{z^{p^\nu}}{p^{\nu+1}}-z^{p^\nu}\cdot y_0=\frac{z^{p^\nu}}{p^{\nu+1}}\left[\left(1+\frac{p}{z}\right)^{p^\nu}-(1+p^{\nu+1}\cdot y_0)\right]=0\]
over $B$. Then $B'=B[z,\frac{1}{z}]$ contains a $p^\nu$-th root of $1+p^{\nu+1}\cdot y_0$ and
$$\Spec(B[z,\frac{1}{z}])\to\Spec(B)$$
is \'etale and surjective. Indeed it is clearly \'etale at all primes $\fq'\in\Spec(B)$ with $p\notin\fq'$ and if $p\in\fq'$ we have modulo $\fq'$
\[ f(z)\equiv \begin{cases} z^{p^\nu-1}+(p^\nu-1)z^{p^\nu-2}-z^{p^\nu}\cdot y_0\equiv z^{p^\nu-2}(1+z+z^2\cdot y_0) &\text{if $p=2$}\\z^{p^\nu-1}-z^{p^\nu}\cdot y_0 = z^{p^\nu-1}(1-z\cdot y_0)  & \text{if $p\neq 2$ }\end{cases}\]
so inverting $z$ removes all primes where $\fq'$ is ramified. Moreover, there is a prime $B[z,\frac{1}{z}]\cdot \fq'$ above $\fq'$ with a finite separable residue field extension (in fact the same residue field if $p$ is odd).
\end{proof}

Since $(1+\mathcal{I})/p^\nu$ has torsion cohomology, proper base change gives an isomorphism
\[ R\Gamma(\X_{\bz_p,et},(1+\mathcal{I})/p^\nu)\cong R\Gamma(\X_{\bF_p,et},i^*(1+\mathcal{I})/p^\nu)\]
and Lemma \ref{formallemma} then gives an isomorphism
\begin{equation} R\varprojlim_\nu R\Gamma(\X_{\bz_p,et},(1+\mathcal{I})/p^\nu)\cong R\Gamma(\mathfrak{X}_{et},1+\mathcal{I}_{\mathfrak{X}})\label{formaliso1}\end{equation}
since $1+\mathcal{I}_{\mathfrak{X}}$ is already $p$-adically complete. Consider the diagram with exact rows
\[\begin{CD} R\Gamma(\mathfrak{X}_{et},1+\mathcal{I}^k_{\mathfrak{X}})@>\alpha>> R\Gamma(\mathfrak{X}_{et},1+\mathcal{I}_{\mathfrak{X}})@>>> R\Gamma(\mathfrak{X}_{et},1+\mathcal{I}_{\mathfrak{X}}/1+\mathcal{I}^k_{\mathfrak{X}})@>>>{}\\
@V\log V\sim V @.@.@.\\
R\Gamma(\mathfrak{X}_{et},\mathcal{I}^k_{\mathfrak{X}}) @. @. @.\\
@VV\sim V @.@.@.\\
R\Gamma(\mathfrak{X},\mathcal{I}^k_{\mathfrak{X}})@>\alpha^{add}>>R\Gamma(\mathfrak{X},\co_{\mathfrak{X}})@>>>
R\Gamma(\mathfrak{X},\co_{\mathfrak{X}}/\mathcal{I}^k_{\mathfrak{X}})@>>>{}
\end{CD}\]
where the vertical isomorphism is induced by the logarithm (\ref{logarithm}) for $k$ large enough (in fact $k\geq 2$).
We have isomorphisms of abelian sheaves
\begin{equation}\mathcal{I}^{i}_{\mathfrak{X}}/\mathcal{I}^{i+1}_{\mathfrak{X}}\cong (1+\mathcal{I}^{i}_{\mathfrak{X}})/(1+\mathcal{I}^{i+1}_{\mathfrak{X}});\quad\quad x\mapsto 1+x \label{graded}\end{equation}
and
$R\Gamma(\mathfrak{X},\mathcal{I}^{i}_{\mathfrak{X}}/\mathcal{I}^{i+1}_{\mathfrak{X}})$
is a perfect complex of $\bF_p$-modules since $\mathfrak{X}$ is proper. It follows that the maps $\alpha$ and $\alpha^{add}$ are isomorphisms after tensoring with $\bq_p$. Finally recall the theorem of formal functions \cite{hartshorne}[Thm. III.11.1]
\begin{equation}R\Gamma(\mathfrak{X},\co_{\mathfrak{X}})\cong R\Gamma(\X_{\bz_p},\co_{\X_{\bz_p}}).\label{ff}\end{equation}
The isomorphism (\ref{tangentiso}) is then the composition of the scalar extensions to $\bq_p$ of $(\ref{formaliso1})$, $\alpha$, $\log$, $\alpha^{add}$ and $(\ref{ff})$. The above diagram and (\ref{graded}) show that under this isomorphism we have
\begin{align*} &\mydet_{\bz_p}R\varprojlim_\nu R\Gamma(\X_{\bz_p,et},(1+\mathcal{I})/p^\nu)\\
=&p^{-\chi(1+\mathcal{I}_{\mathfrak{X}}/1+\mathcal{I}^k_{\mathfrak{X}})+\chi(\co_{\mathfrak{X}}/\mathcal{I}^k_{\mathfrak{X}})}\cdot
\mydet_{\bz_p}R\Gamma(\X_{\bz_p},\co_{\X_{\bz_p}})\\
=&p^{\chi(\co_{\mathfrak{X}}/\mathcal{I}_{\mathfrak{X}})}\cdot
\mydet_{\bz_p}R\Gamma(\X_{\bz_p},\co_{\X_{\bz_p}})=p^{\chi_{et}(\mathcal{X}_{\mathbb{F}_p},\mathcal{O})}\cdot
\mydet_{\bz_p}R\Gamma(\X_{\bz_p},\co_{\X_{\bz_p}}).\end{align*}
\end{proof}

\subsection{Results on the eh-topology} To prepare for the proof of Prop. \ref{strangeprop} below we develop some results on the eh-topology which might be of independent interest. We first recall the notion of seminormalization of a scheme.

\begin{definition} The seminormalization $X^{sn}\to X$ of a scheme $X$ is an initial object in the full subcategory of schemes over $X$
consisting of universal homeomorphisms $Z\to X$ which induce isomorphisms on all residue fields.
\end{definition}

By \cite{stacks}[Lemma 28.45.7] the seminormalization always exists and $X^{sn}$ is a {\em seminormal} scheme, meaning that for any affine open $U$ the ring $A=\co(U)$ is a seminormal ring, i.e. if $x^2=y^3$ for some $x,y\in A$ then there is $a\in A$ with $x=a^3$ and $y=a^2$. Any seminormal scheme is reduced \cite{stacks}[Lemma 28.45.5], hence we have a factorization
\[ X^{sn}\to X^{red}\to X\]
by the universal property of either the reduction or the seminormalization.

By\cite{stacks}[28.45.7] the seminormalization is also the final object in the category of seminormal schemes above $X$, i.e.
\begin{equation} \Hom(Z,X^{sn})\xrightarrow{\sim}\Hom(Z,X) \label{univ}\end{equation}
for any seminormal scheme $Z$. Any normal scheme is seminormal and hence the normalization $X^n\to X$ of, say, a Noetherian scheme $X$  \cite{stacks}[Sec. 28.52] factors through the seminormalization $X^n\to X^{sn}$ by (\ref{univ}). If $X$ is moreover a Nagata scheme \cite{stacks}[Def. 27.13.1], for example if $X$ is of finite type over a field, then $X^n\to X$ is a finite morphism by \cite{stacks}[Lemma 28.52.10]. It follows that $X^{sn}\to X$ is finite since $X$ is Noetherian.

\begin{lemma} For any scheme $X$ the seminormalization $X^{sn}\to X$ induces an equivalence of \'etale topoi
\begin{equation}  X^{sn}_{et}\cong X_{et}. \label{snet}\end{equation}
For a scheme $X$ in $\Schemes^d/\bF_p$ the seminormalization $X^{sn}\to X$ induces an isomorphism $$X^{sn,\sim}\cong X^\sim$$
in $(\Schemes^d/\bF_p)^\sim_{eh}$ where $X^\sim$ denotes the eh sheaf associated to the presheaf represented by $X$. Hence for any (abelian) eh-sheaf $\F$ we have
\begin{equation} H^i(X_{eh},\F)\cong H^i(X^{sn}_{eh},\F)\label{sniso}\end{equation}
for all $i$.
\end{lemma}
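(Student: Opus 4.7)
For (\ref{snet}), since the seminormalization $X^{sn}\to X$ is by construction a universal homeomorphism inducing isomorphisms on residue fields, this is nothing but the classical fact that universal homeomorphisms induce equivalences of small \'etale topoi (SGA IV, Exp.\ VIII, Thm.\ 1.1, or Stacks Project, Tag 04DY). No further input from the theory of seminormalization is required.

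For the eh-statement, the plan is to exhibit $X^{sn}\to X$ as one leg of an abstract blow-up square, so that it becomes a cdh-, and a fortiori an eh-cover. Let $Z\hookrightarrow X$ be the closed subscheme cut out by the conductor ideal $\mathrm{Ann}_{\co_X}(\co_{X^{sn}}/\co_X)$; equivalently, $Z$ is the complement of the largest open $U\subseteq X$ on which $X$ is already seminormal. Because seminormalization is a local operation that is the identity on seminormal affines, $X^{sn}\to X$ restricts to the identity over $U$. Since every object of $\Schemes^d/\bF_p$ is Nagata, the normalization and hence the seminormalization are finite, so $X^{sn}\to X$ is proper. The Cartesian square with corners $X^{sn}\times_X Z$, $X^{sn}$, $Z$, $X$ is therefore an abstract blow-up square, and $\{X^{sn}\to X,\ Z\hookrightarrow X\}$ is a cdh-cover.

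I would then prove by Noetherian induction on $\dim T$ that every $T\in \Schemes^d/\bF_p$ admits an eh-cover by seminormal schemes. The base case $\dim T = 0$ is immediate from $T^{red}\to T$, since a finite disjoint union of field spectra is normal and hence seminormal. For the induction step, the cdh-cover $\{T^{sn}\to T,\ Z_T\hookrightarrow T\}$ writes $T$ as the seminormal piece $T^{sn}$ together with the strictly lower-dimensional $Z_T$, to which the inductive hypothesis applies. Combined with the universal property (\ref{univ}), this forces the map of representable presheaves $\Hom(-,X^{sn})\to\Hom(-,X)$ to become an isomorphism after eh-sheafification: any section $T\to X$ lifts uniquely on any seminormal eh-cover, and any two lifts to $X^{sn}$ with the same image in $X$ already agree on any seminormal eh-cover. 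The cohomology statement (\ref{sniso}) is then formal, since $H^i(T_{eh},\F)$ only depends on the image of $T$ in the eh-topos. The one point that genuinely needs verification is the strict drop in dimension in the inductive step, namely that $Z_T$ avoids the generic points of $T^{red}$; this reduces to the elementary fact that a field is seminormal, which is immediate from the definition.
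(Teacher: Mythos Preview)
Your argument for (\ref{snet}) matches the paper's.

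For the eh-statement your route is genuinely different. The paper does not use abstract blow-up squares at all: it observes that $X^{sn}\to X$ is finite, surjective, and induces isomorphisms on all residue fields, hence is already a \emph{singleton} eh-cover by \cite{Geisser06}[Lemma~2.2], so $X^{sn,\sim}\to X^\sim$ is an epimorphism; it then asserts the map is also a monomorphism and concludes. (That last word is doing real work: $X^{sn}\to X$ is \emph{not} a monomorphism of schemes in general---for the cusp, $k[t^2,t^3]\to k[t]$ has fibre $k[t]/(t^2)$ over the origin---so the claim has to be read in the eh-topos, where it holds because the diagonal $X^{sn}\to X^{sn}\times_X X^{sn}$ is again finite surjective with isomorphic residue fields, hence again an eh-cover.) Your argument instead establishes that seminormal schemes form a dense subsite for the eh-topology and then invokes (\ref{univ}) directly; this is longer but conceptually clean and sidesteps the monomorphism subtlety entirely.

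There is one genuine gap in your inductive step. Your two descriptions of $Z$---the vanishing locus of the conductor $\mathrm{Ann}_{\co_X}(\co_{X^{sn}}/\co_X)$ and the complement of the seminormal locus---agree only for reduced $X$. If $T$ has nilpotents at a generic point $\eta$, then $\co_{T^{sn},\eta}=\kappa(\eta)$ and the map $\co_{T,\eta}\twoheadrightarrow\kappa(\eta)$ is surjective, so the conductor is the unit ideal at $\eta$ and $\eta\notin Z_T$; yet $T^{sn}\to T$ is certainly not an isomorphism near $\eta$, so your square fails to be an abstract blow-up. The fix is immediate: begin the inductive step with the eh-cover $T^{red}\to T$ (exactly as you already do in the base case), and run the conductor argument on the reduced scheme $T^{red}$, where both descriptions of $Z$ coincide and your dimension drop goes through.
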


\begin{proof} The isomorphism (\ref{snet}) follows from the fact that $X^{sn}\to X$ is a universal homeomorphism \cite{sga1}[Exp. IX, 4.10] \cite{sga4}[II, Exp. VIII, 1.1]. Since $X^{sn}\to X$ is finite surjective and induces an isomorphism on residue fields it is an eh-cover by \cite{Geisser06}[Lemma 2.2]. Since it is also a monomorphism
it becomes an isomorphism in $(\Schemes^d/\bF_p)^\sim_{eh}$.
\end{proof}

\begin{lemma} For schemes $X$, $Y$ in $\Schemes^d/\bF_p$ we have
\begin{equation} Y^\sim(X)\cong \Hom_{\Schemes^d/\bF_p}(X^{sn},Y).\notag\end{equation}
\label{ehsheaf}\end{lemma}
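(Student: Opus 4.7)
Plan. The claim is that for the eh-sheafification $Y^\sim$ of the representable presheaf on $Y$, a section over $X$ is the same as a scheme morphism $X^{sn}\to Y$. I would break the proof into two main steps, plus an injectivity argument.

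Step 1 (reduction to the seminormal case). By the preceding lemma, the finite surjective monomorphism $X^{sn}\to X$ induces an isomorphism of eh-sheaves $X^{sn,\sim}\cong X^\sim$. Applying $\Hom_{(\Schemes^d/\bF_p)^\sim_{eh}}(-,Y^\sim)$ gives
\[
Y^\sim(X)=\Hom(X^\sim,Y^\sim)=\Hom(X^{sn,\sim},Y^\sim)=Y^\sim(X^{sn}).
\]
Since $X^{sn}$ is itself seminormal with $(X^{sn})^{sn}=X^{sn}$, it suffices to prove that for every seminormal scheme $Z$ in $\Schemes^d/\bF_p$ the canonical sheafification map
\[
\Hom_{\Schemes^d/\bF_p}(Z,Y)\longrightarrow Y^\sim(Z)
\]
is a bijection, i.e.\ that the presheaf represented by $Y$ is already an eh-sheaf when restricted to seminormal test objects.

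Step 2 (the sheaf condition on seminormal schemes). The eh-topology is generated by étale covers and abstract blow-up (cdh) covers, so it is enough to verify descent of morphisms to $Y$ for these two types. For an étale cover $\{U_i\to Z\}$ this is classical, since $Y$ is a scheme and hence an étale sheaf. For an abstract blow-up square consisting of a proper surjection $p\colon Z'\to Z$, a closed subscheme $W\hookrightarrow Z$ with preimage $W'=p^{-1}(W)$, and an isomorphism $Z'\setminus W'\xrightarrow{\sim} Z\setminus W$, a matching family is a pair of morphisms $Z'\to Y$, $W\to Y$ agreeing on $W'$. I would show that the scheme-theoretic pushout-type construction $Z'\sqcup_{W'}W\to Z$ exists and is a finite surjective universal homeomorphism inducing isomorphisms on residue fields; by the universal property of the seminormalization in (\ref{univ}), and the fact that $Z^{sn}=Z$, this map admits a canonical section (or retraction) identifying $Z$ with the gluing, and hence the compatible pair descends uniquely to a morphism $Z\to Y$.

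Step 3 (injectivity). Two morphisms $f,g\colon Z\to Y$ inducing the same section of $Y^\sim$ on some eh-cover must already agree: eh-covers are jointly surjective, and a reduced (in particular seminormal) scheme is determined by its underlying points together with residue-field data, so the equality of $f$ and $g$ on a cover forces $f=g$.

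Main obstacle. The delicate point is Step 2 in the abstract blow-up case. What is really needed is that, for a seminormal base, the diagram $W\hookleftarrow W'\hookrightarrow Z'$ exhibits $Z$ as the pushout when mapping into a scheme $Y$; no extra descent data is lost. I expect this to follow by combining the fact that the resulting gluing maps finitely surjectively and residue-field-isomorphically onto $Z$ with the characterization \eqref{univ} of $Z^{sn}$ as the final seminormal scheme above $Z$. Once this pushout property is in hand, the rest of the argument is formal manipulation of the sheafification.
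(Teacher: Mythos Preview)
Your Step~1 and the spirit of Step~3 match the paper. The substantive gap is Step~2.

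First, the reduction to generating covers is not automatic: knowing that a presheaf satisfies the sheaf axiom for \'etale covers and for single abstract blow-up squares does not by itself show that the sheafification map is bijective on a seminormal $Z$, since an element of $Y^\sim(Z)$ is represented on an \emph{arbitrary} eh-cover, and such covers are iterated compositions and refinements of the two basic types. You would need either Voevodsky's cd-structure machinery (with completeness, regularity, boundedness verified for the combined \'etale/blow-up structure) or a direct refinement result.

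Second, and more seriously, your descent argument for an abstract blow-up square is flawed in two places. The matching condition for the cover $\{Z',W\}$ of $Z$ is not just agreement on $W'$: since $Z'\to Z$ is not a monomorphism, a compatible family must also satisfy $\mathrm{pr}_1^*f'=\mathrm{pr}_2^*f'$ on $Z'\times_Z Z'$, and your pushout $Z'\sqcup_{W'}W$ does not see this condition. Moreover that pushout need not exist as a scheme: Ferrand-type gluing requires $W'\to W$ to be affine, whereas here $W'\to W$ is only proper (for the blow-up of $\ba^2$ at a closed point one has $W'\cong\bp^1$ over a point). Even granting existence, the assertion that the comparison map to $Z$ is a universal homeomorphism with trivial residue extensions is exactly the hard content you are trying to prove.

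The paper sidesteps both issues. It invokes \cite{Geisser06}[Prop.~2.3] to refine any eh-cover of $X$ to one of the shape $\{U_i\to X'\xrightarrow{p} X\}$ with $\{U_i\to X'\}$ \'etale and $p$ proper such that every point of $X$ lifts to a point of $X'$ with the same residue field. \'Etale descent glues the $f_i$ to a single $f':X'\to Y$. Then, following \cite{Voevodsky96}[Lemma~3.2.7, Thm.~3.2.9], one takes the Stein factorization $X'\to X''\to X$ to descend $f'$ to $X''$ (using $p_{1,*}\co_{X'}=\co_{X''}$), and a further factorization $X''\to X'''\to X$ with $X'''\to X$ a universal homeomorphism inducing isomorphisms on all residue fields; seminormality of $X$ then forces $X'''\cong X$. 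The Stein-factorization step is precisely what handles the non-finite proper map that your pushout construction cannot.
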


\begin{proof} Since $X^{sn,\sim}\cong X^\sim$ we have $Y^\sim(X)=Y^\sim(X^{sn})$ and we can assume that $X$ is seminormal.  We follow the arguments of \cite{Voevodsky96}[3.2]. Since $X$ is reduced, any surjection $X'\to X$ (for example the disjoint union of the schemes in an eh-cover) is an epimorphism in the category of schemes. Hence the separated presheaf associated to $\Hom(-,Y)$ still has value $\Hom(X,Y)$ on seminormal $X$, and the map
\[\Hom(X,Y)\to Y^\sim(X)\]
is injective. Any element $f^\sim$ of the right hand side is represented by a family of morphisms $f_i:U_i\to Y$ on some eh-cover $\{U_i\to X\}_{i\in I}$ with $f_i\vert_{U_i\times_X U_j}=f_j\vert_{U_i\times_X U_j}$. By \cite{Geisser06}[Prop. 2.3] every eh-cover has a refinement of the form $$\{U_i\to X'\to X\}_{i\in I}$$ where $\{U_i\to X'\}_{i\in I}$ is an \'etale cover and $p:X'\to X$ is proper so that for each $x\in X$ there is $y\in p^{-1}(\{x\})$ so that $\kappa(x)\to p_*\kappa(y)$ is an isomorphism. Since $U_i\times_{X'} U_j\to {U_i\times_X U_j}$ we have
$f_i\vert_{U_i\times_{X'} U_j}=f_j\vert_{U_i\times_{X'} U_j}$, i.e. the $f_i$ glue to a morphism $f':X'\to Y$. Consider the Stein factorization $X'\xrightarrow{p_1} X''\to X$ of $p$. Since $p_{1,*}(\co_{X'})=\co_{X''}$ the proof of \cite{Voevodsky96}[Lemma 3.2.7] shows that $f'$ descends to a morphism $f'':X''\to Y$. The proof of \cite{Voevodsky96}[Thm.3.2.9]
produces a factorization $X''\to X'''\xrightarrow{p_0} X$ so that $f''$ descends to $f''':X'''\to Y$ and $p_0$ is a universal homeomorphism. Moreover, $\kappa(p_0^{-1}(x))=\kappa(x)$ for each $x\in X$ since there is a point $y\in X'$ with isomorphic residue field. Since $X$ is seminormal, $p_0$ is an isomorphism and we find that $f^\sim $ is represented by a morphism $f\in\Hom(X,Y)$.

\end{proof}

\begin{corollary} We have
\[\Hom_{(\Schemes^d/\bF_p)^\sim_{eh}}(X^\sim,Y^\sim)=Y^\sim(X)=\Hom(X^{sn},Y^{sn}),\]
i.e. the category of representable sheaves in the eh-topology is equivalent to the category of seminormal schemes.
\end{corollary}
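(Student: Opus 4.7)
The plan is to chain together three simple identifications, none of which requires any new work beyond what has already been established in this subsection.

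First, I would apply the Yoneda lemma inside the topos $(\Schemes^d/\bF_p)^\sim_{eh}$: for any sheaf $\F$ one has $\Hom(X^\sim,\F)=\F(X)$, and in particular
\[ \Hom_{(\Schemes^d/\bF_p)^\sim_{eh}}(X^\sim, Y^\sim) = Y^\sim(X). \]
This is the first equality in the stated chain.

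Next I would invoke Lemma \ref{ehsheaf} directly to identify $Y^\sim(X)=\Hom_{\Schemes^d/\bF_p}(X^{sn},Y)$. To replace $Y$ on the right by $Y^{sn}$, I would apply the universal property (\ref{univ}) of the seminormalization of $Y$: since $X^{sn}$ is itself seminormal, every morphism $X^{sn}\to Y$ factors uniquely through $Y^{sn}\to Y$. This yields
\[ \Hom(X^{sn}, Y) = \Hom(X^{sn}, Y^{sn}), \]
which completes the displayed chain of equalities.

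For the categorical assertion, I would observe that the functor $X\mapsto X^\sim$ from the category of seminormal objects in $\Schemes^d/\bF_p$ to the full subcategory of representable eh-sheaves is essentially surjective by the isomorphism $X^{sn,\sim}\cong X^\sim$ from the previous lemma, so every representable eh-sheaf is represented by a seminormal scheme. Full faithfulness is immediate from the chain above, since for seminormal $X,Y$ one has $X^{sn}=X$ and $Y^{sn}=Y$, whence $\Hom(X^\sim,Y^\sim)=\Hom(X,Y)$. There is no real obstacle in this corollary: the substantive content sits in Lemma \ref{ehsheaf}, and the corollary is essentially a formal repackaging of that result together with Yoneda and the universal property of seminormalization.
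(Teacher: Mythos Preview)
Your argument is correct and matches the paper's first approach exactly: the paper's proof says this follows from Lemma \ref{ehsheaf} together with (\ref{univ}), which is precisely your chain $Y^\sim(X)=\Hom(X^{sn},Y)=\Hom(X^{sn},Y^{sn})$, with the first equality being the standard Yoneda/sheafification fact. The paper also notes an alternative route---applying Lemma \ref{ehsheaf} to both $Y$ and $Y^{sn}$ and using $Y^{sn,\sim}\cong Y^\sim$---but your version is the one it lists first.
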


\begin{proof} This follows from Lemma \ref{ehsheaf} together with (\ref{univ}), or by applying Lemma \ref{ehsheaf} to both $Y^{sn}$ and $Y$ together with $Y^{sn,\sim}\cong Y^\sim$.
\end{proof}

Recall the adjunction $(\rho_d^*,\rho_{d,*})$ from (\ref{rhod}).

\begin{lemma} Let $\F$ be a sheaf on $(\Sm^d/\bF_p)_{et}$ representable by a scheme $Y$.  Under $R(d,\bF_p)$ we have $\rho_d^*\F = Y^\sim$ and therefore
\[ \rho_d^*\F(X)=\Hom(X^{sn},Y)\]
for any $X$.
\label{smehsheaf} \end{lemma}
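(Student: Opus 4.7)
The plan is to establish the isomorphism $\rho_d^*\F \cong Y^\sim$ as a formal consequence of the adjunction $(\rho_d^*, \rho_{d,*})$ combined with two applications of Yoneda's lemma, and then to deduce the explicit formula from Lemma \ref{ehsheaf}.

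The key observation is that $\rho_{d,*}$ is simply restriction along the inclusion of the full subcategory $(\Sm^d/\bF_p)_{et}$ in $(\Schemes^d/\bF_p)_{eh}$; this makes sense because étale covers of smooth schemes are also eh-covers. Consequently, for every eh-sheaf $\mathcal{G}$ on $\Schemes^d/\bF_p$ and every smooth $Y$ one has $(\rho_{d,*}\mathcal{G})(Y) = \mathcal{G}(Y)$.

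Given this, for any eh-sheaf $\mathcal{G}$ we compute
\begin{align*}
\Hom(\rho_d^*\F, \mathcal{G})
&\cong \Hom(\F, \rho_{d,*}\mathcal{G})
\cong (\rho_{d,*}\mathcal{G})(Y)
= \mathcal{G}(Y)
\cong \Hom(Y^\sim, \mathcal{G}),
\end{align*}
using in turn the adjunction, Yoneda for the representable sheaf $\F$ on the étale site of $\Sm^d/\bF_p$, the observation above, and Yoneda for the representable eh-sheaf $Y^\sim$ on $(\Schemes^d/\bF_p)_{eh}$. A final application of Yoneda in the category of eh-sheaves yields the natural isomorphism $\rho_d^*\F \cong Y^\sim$.

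The formula $\rho_d^*\F(X) = \Hom(X^{sn}, Y)$ then follows immediately from Lemma \ref{ehsheaf}, which is also the only point at which the hypothesis $R(d,\bF_p)$ is actually invoked. There is no serious obstacle here: the whole argument is a short formal adjunction computation, whose only conceptual input is the identification of $\rho_{d,*}$ with restriction (together with the fact that an étale sheaf represented by a smooth scheme extends, after eh-sheafification, to the sheaf represented by the same scheme on the eh-site).
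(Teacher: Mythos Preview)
Your argument is correct and takes a genuinely different route from the paper. The paper argues concretely: under $R(d,\bF_p)$ every scheme admits an eh-cover by smooth schemes, so it suffices to check $\rho_d^*\F(X)=Y^\sim(X)$ for smooth $X$; on smooth $X$ the presheaf pullback already gives $\Hom(X,Y)$, and by \cite{Geisser06}[Cor.~2.6] eh-covers of smooth schemes refine to smooth covers, so eh-sheafification does not change the value. Your approach bypasses this section-by-section comparison entirely via the adjunction $(\rho_d^*,\rho_{d,*})$ together with Yoneda on both sites, which is cleaner and, notably, does \emph{not} use $R(d,\bF_p)$ for the identification $\rho_d^*\F\cong Y^\sim$.

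One small correction to your commentary: you say $R(d,\bF_p)$ is invoked only through Lemma~\ref{ehsheaf}, but Lemma~\ref{ehsheaf} is proved in the paper without any resolution hypothesis. So in fact your argument establishes the full statement $\rho_d^*\F(X)=\Hom(X^{sn},Y)$ \emph{unconditionally}, which is slightly stronger than what the paper claims. The only implicit assumption you are using is that $Y$ lies in $\Sm^d/\bF_p$ (so that Yoneda on the smooth \'etale site applies); this holds in all the applications ($Y=\bg_m$ or $Y=\ba^1$) and is surely the intended meaning of ``representable''.
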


\begin{proof} Since every scheme has a cover by smooth schemes in the eh-topology under $R(d,\bF_p)$, it suffices to show that
$\rho_d^*\F(X) = Y^\sim(X)$ for smooth $X$. Clearly $\rho_d^p\F(X)=\Hom(X,Y)$ where $\rho_d^p$ is the presheaf pullback.
By \cite{Geisser06}[Cor.2.6] every eh-cover of a smooth scheme has a refinement by a cover consisting of smooth schemes. Hence the eh-sheafification process again leads to isomorphic groups on both sides.
\end{proof}

Next we discuss comparison results between \'etale and eh-cohomology. Consider the morphism of topoi $p=p_X:(\Schemes^d/\bF_p)^\sim_{eh}/X\to X_{et}$ and the natural transformation
\begin{equation}\alpha':\F\vert_X\to Rp_*\F\notag\end{equation}
where $\F\vert_X:=p_*\F$ denotes restriction to the small \'etale site. The functor $\F\mapsto \F\vert_X$ extends to complexes but does not preserve quasi-isomorphisms. We obtain a natural transformation
\begin{equation} \alpha:R\Gamma(X_{et},\F\vert_X)\to R\Gamma(X_{eh},\F)  \label{alphatransdef} \end{equation}
on the category of abelian sheaves on $(\Schemes^d/\bF_p)_{eh}$. Both $\alpha'$ and $\alpha$ are also contravariantly functorial in $X$.

\begin{lemma} Assume $\F$ is a torsion sheaf in $\Ab(\Schemes^d/\bF_p)^\sim_{eh}$. Then there exists a natural transformation
\begin{equation} \alpha_c:R\Gamma_c(X_{et},\F\vert_X)\to R\Gamma_c(X_{eh},\F)  \label{alphacdef} \end{equation}
which coincides with (\ref{alphatransdef}) for proper $X$ and is compatible with exact localization triangles for open/closed decompositions $U\hookrightarrow X\hookleftarrow Z$.
\end{lemma}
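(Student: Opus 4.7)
\emph{Proof plan.} Choose, by Nagata's compactification theorem, an open immersion $\bar j:X\hookrightarrow\bar X$ into a proper $\bar X\in\Schemes^d/\bF_p$ with closed complement $i:W\hookrightarrow\bar X$, and define
\[R\Gamma_c(X_{et},\F\vert_X):=\mathrm{fib}\bigl(R\Gamma(\bar X_{et},\F\vert_{\bar X})\to R\Gamma(W_{et},\F\vert_W)\bigr),\]
\[R\Gamma_c(X_{eh},\F):=\mathrm{fib}\bigl(R\Gamma(\bar X_{eh},\F)\to R\Gamma(W_{eh},\F)\bigr).\]
On the \'etale side, the localization triangle $\bar j_!\bar j^*\to\mathrm{id}\to i_*i^*$ on $\bar X_{et}$ identifies the first fiber with $R\Gamma(\bar X_{et},\bar j_!\F\vert_X)$, i.e.\ with the usual compactly supported cohomology of a torsion sheaf. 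The natural transformation $\alpha$ of (\ref{alphatransdef}), applied to the proper schemes $\bar X$ and $W$ and combined with its contravariant functoriality in the closed immersion $i$, yields a morphism of fiber sequences whose induced map on fibers is the desired $\alpha_c$. When $X$ is itself proper, take $\bar X=X$ and $W=\emptyset$, so $\alpha_c=\alpha$.

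\emph{Independence of the compactification.} Any two compactifications $\bar X_1,\bar X_2$ are dominated by a third $\bar X_3$ via proper birational morphisms $\pi_\nu:\bar X_3\to\bar X_\nu$ restricting to the identity on $X$; the non-isomorphism locus $B_\nu\subset\bar X_\nu$ of $\pi_\nu$ is then contained in $W_\nu$, and $\pi_\nu^{-1}(B_\nu)\subset W_3$. In the eh-topology, the abstract blow-up squares $(\bar X_3,B_\nu,\bar X_\nu,\pi_\nu^{-1}(B_\nu))$ and $(W_3,B_\nu,W_\nu,\pi_\nu^{-1}(B_\nu))$ are distinguished by \cite{Geisser06}[Def.~2.1]; comparing the resulting Mayer-Vietoris homotopy pullback presentations of $R\Gamma(\bar X_\nu,\F)$ and $R\Gamma(W_\nu,\F)$ with those of $R\Gamma(\bar X_3,\F)$ and $R\Gamma(W_3,\F)$---on which the $B_\nu$ and $\pi_\nu^{-1}(B_\nu)$ factors are identical---produces a canonical quasi-isomorphism between the two definitions of $R\Gamma_c(X_{eh},\F)$. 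In the \'etale case, proper base change for the torsion sheaf $\bar j_{3,!}\F\vert_X$ (which vanishes on $W_3$ and equals $\F\vert_X$ on $X$) gives $R\pi_{\nu,*}\bar j_{3,!}\F\vert_X\cong\bar j_{\nu,!}\F\vert_X$, hence $R\Gamma(\bar X_{3,et},\bar j_{3,!}\F\vert_X)\cong R\Gamma(\bar X_{\nu,et},\bar j_{\nu,!}\F\vert_X)$. Naturality of both identifications makes $\alpha_c$ itself independent of the chosen compactification.

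\emph{Compatibility with localization.} Given $U\xrightarrow{j_0}X\xleftarrow{i_0}Z$, choose a compactification $\bar X$ of $X$ with complement $W$ and let $\bar Z\subset\bar X$ be the scheme-theoretic closure of $Z$; then $\bar Z\cap X=Z$ and $\bar X\setminus U=\bar Z\cup W$, so $\bar X$ also compactifies $U$. The closed cover $\bar Z\cup W$ with intersection $\bar Z\cap W$ is a distinguished square in both topologies, giving
\[R\Gamma(\bar Z\cup W,\F)\simeq R\Gamma(\bar Z,\F)\times_{R\Gamma(\bar Z\cap W,\F)}R\Gamma(W,\F).\]
Writing $f_1:R\Gamma(\bar X,\F)\to R\Gamma(\bar Z\cup W,\F)$ and $f_2:R\Gamma(\bar Z\cup W,\F)\to R\Gamma(W,\F)$, one has $\mathrm{fib}(f_1)=R\Gamma_c(U)$, $\mathrm{fib}(f_2\circ f_1)=R\Gamma_c(X)$, and, via the above homotopy pullback, $\mathrm{fib}(f_2)=R\Gamma_c(Z)$; the fiber-of-composition triangle $\mathrm{fib}(f_1)\to\mathrm{fib}(f_2\circ f_1)\to\mathrm{fib}(f_2)$ then produces the localization sequence in each topology, and naturality of $\alpha$ in all entries forces $\alpha_c$ to be compatible with it. The principal obstacle is independence of the compactification, which requires substantively different arguments in the two topologies: Mayer-Vietoris for abstract blow-ups (built into the eh-topology) on the one side, and proper base change---where the torsion hypothesis on $\F$ is essential---on the \'etale side.
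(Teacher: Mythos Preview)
Your approach is essentially the same as the paper's: compactify, take the map of fibre sequences induced by $\alpha$ on $\bar X$ and on the closed complement, and invoke independence of the compactification plus the octahedral axiom for localization. Your treatment of independence and of the localization triangle is more explicit than the paper's (which simply cites \cite{Geisser06}[Lemma~3.4] on the eh side and the torsion hypothesis on the \'etale side, then appeals to the octahedral axiom), but the substance is identical.

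There is, however, one genuine slip. On the \'etale side you define
\[
R\Gamma_c(X_{et},\F\vert_X)=\mathrm{fib}\bigl(R\Gamma(\bar X_{et},\F\vert_{\bar X})\to R\Gamma(W_{et},\F\vert_W)\bigr)
\]
and then assert that the localization triangle $\bar j_!\bar j^*\to\mathrm{id}\to i_*i^*$ identifies this fibre with $R\Gamma(\bar X_{et},\bar j_!\F\vert_X)$. That identification would require $\F\vert_W=i^*(\F\vert_{\bar X})$, but in the paper's notation $\F\vert_W:=p_{W,*}\F$ is the restriction of the big eh-sheaf to the small \'etale site of $W$, and the base-change map $i^*(\F\vert_{\bar X})\to\F\vert_W$ is in general \emph{not} an isomorphism (take $\F=\bg_m^\sim$: the stalk of $\F\vert_{\bar X}$ at a closed point is the units of a strictly henselian local ring, whereas $\F\vert_W$ evaluated at that point is the units of the residue field). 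So your fibre is not the usual compactly supported \'etale cohomology, and your subsequent appeal to proper base change for $\bar j_{3,!}\F\vert_X$ does not apply to it.

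The fix is exactly what the paper does: on the \'etale side use $i^*(\F\vert_{\bar X})$ rather than $\F\vert_W$. The commutative square
\[
\begin{CD}
\F\vert_{\bar X} @>>> i_*i^*(\F\vert_{\bar X})\\
@VVV @VVV\\
Rp_{\bar X,*}\F @>>> i_*Rp_{W,*}\F
\end{CD}
\]
still exists because the right vertical map factors as $i^*(\F\vert_{\bar X})\to\F\vert_W\xrightarrow{\alpha'_W}Rp_{W,*}\F$; this is what the paper means by ``functoriality of $\alpha'$ for the morphism $i$''. Taking global sections and fibres then produces $\alpha_c$ with source the genuine $R\Gamma(\bar X_{et},\bar j_!\F\vert_X)$, and your independence and localization arguments go through unchanged once this correction is made.
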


\begin{proof} Let $j:X\to\bar{X}$ be an open embedding into a proper $\bF_p$-scheme $\bar{X}$ with closed complement $i:Z\hookrightarrow\bar{X}$. Choose an injective resolution $\F\to I^\bullet$ in $\Ab(\Schemes^d/\bF_p)^\sim_{eh}$ and injective resolutions $\F\vert_{\bar{X}}\to J^\bullet_{\bar{X}}$, resp. $i^*(\F\vert_{\bar{X}})\to J^\bullet_Z$, in $\bar{X}_{et}$, resp. $Z_{et}$.
Then the commutative diagram on $\bar{X}_{et}$
\[\begin{CD} \F\vert_{\bar{X}} @>>> i_*i^*(\F\vert_{\bar{X}})\\
@VVV @VVV\\
Rp_{\bar{X},*}\F @>>> i_*Rp_{Z,*}\F
\end{CD}\]
which arises from functoriality of $\alpha'$ for the morphism $i$, is realized by a diagram of maps of complexes of injectives
\[\begin{CD} J^\bullet_{\bar{X}} @>>> i_*J^\bullet_Z\\
@VVV @VVV\\
I^\bullet\vert_{\bar{X}} @>>> i_*(I^\bullet\vert_Z)
\end{CD}\]
with all maps unique up to homotopy and commuting up to homotopy. Note here that $p_*$ and $i_*$ preserve injective objects. Taking global sections on $\bar{X}_{et}$ and taking mapping fibres of the horizontal maps gives $\alpha_c$ unique up to homotopy. For $R\Gamma_c(X_{eh},\F)$ this is the definition of \cite{Geisser06}[Def. 3.3] and for $R\Gamma_c(X_{et},\F\vert_X)$ this is the usual definition of compact support cohomology on the small \'etale site since $\F\vert_X=j^*\F\vert_{\bar{X}}$. As $\F\vert_X$ is torsion this definition is independent of the choice of $j$, and so is $R\Gamma_c(X_{eh},\F)$ by \cite{Geisser06}[Lemma3.4]. The functoriality for open/closed decompositions follows by applications of the octahedral axiom.
\end{proof}

\begin{corollary} Let $\F \in \Ab(\Schemes^d/\bF_p)^\sim_{eh}$ be a torsion sheaf such that
\begin{equation} \alpha:R\Gamma(X_{et},\F\vert_X)\cong R\Gamma(X_{eh},\F) \notag\end{equation}
is an isomorphism for smooth, proper $X$. If one assumes $R(d,\bF_p)$ then
\begin{equation} \alpha_c:R\Gamma_c(X_{et},\F\vert_X)\to R\Gamma_c(X_{eh},\F)  \label{alphacdef1} \end{equation}
is an isomorphism for all $X$.
\label{alphaccor}\end{corollary}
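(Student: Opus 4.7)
The plan is to prove the isomorphism by induction on $\dim X\leq d$, using the localization triangles provided by the lemma as the engine of the induction and the hypothesis on smooth proper $X$ as the base input. The case $X=\emptyset$ is trivial. For the inductive step, I would first replace $X$ by its reduction $X^{\text{red}}$: the inclusion $X^{\text{red}}\hookrightarrow X$ is a universal homeomorphism, hence induces equivalences both of small \'etale topoi and of representable eh-sheaves, so it suffices to handle reduced $X$.

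For reduced $X$ of dimension $d'$, the perfectness of $\bF_p$ gives a dense open smooth subscheme $U\subset X$ whose closed complement $Z=X\setminus U$ has $\dim Z<d'$ (since $Z$ meets each top-dimensional component of $X$ in a proper closed subset). Apply the localization triangle of the lemma to $U\hookrightarrow X\hookleftarrow Z$ in both topologies to obtain a map of distinguished triangles
\[
\begin{CD}
R\Gamma_c(U_{et},\F|_U) @>>> R\Gamma_c(X_{et},\F|_X) @>>> R\Gamma_c(Z_{et},\F|_Z)\\
@VV{\alpha_c}V @VV{\alpha_c}V @VV{\alpha_c}V\\
R\Gamma_c(U_{eh},\F) @>>> R\Gamma_c(X_{eh},\F) @>>> R\Gamma_c(Z_{eh},\F).
\end{CD}
\]
By induction on dimension the right vertical $\alpha_c$ for $Z$ is an isomorphism, so by the five lemma it suffices to prove the claim for the smooth $U$.

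To handle smooth $U$, I would use Nagata compactification to embed $U$ into a proper $\bF_p$-scheme $\bar U_0$ with $U$ dense open, and then invoke $R(d,\bF_p)$ to produce a resolution $\bar U\to \bar U_0$ that is an isomorphism over the smooth locus of $\bar U_0$; since $U$ is smooth and open in $\bar U_0$, this contains $U$, so $U$ embeds as a dense open in the smooth proper scheme $\bar U$, with closed complement $W$ of dimension strictly less than $\dim U\leq d'$. By the hypothesis of the corollary, $\alpha=\alpha_c$ is an isomorphism for the smooth proper $\bar U$; by the inductive hypothesis it is an isomorphism for $W$; hence by another application of the localization triangle and the five lemma it is an isomorphism for $U$, completing the induction.

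The main conceptual obstacle is ensuring the existence of the smooth compactification $\bar U\supset U$ with low-dimensional boundary; this is exactly what $R(d,\bF_p)$ provides, and it is the only place resolution of singularities enters. A minor technical point that must be verified is that the localization triangles in the two topologies are compatible with $\alpha_c$ in the sense required for the five-lemma argument, but this is precisely the compatibility asserted in the preceding lemma.
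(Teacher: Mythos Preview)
Your proof is correct and is precisely the ``standard induction over the dimension of $X$'' that the paper invokes by citing \cite{Geisser06}[Lemma 2.7]; the paper gives no further details, and you have spelled out exactly the argument intended. The only remark worth making is that your reduction to $X^{\text{red}}$ could equally well be replaced by a reduction to $X^{sn}$ using the lemmas already proved in the paper, but this makes no difference to the argument.
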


\begin{proof} The proof is a standard induction over the dimension of $X$, see \cite{Geisser06}[Lemma 2.7].
\end{proof}

We remark that
\begin{equation} \alpha:R\Gamma(X_{et},\F\vert_X)\cong R\Gamma(X_{eh},\F)   \label{comp1} \end{equation}
is an isomorphism for all $X$ if $\F\vert_{\Ab(\Schemes^d/\bF_p)^\sim_{et}}$ is constructible, without further assumptions, by \cite{Geisser06}[Thm. 3.6]. The transformation $\alpha_c$ is then also an isomorphism for all $X$ if $\F$ is constructible. However, constructibility is a much stronger assumption than being torsion and will not hold for the $p$-primary torsion sheaves of interest below.

For non-torsion sheaves, even if \'etale and eh cohomology agree on smooth schemes $X$, one cannot expect an isomorphism for general, even normal schemes, as the example \cite{Geisser06}[Prop.8.2] shows. The following Lemma (which is not needed in the remainder of the paper)  allows to prove such an identification between \'etale and eh cohomology in some very restricted circumstances.

\begin{lemma} Let $\F \in \Ab(\Schemes^d/\bF_p)^\sim_{eh}$ be such that
\begin{equation} \alpha:R\Gamma(X_{et},\F\vert_X)\cong R\Gamma(X_{eh},\F)  \notag \end{equation}
is an isomorphism for smooth $X$. Let
\begin{equation}\begin{CD}  Z' @>i'>> X'\\
@VV f' V @VV f V\\
Z @> i >> X
\end{CD}\label{abstractblowup}\end{equation}
be an abstract blowup square \cite{Geisser06}[Def. 2.1] with $f$ finite, and $Z$, $Z'$ and $X'$ smooth. Assume
$f_*\F\vert_{X'}\oplus i_*\F\vert_Z\to g_*\F\vert_{Z'}$
is surjective where $g=if'$.
Then
\[  \alpha:R\Gamma(X_{et},\F\vert_X)\cong R\Gamma(X_{eh},\F)\]
is an isomorphism.
\label{blowup}\end{lemma}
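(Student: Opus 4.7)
The plan is to produce Mayer--Vietoris distinguished triangles associated to \eqref{abstractblowup} in both \'etale and eh cohomology, and then conclude by the 5-lemma. On the eh side, the descent property for abstract blow-up squares in the eh topology yields for any eh-sheaf a distinguished triangle
$$R\Gamma(X_{eh},\F)\to R\Gamma(X'_{eh},\F)\oplus R\Gamma(Z_{eh},\F)\to R\Gamma(Z'_{eh},\F)$$
(this is a standard feature of the cd-structure generated by \'etale covers and abstract blow-up squares, cf.\ \cite{Geisser06}). On the \'etale side, I would first establish the short exact sequence of sheaves on $X_{et}$
$$0\to \F|_X\to f_*\F|_{X'}\oplus i_*\F|_Z\to g_*\F|_{Z'}\to 0.$$
For any \'etale $U\to X$, the base-change of \eqref{abstractblowup} by $U\to X$ is again an abstract blow-up square, and the cd-descent sheaf condition applied to this square gives exactness of
$$0\to \F(U)\to \F(U\times_X X')\oplus \F(U\times_X Z)\to \F(U\times_X Z'),$$
which directly yields left-exactness of the proposed sheaf sequence. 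Surjectivity on the right is precisely the hypothesis of the lemma.

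Since $f$ is finite and $g=if'$ is the composition of the closed immersion $i$ with the finite morphism $f'$ (the base change of $f$), both $f$ and $g$ are finite, so the \'etale pushforwards $f_*$, $i_*$, $g_*$ are exact and coincide with their derived functors. Consequently the short exact sequence above induces a distinguished triangle
$$R\Gamma(X_{et},\F|_X)\to R\Gamma(X'_{et},\F|_{X'})\oplus R\Gamma(Z_{et},\F|_Z)\to R\Gamma(Z'_{et},\F|_{Z'})$$
in the derived category of abelian groups.

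The natural transformation $\alpha$ is contravariantly functorial in $X$ and hence induces a morphism of distinguished triangles from the \'etale one to the eh one. Since $X'$, $Z$ and $Z'$ are smooth, the hypothesis of the lemma implies that $\alpha_{X'}$, $\alpha_Z$ and $\alpha_{Z'}$ are isomorphisms, so the 5-lemma in the triangulated category forces $\alpha_X$ to be an isomorphism as well. The principal technical point is the cd-descent step: one must verify that the sheaf condition for the two-element blow-up cover $\{X'\to X,Z\to X\}$ reduces to the displayed 3-term left-exact sequence, rather than the full \v{C}ech complex involving $X'\times_X X'$. This is a standard feature of cd-topologies and is available within the framework of \cite{Geisser06}.
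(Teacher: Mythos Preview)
Your proof is correct and follows essentially the same approach as the paper: both establish the short exact sequence $0\to \F|_X\to f_*\F|_{X'}\oplus i_*\F|_Z\to g_*\F|_{Z'}\to 0$ on $X_{et}$ (left-exactness from the eh-sheaf condition on pulled-back blow-up squares, right-exactness by hypothesis), invoke finiteness of $f,i,g$ to identify the resulting triangle with the desired \'etale Mayer--Vietoris triangle, pair it with the eh blow-up triangle from \cite{Geisser06}[Prop.~3.2], and conclude via the Five Lemma. Your remark about the reduction of the \v{C}ech condition to the three-term sequence is a point the paper passes over silently, but it is indeed a standard feature of cd-topologies and not a gap.
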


\begin{proof} The functoriality of $p$ in $X$ and diagram (\ref{abstractblowup}) induce a commutative diagram of exact triangles
\[\minCDarrowwidth1em\begin{CD} R\Gamma(X_{et},\F\vert_X) @>>> R\Gamma(X'_{et},\F\vert_{X'})\oplus R\Gamma(Z_{et},\F\vert_{Z}) @>>> R\Gamma(Z'_{et},\F\vert_{Z'}) @>>>{}\\
@VV\alpha_1V @VV\alpha_2V @VV\alpha_3V @.\\
R\Gamma(X_{eh},\F) @>>> R\Gamma(X'_{eh},\F)\oplus R\Gamma(Z_{eh},\F) @>>> R\Gamma(Z'_{eh},\F) @>>>{}\\
\end{CD}\]
where the bottom row is exact by \cite{Geisser06}[Prop. 3.2] and the top row is induced by the exact sequence of sheaves on $X_{et}$
\begin{equation} 0\to \F\vert_X\to f_*\F\vert_{X'}\oplus i_*\F\vert_Z\to g_*\F\vert_{Z'}\to 0.   \notag\end{equation}
Here exactness at the last term was assumed and exactness at the remaining terms follows from the fact that $\F$ is an eh sheaf, and the pullback of (\ref{abstractblowup}) to any \'etale $U\to X$ is again an abstract blowup square. We also use that $f,i,g$ are finite so that $R\Gamma(X_{et},f_*\F\vert_{X'})=R\Gamma(X'_{et},\F\vert_{X'})$ etc. Since $Z',Z,X'$ are smooth the maps $\alpha_2$, $\alpha_3$ are quasi-isomorphisms, and hence so is $\alpha_1$ by the Five Lemma.
\end{proof}

\begin{corollary} Let $X$ be a seminormal curve over $\bF_p$. Then we have isomorphisms
\[  R\Gamma(X_{et},\bg_m)\cong R\Gamma(X_{eh},\rho_d^*\bg_m) \]
and
\[  R\Gamma(X_{et},\co)\cong R\Gamma(X_{eh},\rho_d^*\co). \]
\label{sncurves}\end{corollary}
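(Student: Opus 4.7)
The plan is to apply Lemma~\ref{blowup} separately with $\F=\rho_d^*\bg_m$ and $\F=\rho_d^*\co$, using the abstract blowup square arising from the normalization of $X$. Since $X$ is a reduced curve of finite type over $\bF_p$ it is Nagata, so the normalization $f\colon X':=X^n\to X$ is finite; moreover $X'$ is normal, hence smooth since $\bF_p$ is perfect. Take $i\colon Z\hookrightarrow X$ to be the reduced singular locus of $X$, a finite disjoint union of closed points, and $Z':=Z\times_X X'$ its pullback. Then $Z$ and $Z'$ are reduced $0$-dimensional $\bF_p$-schemes, hence smooth, and the resulting diagram is an abstract blowup square since $f$ restricts to an isomorphism over $X\setminus Z$.

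The hypothesis of Lemma~\ref{blowup} on smooth schemes holds for $\F=\rho_d^*\bg_m$ by \cite{Geisser06}[Thm.~4.3] with $n=1$ (using $\bz(1)^{SV}\cong\bg_m[-1]$), and for $\F=\rho_d^*\co$ by \cite{Geisser06}[Thm.~4.7] with $i=0$, together with the standard vanishing of higher cohomology of coherent sheaves on smooth curves. The remaining hypothesis, surjectivity of the map $f_*\F|_{X'}\oplus i_*\F|_Z\to g_*\F|_{Z'}$, is checked on \'etale stalks. At a smooth point $f$ is locally an isomorphism, so the claim is trivial. At a singular point $x\in Z$, the seminormality of $X$ is used through the crucial fact that the preimages $y_1,\dots,y_r\in X'$ of $x$ all satisfy $\kappa(y_j)=\kappa(x)$; the stalk map for $\bg_m$ at a geometric point $\bar x$ over $x$ then reads
\[ \prod_{j=1}^r (\co_{X',y_j}^{sh})^\times\oplus\kappa(\bar x)^\times\longrightarrow \prod_{j=1}^r\kappa(\bar x)^\times,\qquad (u_1,\dots,u_r,v)\mapsto (u_j(y_j)v^{-1})_j, \]
which is surjective because the reduction map sends a strictly henselian local ring onto the units of its residue field; the analogous map for $\co$ is trivially surjective.

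Finally, since \'etale morphisms preserve seminormality, every \'etale $U\to X$ satisfies $U^{sn}=U$, and Lemma~\ref{smehsheaf} then identifies $(\rho_d^*\bg_m)|_X=\bg_m$ and $(\rho_d^*\co)|_X=\co$ on $X_{et}$. Lemma~\ref{blowup} therefore produces exactly the two isomorphisms of the corollary. The principal technical point is the stalk-level surjectivity at singular points, where the seminormality hypothesis enters indispensably through the matching of residue fields along the normalization.
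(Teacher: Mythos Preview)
Your approach matches the paper's: apply Lemma~\ref{blowup} to the abstract blowup square built from the normalization. However, your claim that seminormality of $X$ forces $\kappa(y_j)=\kappa(x)$ for the preimages $y_j$ of a singular point $x$ is false. A counterexample is the subring $A=\bF_p+t\,\bF_{p^2}[t]\subset B=\bF_{p^2}[t]$: one checks via Davis's criterion that $A$ is seminormal, yet the unique point of $\Spec B$ above the origin has residue field $\bF_{p^2}\neq\bF_p$.

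Fortunately this claim is unnecessary. The paper proves surjectivity of $f_*\F|_{X'}\oplus i_*\F|_Z\to g_*\F|_{Z'}$ more directly: since $i'\colon Z'\hookrightarrow X'$ is a closed immersion, the map $\bg_m\to i'_*\bg_m$ is already surjective on $X'_{et}$, and pushing forward by the finite (hence exact) morphism $f$ preserves this. Your stalk computation can be repaired by indexing over all geometric points of $X'$ above $\bar x$ rather than over the points $y_j$, but the paper's route is cleaner. Seminormality of $X$ genuinely enters only where you identify it at the end---to obtain $(\rho_d^*\bg_m)|_X=\bg_m$ via Lemmas~\ref{smehsheaf} and~\ref{ehsheaf}---and implicitly to guarantee that $Z'$ is reduced (equivalently $\fm_x\,\co_{X^n}$ equals the Jacobson radical at each singular $x$), a point which both you and the paper assert without proof.
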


\begin{proof} Since $R(1,\bF_p)$ holds we have $\rho_d^*\bg_m=\bg_m^\sim$ by Lemma \ref{smehsheaf} and since $X$ is seminormal we have $\bg_m^\sim\vert_X=\bg_m$ by Lemma \ref{ehsheaf}. Consider (\ref{abstractblowup}) with $X'=X^n$, the normalization, and $Z=X^{sing}$, the singular locus. Both $Z$ and $Z'$ are finite unions of closed points, hence smooth, and $X^n$ is smooth since $X$ is a curve. The map $\bg_m\to i'_*\bg_m$ is surjective since $i'$ is a closed embedding, and applying $f_*$ gives a surjection since $f_*$ is exact. So all conditions of
Lemma \ref{blowup} are satisfied. The proof for $\co$ is identical, since $\co$ is representable by $\ba^1$.
\end{proof}

\begin{corollary} Let $X$ be an arbitrary curve over $\bF_p$. Then we have isomorphisms
\[  R\Gamma(X_{eh},\rho_d^*\bg_m)\cong
%R\Gamma(X^{sn}_{eh},\rho_d^*\bg_m)\cong
R\Gamma(X^{sn}_{et},\bg_m) \]
and
\[  R\Gamma(X_{eh},\rho_d^*\co)\cong
%R\Gamma(X^{sn}_{eh},\rho_d^*\co)\cong
R\Gamma(X^{sn}_{et},\co). \]
\label{anycurve}\end{corollary}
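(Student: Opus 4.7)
The plan is to combine two results already established in the excerpt: the seminormalization invariance of eh-cohomology, and Corollary \ref{sncurves} treating the seminormal case. The proof should be essentially immediate, with no genuine obstacle.

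First, I would invoke the earlier lemma showing that for $X\in\Schemes^d/\bF_p$ the canonical map $X^{sn}\to X$ induces an isomorphism $X^{sn,\sim}\cong X^\sim$ in $(\Schemes^d/\bF_p)^\sim_{eh}$, and hence a natural isomorphism $R\Gamma(X_{eh},\F)\cong R\Gamma(X^{sn}_{eh},\F)$ for every abelian eh-sheaf $\F$. Applying this to the two eh-sheaves $\F=\rho_d^*\bg_m$ and $\F=\rho_d^*\co$ reduces the claim to the seminormal case.

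Second, I need that $X^{sn}$ is again a (seminormal) curve. Because $X$ is of finite type over $\bF_p$ it is Nagata, so the normalization $X^n\to X$ is finite; since $X^n\to X^{sn}\to X$ factors through the seminormalization, the map $X^{sn}\to X$ is also finite, hence $X^{sn}$ lies in $\Schemes^1/\bF_p$. Thus Corollary \ref{sncurves} applied to $X^{sn}$ yields
\[
R\Gamma(X^{sn}_{et},\bg_m)\cong R\Gamma(X^{sn}_{eh},\rho_d^*\bg_m),\qquad
R\Gamma(X^{sn}_{et},\co)\cong R\Gamma(X^{sn}_{eh},\rho_d^*\co).
\]

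Splicing these two chains of isomorphisms together gives the asserted comparisons
\[
R\Gamma(X_{eh},\rho_d^*\bg_m)\cong R\Gamma(X^{sn}_{et},\bg_m),\qquad
R\Gamma(X_{eh},\rho_d^*\co)\cong R\Gamma(X^{sn}_{et},\co),
\]
completing the proof. The only point worth a second thought is confirming that $X^{sn}$ retains the relevant hypotheses (one-dimensional, finite type over $\bF_p$, seminormal), which was already addressed in the discussion preceding the definition of seminormalization, so there is no serious difficulty to overcome.
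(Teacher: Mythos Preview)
Your proposal is correct and follows exactly the paper's approach: the paper's proof reads simply ``Combine (\ref{sniso}) and Corollary \ref{sncurves}.'' Your additional remark that $X^{sn}$ remains a curve in $\Schemes^1/\bF_p$ is a reasonable elaboration, already implicit in the earlier discussion of the seminormalization being finite over $X$.
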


\begin{proof} Combine (\ref{sniso}) and Corollary \ref{sncurves}.\end{proof}

We finally come back to the proof of Prop. \ref{cgleicheins}. For $X$ in $\Schemes^d/\bF_p$ we have a natural map
\begin{equation} R\Gamma(X_{et},\bg_m)\to R\Gamma(X_{et},\bg_m^\sim\vert_X) \xrightarrow{\alpha} R\Gamma(X_{eh},\rho_d^*\bg_m)\label{gmres}\end{equation}
by Lemma \ref{smehsheaf}. Consider the induced map on $p$-adic completions and denote by $C$  the [-1]-shift of its mapping cone, so that there is an exact triangle
\[ R\Gamma(X_{et},\bz_p(1))\to R\Gamma(X_{eh},\bz_p(1))\to C.\]
Similarly, we have  an exact triangle
\begin{equation} R\Gamma(X_{et},\co)[-1]\to R\Gamma(X_{eh},\rho_d^*\co)[-1]\to C^{add}.\label{addsequence}\end{equation}
For a bounded complex $K$ with finite cohomology we set
\[\chi(K):=\sum_j(-1)^j\ord_p\#H^j(K).\]
For  example, if $X$ is proper, the terms in (\ref{addsequence}) are perfect complexes of $\bF_p$-vector spaces by \cite{Geisser06}[Cor. 4.8] and we have
\[ \chi(C^{add})=\chi_{et}(X,\mathcal{O})-\chi(X,\mathcal{O})\]
where $\chi(X,\mathcal{O})$, resp. $\chi_{et}(X,\mathcal{O})$, is defined as in (\ref{oeulerchar}), resp. (\ref{oeulercharet}), with $\X_{\bF_p}$ replaced by $X$.

\begin{prop} Let $X\to\Spec(\bF_p)$ be proper of dimension $d$ and assume $R(d,\bF_p)$. Then $C$ is a bounded complex with finite cohomology and
\begin{equation}\chi(C)=\chi(C^{add}).\label{cadd}\end{equation}
In particular for a proper arithmetic scheme $\X$ we have an isomorphism
\[ R\Gamma(\X_{\bF_p,et},\bq_p(1))\cong R\Gamma(\X_{\bF_p,eh},\bq_p(1))\]
so that
\begin{equation}\mathrm{det}_{\mathbb{Z}_p}  R\Gamma(\mathcal{X}_{\mathbb{F}_p,et},\mathbb{Z}_p(1))=
p^{\chi_{et}(\mathcal{X}_{\mathbb{F}_p},\mathcal{O})-\chi(\mathcal{X}_{\mathbb{F}_p},\mathcal{O})}\cdot \mathrm{det}_{\mathbb{Z}_p}  R\Gamma(\mathcal{X}_{\mathbb{F}_p,eh},\mathbb{Z}_p(1))
\notag\end{equation}
inside $\mathrm{det}_{\mathbb{Q}_p}  R\Gamma(\mathcal{X}_{\mathbb{F}_p,eh},\mathbb{Q}_p(1))$.
\label{strangeprop}\end{prop}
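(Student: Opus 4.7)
My plan is to prove both claims simultaneously by induction on $d$, using the resolution of singularities hypothesis $R(d,\bF_p)$ together with the same local filtration principle that underlies Lemma~\ref{oetale}: for any ideal $I$ in a local (or formally adic) ring, the canonical bijection $1+x\leftrightarrow x$ gives an isomorphism $(1+I^n)/(1+I^{n+1})\cong I^n/I^{n+1}$ for $n\geq 1$, which is precisely what equates the $p$-adic Euler characteristic of a ``multiplicative'' conductor cokernel with that of its ``additive'' analogue. This is the mechanism that will match $\chi(C)$ and $\chi(C^{add})$ without there being a literal quasi-isomorphism $C\cong C^{add}$.

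In the base case $d=0$, $X$ is a finite disjoint union of spectra of Artinian local finite type $\bF_p$-algebras $A_i$, and $X^{sn}$ is the disjoint union of their residue fields $\kappa_i$, which is smooth. By (\ref{sniso}) and Geisser's comparison for smooth schemes (\cite{Geisser06}[Thm.~4.3, Thm.~4.7]), $C$ and $C^{add}$ reduce to the kernels $\prod(1+\mathfrak{m}_i)$ and $\prod\mathfrak{m}_i$ concentrated in one degree, which are finite abelian groups of equal $p$-order by the filtration principle.

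For the inductive step I would proceed in two stages. Stage~1 reduces to $X$ seminormal: factor the map through $R\Gamma(X_{et},\bg_m^\sim|_X)$, where $\bg_m^\sim|_X(U)=\bg_m(U^{sn})$ by Lemma~\ref{ehsheaf} and Lemma~\ref{smehsheaf}. The cone of $\bg_m^X\to\bg_m^\sim|_X$ is supported on the non-seminormal locus (of dimension $<d$) and has stalks $(A^{sn})^\times/A^\times$; the filtration principle matches its cohomological $\chi$ with the additive analogue $A^{sn}/A$. Using (\ref{sniso}) to identify eh cohomologies of $X$ and $X^{sn}$, this reduces the problem to $X$ seminormal (of the same dimension $d$). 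In Stage~2, apply $R(d,\bF_p)$ to obtain an abstract blowup square (\ref{abstractblowup}) with $X'$ smooth and $\dim Z,\,\dim Z'<d$. The surjectivity hypothesis of Lemma~\ref{blowup} holds for $\bg_m$ (and for $\co$), since $f$ is finite and units in the stalks of $\bg_m^{Z'}$ lift to units in $\bg_m^{X'}$. Combining Mayer--Vietoris in eh cohomology (\cite{Geisser06}[Prop.~3.2]) with the resulting short exact sequence in étale cohomology yields a comparison triangle that expresses $C^X$ via $C^{X'}$ (trivial by Geisser's theorem for smooth $X'$), $C^Z$ and $C^{Z'}$ (both controlled by the inductive hypothesis), and similarly for $C^{add}$.

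The main obstacle is Stage~1: verifying that the multiplicative cokernel $\bg_m^\sim|_X/\bg_m^X$ and its additive counterpart $(\co^{sn})/\co^X$ have cohomologies of matching $p$-adic Euler characteristic. This amounts to a local computation on strict henselizations, exhibiting compatible conductor-type filtrations whose graded pieces are identified via $1+x\leftrightarrow x$, followed by global assembly. A secondary technical point is ensuring the finiteness of $H^*(C)$, i.e.\ that the map becomes an isomorphism after $\otimes\bq_p$; this comes out of the same inductive decomposition, since every correction term appearing in the triangles is finite. Once both stages are in place, the displayed determinant identity and the isomorphism after $\otimes\bq_p$ follow by assembling the individual contributions, with the cumulative correction $p^{\chi_{et}(X,\mathcal{O})-\chi(X,\mathcal{O})}$ coming from the matched pair $(\chi(C),\chi(C^{add}))$.
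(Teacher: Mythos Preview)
Your Stage~2 contains a genuine error: you assert that the map $f:X'\to X$ coming from resolution of singularities is finite, and invoke the surjectivity hypothesis of Lemma~\ref{blowup} on that basis. Resolution only gives a proper birational $f$, which is almost never finite in dimension $\geq 2$ (think of blowing up a point on a surface). Without finiteness, $Rf_*\bg_m\neq f_*\bg_m$ in general, so you do not get a short exact sequence on $X_{et}$ and hence no \'etale Mayer--Vietoris triangle to compare with the eh one. The argument of Lemma~\ref{blowup} simply does not apply.

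More importantly, you are missing the key simplification that makes Stage~2 unnecessary. Since $\bg_m/p^\nu$ is torsion and \'etale and eh cohomology agree on smooth proper schemes, Corollary~\ref{alphaccor} (which is where $R(d,\bF_p)$ is actually used, via the induction of \cite{Geisser06}[Lemma~2.7] on compact support cohomology) shows that the second map $\alpha$ in the factorization (\ref{gmres}) becomes an isomorphism after $p$-adic completion for any proper $X$. The same holds for $\co$, which is already $p$-torsion. Hence $C$ is the mapping fibre of $R\Gamma(X_{et},\bz_p(1))\to R\Gamma(X^{sn}_{et},\bz_p(1))$ and $C^{add}$ that of $R\Gamma(X_{et},\co)\to R\Gamma(X^{sn}_{et},\co)$, so the entire problem collapses to your Stage~1. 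The paper then handles Stage~1 not by a single local filtration but by first splitting off the nilradical $J$ (where the graded isomorphism $J^k/J^{k+1}\cong(1+J^k)/(1+J^{k+1})$ applies directly) and then proving the matching for the reduced-to-seminormal step by induction on the dimension of the singular locus $Z\subset X$: one filters by powers of the conductor ideal $\cm'=\cm\co_{X'}$, matches graded pieces via $x\mapsto 1+x$, and applies the inductive hypothesis to the universal homeomorphism $Z'\to Z$. Your phrase ``local computation on strict henselizations'' does not capture this; the quotient $A^{sn}/A$ is not nilpotent, so the filtration principle alone is insufficient without the inductive structure.
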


\begin{proof} First recall that $\bg_m^\sim\cong\rho_d^*\bg_m$ by Lemma \ref{smehsheaf}. Since $\bg_m^\sim\vert_Z=\bg_m$ for smooth $Z$, the assumption of Cor. \ref{alphaccor} is satisfied for $\bg_m/p^\nu$ by \cite{Geisser06}[Thm. 4.3], and we deduce that the $p$-adic completion of $\alpha$ in (\ref{gmres}) is an isomorphism for proper $X$. Moreover, by Lemma \ref{ehsheaf} we have $\bg_m^\sim\vert_X=\sigma_*\bg_m$ where $\sigma:X^{sn}\to X$ is the seminormalization. So we obtain an exact triangle
\[ R\Gamma(X_{et},\bz_p(1))\to R\Gamma(X^{sn}_{et},\bz_p(1))\to C.\]
Since $\co$ is a $p$-torsion sheaf an analogous argument gives an exact triangle
\[ R\Gamma(X_{et},\co)[-1]\to R\Gamma(X^{sn}_{et},\co)[-1]\to C^{add}.\]
We have an exact sequence of coherent sheaves on $X_{et}\cong X^{sn}_{et}$
\[ 0\to J\to \co_X\to \sigma_*\co_{X^{sn}}\to K \to 0\]
where $J$ is the nilradical, as $X^{sn}$ is reduced. There is an analogous sequence of abelian sheaves
\[ 1\to 1+J\to \bg_{m,X}\to \sigma_*\bg_{m,X^{sn}}\to K^{mult}\to 1.\]
Both $J$ and $1+J$ have finite filtrations $J^k$, resp. $1+J^k$, with isomorphic subquotients
\begin{equation}J^{k}/J^{k+1}\cong (1+J^{k})/(1+J^{k+1});\quad\quad x\mapsto 1+x, \notag\end{equation}
hence
\begin{equation}\chi(R\Gamma(X_{et},J))=\chi(R\Gamma(X_{et},1+J)).\label{nil}\end{equation}
To prove (\ref{cadd}) it then suffices to show $\chi(R\Gamma(X_{et},K))=\chi(R\Gamma(X_{et},K^{mult}))$. This in turn will follow from the more general statement that
\begin{equation}\chi(R\Gamma(X_{et},\sigma'_*\co_{X'}/\co_{X}))=\chi(R\Gamma(X_{et},\sigma'_*\bg_{m,X'}/\bg_{m,X}))\label{sigmaprime}\end{equation}
for any universal homeomorphism $$\sigma':X'\to X$$ inducing isomorphisms on all residue fields with $X$ reduced. We prove this by induction on the dimension $d$ of $X$. First we can assume that $X'$ is reduced, using (\ref{nil}) for the nilradical of $X'$. If $d=0$ both $X$ and $X'$ are finite unions of spectra of finite fields and $\sigma'$ is an isomorphism. In general, let $Z\hookrightarrow X$ be the singular locus, a proper closed subset of $X$, with its reduced scheme structure, and let
\[\begin{CD} Z' @>>> X'\\ @VVV @VV\sigma' V\\ Z @>>> X\end{CD}\]
be the pullback under $\sigma'$. Then $Z'\to Z$ is again a universal homeomorphism inducing isomorphisms on all residue fields with $Z$ reduced, to which the induction assumption applies.

Since $X\setminus Z$ is smooth, hence seminormal, the restriction of $\sigma'$ to $X'\setminus Z'$ has a section, hence is an isomorphism as $X'$ is reduced.  It follows that $\co_{X'}/\co_X$ is supported on $Z$ (we omit $\sigma'_*$ since $\sigma'$ is a homeomorphism). The morphism $\sigma'$ is finite as the seminormalization factors through it. Hence $\co_{X'}/\co_X$ is coherent and there exists $r$ such that $\cm^r \cdot \co_{X'}/\co_X=0$ where $\cm$ denotes the ideal sheaf of $Z$. Setting $\cm'=\cm\co_{X'}$ we have an exact sequence of coherent sheaves
\[ 0\to \cm'/\cm\to \co_{X'}/\co_X\to  \co_{Z'}/\co_Z\to 0\]
supported on $Z$. Since $(\cm')^{r+1}=\cm^{r+1}\co_{X'}\subseteq\cm$ we have a finite filtration by coherent subsheaves
\[ \cm'/\cm \supseteq \cdots\supseteq \mathcal{N}^i \subseteq\cdots\supseteq  \mathcal{N}^{r+1}=0\]
where $\mathcal{N}^i:=(\cm')^i/(\cm')^i\cap\cm$.
On the multiplicative side we have an exact sequence of abelian sheaves on $X_{et}$ supported in $Z$
\[ 0\to (1+\cm')/(1+\cm)\to \bg_{m,X'}/\bg_{m,X}\to  \bg_{m,Z'}/\bg_{m,Z}\to 1\]
with corresponding filtration
\[ (1+\cm')/(1+\cm)=1+\cm'/\cm \supseteq \cdots\supseteq 1+\mathcal{N}^i \supseteq \cdots\supseteq 1+\mathcal{N}^{r+1}=1\]
and an isomorphism of subquotients.
\[ \mathcal{N}^i/\mathcal{N}^{i+1}\cong (1+\mathcal{N}^i)/(1+\mathcal{N}^{i+1});\quad  x\mapsto 1+x.\]
Note here that all sections $1+x\in 1+\cm'/\cm$ are invertible, since $x^{r+1}\in\cm$. We conclude that
\begin{equation}\chi(R\Gamma(X_{et},\cm'/\cm))=\chi(R\Gamma(X_{et},1+\cm'/\cm))\notag\end{equation}
and together with the induction assumption we obtain (\ref{sigmaprime}).
\end{proof}

\section{Artin-Verdier duality}\label{sec:av}

A key ingredient in our construction of Weil-\'etale complexes is Artin-Verdier duality with torsion coefficients, in the form of Conjecture $\mathbf{AV}(\mathcal{X},n)$ introduced in \cite{Flach-Morin-16}[Conj. 6.23]. The compact support cohomology $\hat{H}^i_c$ is defined as in \cite{milduality}[II, 2.3] using Tate cohomology at all archimedean places.

\begin{conjecture}$\mathbf{AV}(\mathcal{X},n)$ There is a symmetric product map
$$\mathbb{Z}(n)\otimes^L \mathbb{Z}(d-n)\rightarrow \mathbb{Z}(d)$$ in $\mathcal{D}(\mathcal{X}_{et})$
such that the induced pairing
$$\widehat{H}^{i}_c(\mathcal{X}_{et},\mathbb{Z}/m(n))\times H^{2d+1-i}(\mathcal{X}_{et},\mathbb{Z}/m(d-n))\rightarrow \widehat{H}^{2d+1}_c(\mathcal{X}_{et},\mathbb{Z}/m(d))\rightarrow\mathbb{Q}/\mathbb{Z}$$
is a perfect pairing of finite abelian groups for any $i\in\mathbb{Z}$ and any positive integer $m$.
\label{av}\end{conjecture}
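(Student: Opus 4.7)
The plan is to build the duality from three ingredients: a symmetric product-and-trace on Bloch's cycle complex, prime-to-residue-characteristic duality coming from classical \'etale Artin-Verdier theory, and a $p$-adic Hodge/eh-topological argument at the residue characteristics. Throughout I would reduce to the case $m=\ell^r$ for a single prime $\ell$ and pass to the inverse limit at the end.

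First I would construct the symmetric product $\bz(n)\otimes^L\bz(d-n)\to\bz(d)$ from the Bloch-Levine cup product on higher Chow complexes, $z^n(-,\bullet)\otimes z^{d-n}(-,\bullet)\to z^d(-,\bullet)$, which is graded-commutative in the derived category and, after sheafifying on the small \'etale site of $\mathcal{X}$, yields the required map in $\mathcal{D}(\mathcal{X}_{et})$. The trace $\widehat{H}^{2d+1}_c(\mathcal{X}_{et},\bz/m(d))\to\bq/\bz$ is assembled prime by prime: for $\ell$ invertible on $\mathcal{X}$ via the Suslin-Voevodsky comparison $\bz/\ell^r(d)\simeq\mu_{\ell^r}^{\otimes d}$ together with the classical arithmetic trace of \cite{milduality}[II.3]; for $\ell=p$ a residue characteristic via Geisser's trace on logarithmic de Rham-Witt sheaves, using $\bz/p^r(d)|_{\mathcal{X}_{\bF_p}}\simeq W_r\Omega^d_{\log}[-d]$ on the eh-site as developed in Section~\ref{sec:correction}.

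Next I would prove perfectness by $\ell$-adic localization along $\mathcal{X}_{\bF_\ell}\hookrightarrow\mathcal{X}\hookleftarrow\mathcal{X}[1/\ell]$. When $\ell$ is invertible on $\mathcal{X}$ the coefficient $\bz/\ell^r(n)$ is the locally constant \'etale sheaf $\mu_{\ell^r}^{\otimes n}$ on the whole of $\mathcal{X}$, and the statement reduces to classical Artin-Verdier duality for the regular arithmetic scheme $\mathcal{X}$ in the form of \cite{milduality}[II.3-4], including the Tate-cohomology contribution at the archimedean places that enters the definition of $\widehat{H}^\bullet_c$. Smooth base change combined with a five-lemma in the localization triangle then delivers the perfect pairing at every prime invertible on $\mathcal{X}$.

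The main obstacle is $\ell=p$ equal to a residue characteristic, where $\bz/p^r(n)$ is no longer locally constant. The strategy I would pursue splits into two halves, to be glued along the localization triangle: on the closed fiber $\mathcal{X}_{\bF_p}$, use $\bz/p^r(n)\simeq W_r\Omega^n_{\log}[-n]$ together with Milne's duality pairing $W_r\Omega^n_{\log}\otimes W_r\Omega^{d-1-n}_{\log}\to W_r\Omega^{d-1}_{\log}\to\bq/\bz$ on smooth proper $\bF_p$-varieties, extended across singular strata via the eh-topology machinery of Section~\ref{sec:correction}; on the generic fiber, use the Bloch-Kato comparison between $\bz/p^r(n)$ and truncated syntomic sheaves and invoke Kato-Saito style self-duality for syntomic cohomology of regular $p$-adic schemes. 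The precise step that ceases to be formal is the compatibility of these two local dualities and their traces with the global compact-support trace along the localization triangle; this is where the conjecture remains open in general, and it is why for the arithmetic-surface case $d=2$, $n=1$ of Section~\ref{sec:av} we instead bypass the gluing by invoking Saito's arithmetic duality theorem \cite{Saito89} directly.
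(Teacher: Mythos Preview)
The statement you are addressing is presented in the paper as a \emph{conjecture}, not as a theorem; the paper does not prove $\mathbf{AV}(\mathcal{X},n)$ in general and explicitly records it as open outside the ranges $n\leq 0$, $n\geq d$, or $\mathcal{X}$ smooth proper over a number ring. Your proposal is therefore not being compared against a proof in the paper, because there is none for the general statement. What you have written is not a proof either: it is a survey of the expected inputs (Bloch--Levine products, Artin--Verdier duality away from $p$, logarithmic de Rham--Witt/syntomic input at $p$), together with an honest admission in your final paragraph that the gluing step at residue characteristics is exactly where the argument breaks down. That admission is correct and matches the paper's stance.

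Where the paper does prove something is Proposition~\ref{avprop}: the special case $d=2$, $n=1$ for an arithmetic surface. Your last sentence correctly identifies Saito \cite{Saito89} as the key input there, and this aligns with the paper's approach. But note that the paper's proof of Proposition~\ref{avprop} is more specific than your sketch suggests: it does not use eh-topology or syntomic cohomology at $p$. Instead it runs the localization triangle for $\mathcal{X}_{\bF_p}\hookrightarrow\mathcal{X}\hookleftarrow\mathcal{X}[1/p]$, handles the open part via purity $\tilde{f}^!\mu_{p^\nu}\cong\mu_{p^\nu}^{\otimes 2}[2]$ and classical Artin--Verdier duality on $S[1/p]$, and handles the closed-fiber part by invoking Saito's explicit computation of the duality map $H^{4-i}_{\mathcal{X}_{\bF_p}}(\mathcal{X},\bg_m)\to H^i(\mathcal{X}_{\bF_p},\bg_{m,\mathfrak{X}})^*$ on the formal completion, together with Lemma~\ref{formallemma} to pass from $i^*\bg_m/p^\nu$ to $\bg_{m,\mathfrak{X}}/p^\nu$. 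So if you intended your proposal to cover the arithmetic-surface case as well, the actual mechanism at $p$ is Saito's direct computation with $\bg_m$ on the formal scheme, not a Milne-type logarithmic de Rham--Witt duality extended via eh.
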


This conjecture is known for $\mathcal{X}$ smooth proper over a number ring, and for regular proper $\mathcal{X}$ as long as $n\leq 0$ or $n\geq d$. Therefore, if $\X$ is an arithmetic surface, the only unresolved case is $n=1$ which we shall prove using results of Saito in \cite{Saito89}.

\begin{prop} Conjecture $\mathbf{AV}(\mathcal{X},1)$ holds if $\X$ is an arithmetic surface.
\label{avprop}\end{prop}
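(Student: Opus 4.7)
The plan is to deduce Conjecture $\mathbf{AV}(\X,1)$ from Saito's arithmetic duality theorem for regular arithmetic surfaces \cite{Saito89}.

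First, using $\bz(1)\cong\bg_m[-1]$ on $\X_{et}$, one constructs the symmetric product $\bz(1)\otimes^L\bz(1)\to\bz(2)$ by symmetrizing the natural multiplicative structure on Bloch's higher Chow complex. Modulo $m$ the induced pairing decomposes into $\ell$-primary parts, which we treat separately. For primes $\ell$ invertible on $\X$, the sheaves $\bz/\ell^r(n)$ are identified with $\mu_{\ell^r}^{\otimes n}$, and the required perfect pairing is classical Artin-Verdier duality for regular proper arithmetic schemes as in \cite{milduality}, which is already known to hold.

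The essential input is therefore the $p$-primary part at the residue characteristics $p$ of $\X$. Via the localization sequence for the closed immersion $\X_{\bF_p}\hookrightarrow \X_{\bz_p}$, this reduces to duality statements involving the restrictions of $\bz/p^r(n)$ to the special fibers. On smooth loci these are identified, by the Geisser-Levine theorem, with shifted logarithmic de Rham-Witt sheaves $W_r\Omega^n_{\log}[-n]$. Saito's main theorem in \cite{Saito89} provides exactly the perfect pairing
$$H^i_c(\X_{et},\bz/p^r(1))\times H^{5-i}(\X_{et},\bz/p^r(1))\to \bq/\bz,$$
constructed via a trace map in degree $5$ that combines the usual trace on the generic fibre with the residue map on the logarithmic de Rham-Witt complex on special fibres. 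The modification from $H^i_c$ to Tate-modified $\hat{H}^i_c$ at archimedean places only affects the $2$-primary part when $F$ has real places; at each such place the standard self-duality of Tate cohomology of $G_\br$ combined with local duality on $\X_\br$ preserves perfectness.

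The main obstacle is verifying that the pairing induced by our product map on $\bz(n)$ coincides (up to sign) with Saito's pairing. Both pairings are characterized by their compatibility with local residue/trace maps at closed points of $\X$: for $\ell\neq p$ via the classical trace on étale cohomology with $\mu_{\ell^r}^{\otimes 2}$ coefficients, and for $\ell=p$ via the residue map on $W_r\Omega^2_{\log}$. This axiomatic characterization, together with the compatibility of the product on Bloch's higher Chow complex with purity isomorphisms, reduces the identification to a pointwise verification that is explicit at each closed point, completing the argument.
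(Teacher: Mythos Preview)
Your overall plan---reduce to Saito's results via a localization/open-closed decomposition---is the same as the paper's, but several steps are stated too loosely to constitute a proof, and one is misstated.

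First, the sentence ``for primes $\ell$ invertible on $\X$'' is vacuous here: $\X$ is flat over $\Spec(\bz)$, so every prime is a residue characteristic and no $\ell$ is invertible on $\X$. What you need is, for \emph{each} prime $p$, the open-closed decomposition $\X[1/p]\hookrightarrow\X\hookleftarrow\X_{\bF_p}$. On the open piece $\X[1/p]$ one does have $\bz/p^\nu(1)\cong\mu_{p^\nu}$, but the duality there is not simply ``classical Artin-Verdier duality from \cite{milduality}'': Milne's book treats the base curve $S$, not the surface. The paper obtains the duality on $\X[1/p]$ by combining absolute purity $\tilde{f}^!\mu_{p^\nu}\cong\mu_{p^\nu}^{\otimes 2}[2]$ (which needs the regularity of $\X$), the adjunction $(R\tilde{f}_*,\tilde{f}^!)$, and Artin-Verdier duality on $S[1/p]$. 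You should make this reduction explicit.

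Second, your appeal to Saito is too coarse. Saito's input used in the paper is not a ready-made perfect pairing $\hat{H}^i_c(\X,\bz/p^r(1))\times H^{5-i}(\X,\bz/p^r(1))\to\bq/\bz$; it is the local statement \cite{Saito89}[Thm.~4.13] that the map
\[
H^{4-i}_{\X_{\bF_p}}(\X,\bg_m)\longrightarrow H^i(\X_{\bF_p},\bg_{m,\mathfrak{X}})^*
\]
is an isomorphism for $i=0,1$, where $\mathfrak{X}$ is the formal completion. The paper then does two pieces of work you have not addressed: (a) it shows, by computing $H^i(\X_{\bF_p},\bg_{m,\mathfrak{X}})$ following \cite{Saito89}[Prop.~4.6], that the map is an isomorphism for $i\neq 3$ and has uniquely divisible cokernel for $i=3$, hence becomes an isomorphism modulo $p^\nu$ in all degrees; and (b) it proves (Lemma~\ref{formallemma}) that $i^*\bg_m/p^\nu\cong\bg_{m,\mathfrak{X}}/p^\nu$, which is what allows one to replace the formal-scheme cohomology by $H^i(\X_{\bF_p},i^*\bz(1)/p^\nu)$ and then feed it into the localization triangle. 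Step (b) is not formal and is exactly the kind of $p$-adic input your sketch elides.

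Finally, your ``axiomatic characterization'' paragraph promises a pointwise verification but does not carry it out. In the paper this compatibility (that the pairing on $\bz(1)\otimes^L\bz(1)\to\bz(2)$ modulo $p^\nu$ is the unique extension of $\mu_{p^\nu}\otimes\mu_{p^\nu}\cong\mu_{p^\nu}^{\otimes 2}$) is established separately as Proposition~\ref{satoprop}, by showing $\bz(2)/p^\nu$ satisfies the defining properties of Sato's complex $\mathfrak{T}_\nu(2)_\X$ and invoking Sato's uniqueness argument; it is not an immediate consequence of ``compatibility with purity''. For the bare statement of Proposition~\ref{avprop} one can bypass this by constructing the duality isomorphism directly from the localization triangle, as the paper does, but if you want to identify the pairing with a specific product on motivic complexes you must supply this argument.
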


\begin{proof} It suffices to prove the statement for an arbitrary prime power  $m=p^\nu$. Consider the open/closed decomposition
\begin{equation}\begin{CD} \X_{\bF_p} @>i>> \X @<j<< \X[1/p]\\
@VVV @VVfV @VV\tilde{f}V\\
S_{\bF_p} @>>> S @<<< S[1/p]
\end{CD}\label{openclosed}\end{equation}
and note that $j^*\bz(1)/p^\nu\cong \mu_{p^\nu}$. Denoting by $A^*:=R\Hom(A,\bq/\bz)$ the $\bq/\bz$-dual, we have isomorphisms
\begin{align}R\Gamma(\X[1/p],\mu_{p^\nu})\cong & R\Gamma(\X[1/p],R\underline{\Hom}_{\X_[1/p]}(\mu_{p^\nu},\mu_{p^\nu}^{\otimes 2}))\notag\\
\cong & R\Hom_{\X[1/p]}(\mu_{p^\nu},\mu_{p^\nu}^{\otimes 2})\notag\\
\cong & R\Hom_{\X[1/p]}(\mu_{p^\nu},R\tilde{f}^!\mu_{p^\nu})[-2]\notag\\
\cong & R\Hom_{S[1/p]}(R\tilde{f}_*\mu_{p^\nu},\mu_{p^\nu})[-2]\notag\\
\cong &\hat{R}\Gamma_c(S[1/p],R\tilde{f}_*\mu_{p^\nu})^*[-5] \notag\\
\cong &\hat{R}\Gamma_c(\X[1/p],\mu_{p^\nu})^*[-5]\label{openduality}
\end{align}
using purity $\tilde{f}^!\mu_{p^\nu}\cong \mu_{p^\nu}^{\otimes 2}[2]$, as $\X$ and $S$ are regular of dimensions $2$ and $1$, respectively \cite{fuji96}, the adjunction $(R\tilde{f}_*,\tilde{f}^!)$ since $\tilde{f}$ is proper, and Artin-Verdier duality on $S[1/p]$ \cite{milduality}[II, Thm. 3.1]. Also note that $R\tilde{f}_*\mu_{p^\nu}$ is a complex with constructible cohomology as $\tilde{f}$ is proper.

Let $\mathfrak{X}$ be the formal completion of $\X$ at the ideal sheaf $I=(p)$ where we view the structure sheaf $\co_{\mathfrak{X}}$ as a sheaf on $\X_{\bF_p,et}$. By \cite{Saito89}[Thm. 4.13] the map
\begin{equation} H^{4-i}_{\X_{\bF_p}}(\X,\bg_m)\to H^i(\X_{\bF_p},\bg_{m,\mathfrak{X}})^*\label{saitomap}\end{equation}
is an isomorphism for $i=0,1$. The groups $H^i(\X_{\bF_p},\bg_{m,\mathfrak{X}})$ can be computed following the computation of
$H^i(\X_{\bF_p},i^*\bg_m)$ in \cite{Saito89}[Prop. 4.6 (2)], and the result is the same for $i\neq 0,1$. More precisely, the rings $\co_x$, $\co_\eta$, $\co_\lambda$ of loc. cit. get replaced by the corresponding local rings of $\mathfrak{X}$, which are again Henselian local with unchanged residue field. Since $\bg_m$ is a smooth group scheme, its cohomology in degrees $\geq 1$ coincides with that of the residue field \cite{miletale}[III. 3.11].

It follows then from \cite{Saito89}[Prop. 4.6 (1),(2)] that (\ref{saitomap}) is an isomorphism for $i\neq 3$ and has cokernel the uniquely divisible group $(\hat{\bz}/\bz)^r$ for $i=3$ where $r$ is the number of irreducible components of $\X_{\bF_p}$. Hence
\begin{equation} H^{3-i}_{\X_{\bF_p}}(\X,\bg_m/p^\nu)\to H^i(\X_{\bF_p},\bg_{m,\mathfrak{X}}/p^\nu)^*\notag\end{equation}
is an isomorphism for all $i$. On the other hand by Lemma \ref{formallemma} we have an isomorphism $i^*\bg_m/p^\nu\cong\bg_{m,\mathfrak{X}}/p^\nu$ and hence an isomorphism
\begin{equation} H^{3-i}_{\X_{\bF_p}}(\X,\bg_m/p^\nu)\xrightarrow{\sim} H^i(\X_{\bF_p},i^*\bg_m/p^\nu)^*\notag\end{equation}
or equivalently
\begin{equation} H^{5-i}_{\X_{\bF_p}}(\X,\bz(1)/p^\nu)\xrightarrow{\sim} H^i(\X_{\bF_p},i^*\bz(1)/p^\nu)^*\label{saitomap2}\end{equation}
for all $i$. Now consider the duality map on localization triangles
\[\minCDarrowwidth1em\begin{CD}
R\Gamma_{\X_{\bF_p}}(\X,\bz(1)/p^\nu) @>>>R\Gamma(\X,\bz(1)/p^\nu) @>>> R\Gamma(\X[1/p],\mu_{p^\nu}) @>>>\\
@V(\ref{saitomap2})VV  @VVV @V (\ref{openduality})VV @.\\
R\Gamma(\X_{\bF_p},i^*\bz(1)/p^\nu)^*[-5]@>>> \hat{R}\Gamma_c(\X,\bz(1)/p^\nu)^*[-5] @>>> \hat{R}\Gamma_c(\X[1/p],\mu_{p^\nu})^*[-5] @>>>{}
\end{CD}\]
where we have shown the outer vertical maps to be quasi-isomorphisms. It follows that the middle vertical map is a quasi-isomorphism which finishes the proof of Prop. \ref{avprop}.
\end{proof}

The remainder of this section is aimed at the proof of Corollary \ref{compatibleduality} below. The results of Saito involve Lichtenbaum's pairing \cite{Saito89}[(1.1)]
\begin{equation} \bg_m[-1]\otimes^L\bg_m[-1]\to\bz(2)^{Li}\label{saitopairing}\end{equation}
together with the trace map
\begin{equation}H^6(\X,\bz(2)^{Li})\to\bq/\bz\notag\end{equation}
constructed in \cite{Saito89}[Thm. 3.1]. The relation to Bloch's complex $\bz(2)$ is given by maps
\begin{equation}  \bz(2)^{Li}\to \mathcal K_{/\X}\xleftarrow{\sim} \tau^{\geq 1}\bz(2)\leftarrow\bz(2)   \label{zhongmap}\end{equation}
where $\mathcal K_{/\X}$ is the complex constructed by Spiess in \cite{Spiess96} and the middle isomorphism is \cite{Zhong14}[Thm. 3.8].
The pairing (\ref{saitopairing}), combined with the map (\ref{zhongmap}) and taken modulo $p^\nu$, can also be characterized as the unique pairing constructed by the method of Sato in \cite{Sato07}. Even though Sato assumes $\X$ to have semistable reduction, his arguments work in our situation where $\X$ is a relative curve and $n=m=1$. We summarize the properties we need in the following Proposition.

\begin{prop} a) The map $(\tau^{\geq 1}\bz(2))/p^\nu\leftarrow\bz(2)/p^\nu$ is a quasi-isomorphism.

b) The pairing
\begin{equation} \bz(1)/p^\nu\otimes^L\bz(1)/p^\nu\to \bz(2)/p^\nu\label{satopairing}\end{equation}
obtained by combining (\ref{saitopairing}), (\ref{zhongmap}) and a) is the unique pairing extending $$\mu_{p^\nu}\otimes\mu_{p^\nu}\cong \mu_{p^\nu}^{\otimes 2}$$ on $\X[1/p]_{et}$.

c) There is a commutative diagram in $D(S_{et},\bz/p^\nu)$
\begin{equation}
\begin{CD} Rf_*\bz(1)/p^\nu[2]\otimes^L \bz(1)/p^\nu @>\trace^0_f\otimes\id>>\bz(0)/p^\nu\otimes^L \bz(1)/p^\nu@=\bz(1)/p^\nu\\
@V\id\otimes f^*VV @.\Vert\\
Rf_*\bz(1)/p^\nu[2]\otimes^L Rf_*\bz(1)/p^\nu @>>>Rf_*\bz(2)[2]/p^\nu@>\trace^1_f>> \bz(1)/p^\nu
\end{CD}
\label{satodiagram}\end{equation}
where $\trace_f$, resp. $f^*$, is constructed as in \cite{Sato07}[Thm. 7.1.1], resp.  \cite{Sato07}[Prop. 4.2.8].
\label{satoprop}\end{prop}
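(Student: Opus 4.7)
The strategy is to treat the three parts in turn, each time reducing to classical facts on the generic fibre and then extending across $\X_{\bF_p}$ by the method Sato developed in \cite{Sato07}. The key point allowing us to invoke his construction is that, although \cite{Sato07} is written under a semistable reduction hypothesis, the specific arguments we need only require $\X$ to be regular of relative dimension $1$ with smooth generic fibre, and only involve his formalism in the case $n=m=1$.

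For (a), the statement is equivalent to $(\tau^{\leq 0}\bz(2))/p^\nu \simeq 0$ on $\X_{et}$, which in turn reduces to checking that $\mathcal{H}^0(\bz(2))$ has trivial $p^\nu$-torsion and is uniquely $p^\nu$-divisible on étale stalks. Both statements follow from the standard vanishing of étale motivic cohomology of a regular henselian local ring in degrees $<n$: for $n=2$ this amounts to the fact that codimension-$2$ cycles on such a ring are rationally equivalent to zero. Alternatively, at primes away from $p$ this is the Beilinson--Lichtenbaum identification $\bz(2)/p^\nu\simeq\mu_{p^\nu}^{\otimes 2}$ on $\X[1/p]_{et}$, while the $p$-primary case is Geisser--Levine combined with Sato's syntomic description of $\bz(2)/p^\nu$ on $\X_{et}$.

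For (b), I would restrict to $\X[1/p]$, where by (a) and Beilinson--Lichtenbaum we have $\bz(1)/p^\nu\simeq\mu_{p^\nu}$ and $\bz(2)/p^\nu\simeq\mu_{p^\nu}^{\otimes 2}$; the canonical pairing is then multiplication of roots of unity, which is the unique natural such pairing. To propagate uniqueness to all of $\X_{et}$, one uses Sato's presentation of $\bz(2)/p^\nu$ as an extension of $\tau^{\leq 2}Rj_*\mu_{p^\nu}^{\otimes 2}$ by a complex of $p$-adic vanishing cycles supported on $\X_{\bF_p}$. Uniqueness then reduces to the vanishing of
\[ \Hom_{D(\X_{et})}\!\bigl(\bz(1)/p^\nu\otimes^L\bz(1)/p^\nu[k],\ i_*(\text{vanishing cycles})\bigr)=0 \quad (k=-1,0), \]
which is a local computation on $\X_{\bF_p}$ of the kind carried out in \cite{Sato07}.

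For (c), I would first construct the trace maps $\trace^0_f$ and $\trace^1_f$ following \cite{Sato07}[Thm. 7.1.1], using that $f$ is proper with one-dimensional regular generic fibre; the bottom horizontal composite in (\ref{satodiagram}) is the trace obtained by applying $Rf_*$ to the pairing of (b) and post-composing with $\trace^1_f$. The commutativity of (\ref{satodiagram}) is then the projection formula $\trace_f(x\cdot f^*y)=\trace_f(x)\cdot y$. On the generic fibre $\X_F\to\Spec(F)$ this is the classical cup-product compatibility of the étale trace map for a smooth proper curve, as in \cite{sga4}; by (b) this identity extends uniquely across $\X_{\bF_p}$. The main obstacle I anticipate is verifying that Sato's construction of $\trace^i_f$ and his proof of the projection formula really do go through without the semistable reduction hypothesis: this should reduce to checking that absolute purity \cite{fuji96} and the duality results of \cite{Saito89} invoked in Proposition \ref{avprop} supply all the local inputs Sato needs on $\X_{\bF_p}$.
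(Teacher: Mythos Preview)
Your overall strategy for (b) and (c) matches the paper's, but there is a genuine gap in (a), and a crucial verification is missing from (b).

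For (a), your reduction to ``$\mathcal{H}^0(\bz(2))$ has trivial $p^\nu$-torsion and is uniquely $p^\nu$-divisible'' is incorrect on two counts. First, the vanishing of $(\tau^{\leq 0}\bz(2))/p^\nu$ requires control of \emph{all} $\mathcal{H}^i(\bz(2))$ for $i\leq 0$, not just $i=0$; that is Beilinson--Soul\'e type vanishing, which is conjectural. Second, your justification ``codimension-$2$ cycles on such a ring are rationally equivalent to zero'' concerns the ordinary Chow group $CH^2(R)$, which is $\mathcal{H}^2(\bz(2))$, not $\mathcal{H}^0(\bz(2))=CH^2(-,4)$; the latter has no elementary description. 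Your ``alternatively'' is in the right direction but is not a proof. The paper avoids all of this by using Zhong's isomorphism $\tau^{\geq 1}\bz(2)\cong\mathcal{K}_{/\X}$ together with the explicit localization triangle
\[
\tau^{\leq 1}(i_*\bz(1)^Z/p^\nu)[-2]\to \bz(2)/p^\nu \to \tau^{\leq 2}Rj_*\mu_{p^\nu}^{\otimes 2}
\]
from \cite{Flach-Morin-16}[Lemma 7.7]; this shows directly that $\bz(2)/p^\nu$ is concentrated in degrees $0,1,2$ with $\mathcal{H}^0(\bz(2)/p^\nu)\cong j_*\mu_{p^\nu}^{\otimes 2}$, and then identifies this with $\mathcal{H}^1(\bz(2))_{p^\nu}$ via Spiess's computation $(\mathcal{H}^1\mathcal{K}_{/\X})_{p^\nu}\cong\eta_*K_3^{ind}(\bar{L})_{p^\nu}\cong\eta_*\mu_{p^\nu}^{\otimes 2}$. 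No appeal to unproven vanishing is needed.

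For (b), you correctly identify that one must show $\bz(2)/p^\nu$ satisfies Sato's defining properties for $\mathfrak{T}_\nu(2)_\X$ and then invoke his uniqueness result \cite{Sato07}[Prop.~4.2.6]. What you do not do --- and what is the actual work --- is verify the key hypothesis on the closed fibre: one must show $\bz(1)^Z/p^\nu\cong\nu^1_{Z,\nu}[-1]$ in Sato's notation. The paper does this explicitly via Zhong's identification of $\bz(1)^Z/p^\nu$ with the logarithmic de Rham--Witt complex, followed by a direct check that the boundary map $\partial$ in that complex is surjective (using that the special fibre, after base change to $\bar{\bF}_p$, admits enough hyperplane sections). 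Without this computation, Sato's uniqueness argument cannot be invoked. Your anticipated obstacle at the end is exactly right, but the resolution comes from this concrete calculation rather than from absolute purity or Saito's duality.
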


\begin{proof} Since $\mathcal K_{/\X}$ is concentrated in degrees $1$ and $2$ by \cite{Spiess96}[1.6.2.(A1)], it follows that $\bz(2)$ is concentrated in degrees $\leq 2$, i.e. \cite{Flach-Morin-16}[Conj. 7.1] holds true. By \cite{Flach-Morin-16}[Lemma 7.7] there is then an exact triangle
\begin{equation} \tau^{\leq 1}(i_*\bz(1)^Z/p^\nu)[-2]\to \bz(2)/p^\nu \to \tau^{\leq 2} Rj_*\mu_{p^\nu}^{\otimes 2}\to\label{localization}\end{equation}
where $i$ and $j$ are as in (\ref{openclosed}) and we set $Z:=\X_{\bF_p}$. Hence $\bz(2)/p^\nu$ is concentrated in degrees $0,1,2$ and $\mathcal H^0(\bz(2)/p^\nu)\cong j_*\mu_{p^\nu}^{\otimes 2}\cong j_*\mathcal H^1(\bz(2))_{p^\nu}$. This last identity follows from
$$(\mathcal H^1\mathcal K_{/\X})_{p^\nu}\cong \eta_*\mathcal H^1(\bz(2)^{Li})_{p^\nu}\cong \eta_*K_3^{ind}(\bar{L})_{p^\nu}\cong \eta_*\mu_{p^\nu}^{\otimes 2}$$ where $\eta:\Spec(L)\to\X$ is the function field of $\X$ (see the proof of \cite{Spiess96}[1.5.1]). This concludes the proof of a).

By \cite{Zhong14}[Thm.1.1] the complex $\bz(1)^Z/p^\nu$ is quasi-isomorphic to the logarithmic deRham-Witt complex
\begin{align} \bz(1)^Z/p^\nu\cong&\left[ \bigoplus_{z\in Z^0}i_{z,*}W_\nu\Omega^1_{\kappa(z),\log}\xrightarrow{\partial}\bigoplus_{x\in Z^1}i_{x,*}W_\nu\Omega^0_{\kappa(x),\log} \right] \notag\\
\cong &\left[ \bigoplus_{z\in Z^0}i_{z,*}(\kappa(z)^\times)/p^\nu\xrightarrow{\partial}\bigoplus_{x\in Z^1}i_{x,*}\bz/p^\nu                       \right]
\notag\label{logdr}\end{align}
placed in degrees $1$ and $2$. Note, incidentally, that this complex only depends on the underlying topological space and the residue fields of $Z$, hence coincides for $Z$ and $Z^{sn}$. The stalk of $\partial$ at a point $x\in Z^1$ is the map
\[ \bigoplus_{x\in\overline{\{z\}}}(\kappa(z)^\times)/p^\nu \xrightarrow{\sum_{z,P} v_P} \bz/p^\nu\]
where for each irreducible component $\overline{\{z\}}$ of $Z$ passing through $x$ the sum is over all valuations of $\kappa(z)$ lying over $x$, i.e. over all points $P$ of the normalization of $\overline{\{z\}}$ lying above $x$. Since we are considering stalks in the \'etale topology, we can assume the base field is $\bar{\bF}_p$, hence infinite. Let $U\subseteq \ba^N$ be an affine neighborhood of the points $P$. Then there exists a hyperplane $H\subseteq\ba^N$ intersecting $U$ transversely in one of the $P$, say $P_0$, and not in any other $P$. The linear form with zero set $H$, restricted to $U$, gives a function $f\in\kappa(z)$ with $v_{P_0}(f)=1$ and $v_{P}(f)=0$ for $P\neq P_0$, i.e. $\partial_x(f)=1$. Hence $\partial$ is surjective and
\[\bz(1)^Z/p^\nu\cong \ker(\partial)[-1]\]
is concentrated in degree one. In the notation of \cite{Sato07}[2.2] one has $\ker(\partial)\cong \nu^1_{Z,\nu}$. The localisation triangle
(\ref{localization}) then shows that $\bz(2)/p^\nu$ satisfies the defining properties of the complex $\mathfrak T_\nu(2)_\X$ constructed by Sato in \cite{Sato07}[Lemma 4.2.2] (under the semistable reduction assumption). Similarly, the complex $\bz(1)/p^\nu=\bg_m[-1]/p^\nu$ satisfies the defining properties of $\mathfrak T_\nu(1)_\X$ since the proof of \cite{Sato07}[Prop. 4.5.1] in fact goes through for arbitrary regular $\X$.

The proof of \cite{Sato07}[Prop. 4.2.6] then goes through to characterize (\ref{satopairing}) as the unique product extending $\mu_{p^\nu}\otimes\mu_{p^\nu}\cong \mu_{p^\nu}^{\otimes 2}$. This gives b).

Finally, the proof of \cite{Sato07}[Cor. 7.2.4] for $f:\X\to S$, hence $c=-1$, $n=1$, $m=0$ goes through to give c).

\end{proof}

\begin{corollary} There is a commutative diagram of duality isomorphisms
\[\begin{CD}
R\Gamma(S,\bz(1)/p^\nu) @>>> R\Gamma(\X,\bz(1)/p^\nu) \\
@V \mathbf{AV}(S,1) VV  @V\mathbf{AV}(\mathcal{X},1) VV \\
\hat{R}\Gamma_c(S,\bz/p^\nu)^*[-3] @>>> \hat{R}\Gamma_c(\X,\bz(1)/p^\nu)^*[-5]
\end{CD}\]
where the top, resp. bottom, horizontal map is induced by $f^*$, resp. $\trace_f^0$ in (\ref{satodiagram}).
\label{compatibleduality} \end{corollary}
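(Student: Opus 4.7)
The plan is to interpret both vertical Artin-Verdier maps as arising from cup products with traces, and then deduce the commutativity from the projection-formula diagram (\ref{satodiagram}) established in Prop.~\ref{satoprop}~c).

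First I would unwind the definitions. The map $\mathbf{AV}(S,1)$ is adjoint to the pairing
\[
R\Gamma(S,\bz(1)/p^\nu)\otimes^L \hat{R}\Gamma_c(S,\bz/p^\nu)\longrightarrow \hat{R}\Gamma_c(S,\bz(1)/p^\nu)\xrightarrow{\trace_S} \bq/\bz[-3]
\]
coming from the trivial product $\bz(1)/p^\nu\otimes^L\bz(0)/p^\nu=\bz(1)/p^\nu$. By Prop.~\ref{satoprop}~b), the map $\mathbf{AV}(\X,1)$ of Prop.~\ref{avprop} is adjoint to the pairing
\[
R\Gamma(\X,\bz(1)/p^\nu)\otimes^L \hat{R}\Gamma_c(\X,\bz(1)/p^\nu)\longrightarrow \hat{R}\Gamma_c(\X,\bz(2)/p^\nu)\xrightarrow{\trace_\X} \bq/\bz[-5]
\]
coming from the Sato product. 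Here the trace on $\X$ is, by construction \cite{Sato07}[Thm.~7.1.1], the composite
\[
\hat{R}\Gamma_c(\X,\bz(2)/p^\nu)[2]=\hat{R}\Gamma_c(S,Rf_*\bz(2)/p^\nu[2])\xrightarrow{\trace_f^1} \hat{R}\Gamma_c(S,\bz(1)/p^\nu)\xrightarrow{\trace_S} \bq/\bz[-3].
\]

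Thus commutativity of the square is equivalent to the equality of the two pairings
\[
R\Gamma(S,\bz(1)/p^\nu)\otimes^L \hat{R}\Gamma_c(\X,\bz(1)/p^\nu)\longrightarrow \bq/\bz[-5]
\]
obtained as follows. Going right-then-down sends $\alpha\otimes\beta$ to $\trace_\X(f^*\alpha\cup\beta)$. Going down-then-right first applies $\trace_f^0$ to $\beta$ (shifted by $[-2]$) and then pairs the result with $\alpha$ on $S$ using the trivial product and $\trace_S$, yielding $\trace_S(\alpha\cup \trace_f^0(\beta))$. The equality of these two expressions is exactly the projection formula together with the compatibility $\trace_\X=\trace_S\circ\trace_f^1$.

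To derive this equality I would apply $R\Gamma_c(S,-)$ to diagram (\ref{satodiagram}) from Prop.~\ref{satoprop}~c), tensored on the left with $\hat{R}\Gamma_c(\X,\bz(1)/p^\nu)[2]$ and on the right with $R\Gamma(S,\bz(1)/p^\nu)$: the top row produces $\trace_f^0(\beta)\cup\alpha$ in $\hat{R}\Gamma_c(S,\bz(1)/p^\nu)$, the bottom row produces $\trace_f^1(\beta\cup f^*\alpha)$ in the same complex, and the commutativity of (\ref{satodiagram}) identifies them. Composing with $\trace_S$ yields the required equality of pairings. The main subtlety — and really the only non-formal input — is Prop.~\ref{satoprop}~c), whose hypotheses (semistable reduction in \cite{Sato07}) we have verified go through in our arithmetic-surface setting because of the special choice $n=1$ and dimensions $d=2$, $d-1=1$.
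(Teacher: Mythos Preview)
Your proposal is correct and is precisely the argument the paper intends: the corollary is stated without proof because it is meant to follow formally from Prop.~\ref{satoprop}~c), and you have spelled out that deduction. The only points worth checking are the ones you flag yourself---that the pairing in $\mathbf{AV}(\mathcal{X},1)$ is the Sato product (this is the content of Prop.~\ref{satoprop}~b) together with the discussion around (\ref{saitopairing})--(\ref{zhongmap})) and that the global trace on $\mathcal{X}$ factors as $\trace_S\circ\trace_f^1$---and both are standard from Sato's framework once one knows his arguments apply in our situation.
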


\section{Isolating the $H^1$-part}\label{sec:h1}

The formulation of the special value conjecture \cite{Flach-Morin-16}[Conj 5.12] for $\zeta(\X,s)$ at $s=1$ involves the fundamental line
\[ \Delta(\X/\bz,1):={\det}_\bz R\Gamma_{W,c}(\mathcal{X},\mathbb{Z}(1))\otimes_\bz {\det}_\bz R\Gamma(\X_{Zar},L\Omega_{\X/\bz}^{<1})\]
and the exact triangle \cite{Flach-Morin-16}[(5)]
\begin{equation} R\Gamma(\X_{Zar},L\Omega_{\X/\bz}^{<1})_\br[-2]\to R\Gamma_{\Ar,c}(\X,\tr(1))\to R\Gamma_{W,c}(\mathcal{X},\mathbb{Z}(1))_\br\to \label{tangenttri}\end{equation}
which induces the trivialization (\ref{ourtheta}) of the determinant of $\Delta(\X/\bz,1)_\br$.
For each complex in (\ref{tangenttri}), we shall define in this section a corresponding complex for the relative $H^1$-motive of the morphism $f:\X\to S$ and obtain a corresponding description of $\zeta(H^1,s)$ at $s=1$. In the absence of a suitable triangulated category of motivic complexes $DM$ with a motivic $t$-structure, we isolate the relative $H^1$-motive in an ad-hoc way in the derived category of \'etale sheaves on $S$. More precisely, by \cite{grothbrauer}[Cor. 3.2] one has $R^if_*\bg_m=0$ for $i\geq 2$ and
\[ P=\Pic_{\X/S}:=R^1f_*\bg_m\]
is the relative Picard functor studied for example in \cite{ray}. One has a truncation triangle
\begin{equation} \bg_m \to Rf_*\bg_m\to P[-1]\to \label{trunc1}   \end{equation}
and we define a complex of \'etale sheaves $P^0$ by the exact triangle
\begin{equation}  P^0\to P \xrightarrow{\deg}\bz\to    \label{trunc2} \end{equation}
where the degree map is discussed for example in \cite{grothbrauer}[Sec. 4]. The complex $P^0$ serves as a substitute for the relative $H^1$-motive and we will define Weil-\'etale and Weil-Arakelov complexes associated to it according to the following table. The first column refers to definitions made in \cite{Flach-Morin-16}. In particular, $\overline{\X}$ and $\overline{S}$ denote the Artin-Verdier compactifications of $\X$ and $S$, respectively. If for example $f$ has a section the complexes in the right hand column are direct summands of those in the left hand column.
In general the exact triangles (\ref{trunc1}) and (\ref{trunc2}) will induce corresponding exact triangles for the Weil-\'etale complexes associated to $\bz(1)$, $\bz$, $P$ and $P^0$ on $S$.
\[
\begin{array}{c|c}
R\Gamma_W(\overline{\mathcal{X}},\mathbb{Z}(1))&R\Gamma_W(\overline{S},P^0)[-2]\\\hline\\
R\Gamma_{W,c}(\mathcal{X},\mathbb{Z}(1)) & R\Gamma_{W,c}(S,P^0)[-2]\\\hline\\
R\Gamma_W(\mathcal{X}_\infty,\mathbb{Z}(1))&R\Gamma_W(S_\infty,P^0)[-2]\\\hline\\
R\Gamma_{\Ar,c}(\X,\tr(1)) & \Pic^0(\X)_\br[-2]\oplus\Pic^0(\X)_\br[-3]\\\hline\\
R\Gamma(\X_{Zar},L\Omega_{\X/\bz}^{<1}) & H^1(\X,\co_{\X})[-1]\\
\end{array}
\]

The precise definition of Weil-\'etale modifications will be recalled in the proof of the following Lemma.

\begin{lemma} If $\Br(\X)$ is finite then $R\Gamma_W(\overline{S},P^0)$ is a perfect complex of abelian groups satisfying a duality
$$R\Gamma_W(\overline{S},P^0)\xrightarrow{\sim}  R\mathrm{Hom}_\bz(R\Gamma_W(\overline{S},P^0),\mathbb{Z}[-1]).$$
Its cohomology is given by
\[ H^i_W(\overline{S},P^0)=\begin{cases} \Pic^0(\X)/\Pic(\co_F) & i=0 \\ H^1(\overline{S},P^0)\oplus\Hom_\bz(\Pic^0(\X),\bz) & i=1\\
(\Pic^0(\X)_{tor}/\Pic(\co_F))^* & i=2 \end{cases}\]
where $H^1(\overline{S},P^0)=H^1(\overline{S}_{et},P^0)$ is a finite abelian group of cardinality $\#\Br(\overline{\X})\delta^2$ and $\Br(\overline{\X})$ is defined by the exact sequence
\begin{equation} 0\to\Br(\overline{\X})\to\Br(\X)\to \bigoplus\limits_{\text{$v$ real}}\Br(\X_{F_v}).\label{brauerdef}\end{equation}
In particular, $\Br(\overline{\X})$ coincides with $\Br(\X)$ if $F$ has no real places and is a subgroup of $\Br(\X)$ of co-exponent $2$ in general.
%The duality \cite{Flach-Morin-16}[Thm. 3.22]
%$$R\Gamma_W(\overline{\mathcal{X}},\mathbb{Z}(1))\xrightarrow{\sim}  R\mathrm{Hom}(R\Gamma_W(\overline{\mathcal{X}},\mathbb{Z}(1)),\mathbb{Z}[-5])$$
%together with the corresponding duality on $S$
%$$R\Gamma_W(\overline{S},\mathbb{Z}(1))\xrightarrow{\sim}  R\mathrm{Hom}(R\Gamma_W(\overline{S},\mathbb{Z}),\mathbb{Z}[-3])$$
%induces a duality
\label{h1br}\end{lemma}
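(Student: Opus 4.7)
The plan is to assemble $R\Gamma_W(\overline{S},P^0)$ from known Weil-\'etale complexes using the two defining triangles (\ref{trunc1}) and (\ref{trunc2}), pushed forward to $\overline{S}$. Applied in sequence, they relate $R\Gamma_W(\overline{S},P^0)[-2]$ to $R\Gamma_W(\overline{\X},\bz(1))$, $R\Gamma_W(\overline{S},\bz(1))$ and $R\Gamma_W(\overline{S},\bz)[-2]$. The first two are perfect and satisfy Artin-Verdier-type duality by Proposition \ref{avprop} together with the finiteness of $\Br(\X)$; the third is perfect for trivial reasons. So $R\Gamma_W(\overline{S},P^0)$ is perfect as a cofiber of perfect complexes.

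To extract the cohomology, one writes out the long exact sequences of (\ref{trunc1}) and (\ref{trunc2}) and substitutes the standard identifications: $H^1(\overline{\X},\bg_m)=\Pic(\X)$, $H^2(\overline{\X},\bg_m)=\Br(\overline{\X})$ (using (\ref{brauerdef}) and finiteness of $\Br(\X)$), $H^1(\overline{S},\bg_m)=\Pic(\co_F)$, and $H^2(\overline{S},\bg_m)=0$, $H^3(\overline{S},\bg_m)=\bq/\bz$ from Artin-Verdier on $\overline{S}$. This gives $H^0(\overline{S},P)=\Pic(\X)/\Pic(\co_F)$ and a short exact sequence
\[ 0\to \Br(\overline{\X})\to H^1(\overline{S},P)\to \bz/\delta\bz\to 0,\]
where the cyclic quotient is the image of the connecting map $H^1(\overline{S},P)\to H^3(\overline{S},\bg_m)=\bq/\bz$; its order is $\delta$ since this image measures the Hasse-invariant obstruction to the existence of a global divisor of degree one on $\X_F$. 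Applying (\ref{trunc2}) next yields $H^0(\overline{S},P^0)=\Pic^0(\X)/\Pic(\co_F)$ and
\[ 0\to \bz/\delta\bz\to H^1(\overline{S},P^0)\to H^1(\overline{S},P)\to 0,\]
whose kernel is the cokernel of $\deg\colon H^0(\overline{S},P)\to\bz$, again cyclic of order $\delta$. The two factors multiply to $\#H^1(\overline{S},P^0)=\#\Br(\overline{\X})\cdot\delta^2$.

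The Weil-\'etale modification inserts the free summand $\Hom_\bz(\Pic^0(\X),\bz)$ in cohomological degree one, mirroring (and in fact induced by) the standard modification for $\bz(1)$ on $\overline{\X}$. The claimed duality
\[ R\Gamma_W(\overline{S},P^0)\xrightarrow{\sim}R\Hom_\bz(R\Gamma_W(\overline{S},P^0),\bz[-1])\]
is inherited from the Artin-Verdier self-pairing of $R\Gamma_W(\overline{\X},\bz(1))$ (Corollary \ref{compatibleduality}), together with the observation that the triangle (\ref{trunc2}) is its own Verdier dual: the degree map $P\to\bz$ is paired with a canonical map $\bz\to P$ representing a degree-$\delta$ divisor class on the generic fiber, and $P^0$ sits self-dually in this pairing. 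The formula for $H^2_W$ is then forced by the universal coefficient sequence applied to $H^0_W=\Pic^0(\X)/\Pic(\co_F)$, whose $\Ext^1(-,\bz)$ is the Pontryagin dual of the torsion subgroup $\Pic^0(\X)_{tor}/\Pic(\co_F)$.

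The final assertion about $\Br(\overline{\X})$ is immediate from (\ref{brauerdef}): complex places contribute nothing since $\Br(\bC)=0$, while each real-place contribution $\Br(\X_{F_v})$ is $2$-torsion because $\Br(F_v)=\bz/2\bz$. The main technical obstacle is the careful Weil-\'etale bookkeeping: one must verify that the two triangles lift from the \'etale to the Weil-\'etale world so that exactly a $\Hom_\bz(\Pic^0(\X),\bz)$-summand in degree one is added, and that the inherited duality has precisely the shift $[-1]$ appropriate for the relative $H^1$ of a curve, rather than the $[-3]$ or $[-5]$ natural from the absolute dimensions of $\overline{S}$ or $\overline{\X}$.
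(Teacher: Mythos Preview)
Your overall strategy matches the paper's: compute the \'etale cohomology of $P^0$ on $\overline{S}$ via the long exact sequences of (\ref{trunc1}) and (\ref{trunc2}), identify the torsion duality, and then pass to the Weil-\'etale modification. The computations of $H^0(\overline{S},P)$, $H^0(\overline{S},P^0)$, and the two $\bz/\delta\bz$ factors are correct in outline, and the paper does exactly this (the identification of the $\bz/\delta\bz$ in $H^1(\overline{S},P)$ is obtained not via a Hasse-invariant interpretation but by showing, using Corollary \ref{compatibleduality}, that the map $H^4(\overline{S},\bz(1))\to H^4(\overline{\X},\bz(1))$ is dual to $\deg:\Pic(\X)\to\bz$).

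Where your sketch diverges, and where there is a genuine imprecision, is the duality argument. You assert that triangle (\ref{trunc2}) is \emph{its own} Verdier dual, with $\deg:P\to\bz$ paired against ``a canonical map $\bz\to P$ representing a degree-$\delta$ divisor class.'' This is not what happens, and no such map is used. The paper introduces an auxiliary complex $Q^{\overline{S}}$ via (\ref{trunc3}), the fibre of the trace map $R\overline{f}_*\bz(1)^{\overline{\X}}\xrightarrow{\trace_f^0}\bz^{\overline{S}}[-2]$, together with a triangle (\ref{trunc4}) of the form $\bz(1)^{\overline{S}}\to Q^{\overline{S}}\to P^{0,\overline{S}}[-2]$. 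The torsion duality $\mathbf{AV}(\overline{S},P^0)$ is then obtained from diagram (\ref{avp0}), which exhibits (\ref{trunc4}) as dual to the triangle $P^{0,\overline{S}}\to P^{\overline{S}}\to\bz^{\overline{S}}$ (the $\overline{S}$-extension of (\ref{trunc2})). In other words, the pair dual to $(P,\bz)$ is $(Q,\bz(1))$, not $(P,\bz)$ itself; the self-duality of $P^0$ emerges because it is the common cofiber of both triangles. The key input making this commute is Corollary \ref{compatibleduality}, which says precisely that $f^*$ and $\trace_f^0$ are exchanged under Artin--Verdier duality. Without this mechanism there is no evident reason for the pairing you describe to exist or to have the right nondegeneracy properties, so this step needs to be reworked along the paper's lines.
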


\begin{proof}  According to their definition in \cite{Flach-Morin-16}[App. A] the Artin-Verdier \'etale topoi of $\X$ and $S$ fit into a commutative diagram of morphisms of topoi
\begin{equation}\begin{CD} \X_{et} @>\phi>> \overline{\X}_{et} @<u_\infty << \X_\infty @<\pi << Sh(G_\br,\X(\bc))\\
@VfVV @V\overline{f}VV @Vf_\infty VV @Vf_\bc VV\\
S_{et} @>\phi^S>> \overline{S}_{et} @<u^S_\infty << S_\infty @<\pi^S << Sh(G_\br,S(\bc)).
\end{CD}\label{avdef}\end{equation}
Applying $R\overline{f}_*$ to the defining exact triangle \cite{Flach-Morin-16}[App. A, Cor. 6.8] of the complex $\bz(1)^{\overline{\X}}$ we obtain a commutative diagram of exact triangles
\begin{equation}\begin{CD} \bz(1)^{\overline{S}}@>>> R\phi^S_*\bz(1) @>>> u^S_{\infty,*}\tau^{>1}R\hat{\pi}^S_*(2\pi i\bz)\\
@VVV @VVV @VVV\\
R\overline{f}_*\bz(1)^{\overline{\X}}@>>> R\phi^S_*Rf_*\bz(1) @>>> u^S_{\infty,*}Rf_{\infty,*}\tau^{>1}R\hat{\pi}_*(2\pi i\bz)\\
@VVV @VVV @VVV\\
P^{\overline{S}}[-2]@>>> R\phi^S_*P[-2]@>>> u^S_{\infty,*}K[-2]
\end{CD}\label{sbarcomp}\end{equation}
where the middle vertical triangle is induced by (\ref{trunc1}) and the left, resp. right, vertical triangle defines $P^{\overline{S}}$, resp. $K$. Since $R\hat{\pi}_*(2\pi i\bz)$, resp. $R\hat{\pi}^S_*(2\pi i\bz)$, is a complex of sheaves supported on $\X(\br)$, resp. $S(\br)$, with stalk $H^j(G_\br,2\pi i\bz)=\bz/2\bz$ in odd degrees $j$, and vanishing in even degrees, it follows that the top two complexes in the right hand column are supported in degrees $\geq 3$. This gives exactness of the columns in the diagram
\[\minCDarrowwidth1em\begin{CD} {} @. {}@. {}@.{}@. 0 @. 0\\
@. @. @. @. @VVV @VVV\\
0 @>>> H^2(\overline{S},\bz(1)) @>>> H^2(\overline{\X},\bz(1))@>>> H^0(\overline{S},P)@>>> H^3(\overline{S},\bz(1)) @>>>  H^3(\overline{\X},\bz(1))\\
@. @V\cong VV @V\cong VV @VVV @VVV @VVV\\
 0 @>>> \Pic(\co_F) @>>> \Pic(\X) @>>> H^0(S,P)@>>>\Br(\co_F) @>>> \Br(\X)\\
@. @. @. @. @VVV @VVV\\
  {}@.{}@.{}@.{}@. \bigoplus\limits_{\text{$v$ real}}\Br(F_v) @>>> \bigoplus\limits_{\text{$v$ real}}\Br(\X_{F_v})
\end{CD}\]
which is the map of long exact cohomology sequences induced by the left two columns in (\ref{sbarcomp}). Here we use the isomorphism (see the proof of Lemma \ref{pbar} below)
\[ H^3(\X_\infty, \tau^{>1}R\hat{\pi}_*(2\pi i\bz))\cong H^3(G_\br,\X(\bc),(2\pi i)\bz)\cong H^3(\X_{\br,et},\bz(1))\cong \bigoplus\limits_{\text{$v$ real}}\Br(\X_{F_v})\]
and similarly for $S_\infty$. We deduce that $\Br(\overline{S}):=H^3(\overline{S},\bz(1))=0$, hence
\begin{equation} H^0(\overline{S},P)\cong \Pic(\X)/\Pic(\co_F)\label{h0comp}\end{equation}
and that $H^3(\overline{\X},\bz(1))$ coincides with the group $\Br(\overline{\X})$ defined in (\ref{brauerdef}). The continuation of the top long exact sequence gives
\[\minCDarrowwidth1em\begin{CD} 0 @>>> H^3(\overline{\X},\bz(1))@>>> H^1(\overline{S},P)@>>> H^4(\overline{S},\bz(1))@>>> H^4(\overline{\X},\bz(1))\\
@. @. @. @V\cong VV @V\cong VV\\
{}@. {}@.  @. H^0(\overline{S},\bz)^* @>\deg^*>> H^2(\overline{\X},\bz(1))^*
\end{CD}\]
where the vertical duality isomorphisms follow from \cite{Flach-Morin-16}[Prop. 3.4] for both $S$ and $\X$, taking into account the compatibility of dualities in Corollary \ref{compatibleduality}. Since we have a commutative diagram
\[\begin{CD} \Pic(\X) @>\deg >> \bz\\ @VVV \Vert@. \\ \Pic(\X_F) @>\deg >> \bz \end{CD}\]
with surjective vertical map, the cokernel of both degree maps is $\bz/\delta\bz$ where $\delta$ is the g.c.d. of the degrees of all divisors on $\X_F$, i.e. the index of $\X_F$. Hence an exact sequence
\begin{equation} 0\to\Br(\overline{\X})\to H^1(\overline{S},P)\to\bz/\delta\bz\to 0.\label{h1comp}\end{equation}
We can similarly extend (\ref{trunc2}) to the Artin-Verdier compactification. The composite map
\[ (2\pi i\bz)\to Rf_{\bc,*}(2\pi i \bz)\to R^2f_{\bc,*}(2\pi i \bz)[-2] \xrightarrow{\deg} \bz[-2]\]
in $Sh(G_\br,S(\bc))$ clearly vanishes and the commutativity of the right hand square in (\ref{avdef}) then shows that the degree map on the middle row of (\ref{sbarcomp}) factors through the lower row, i.e. we obtain a commutative diagram with exact rows and columns
\begin{equation}\begin{CD}
P^{0,\overline{S}}@>>> R\phi^S_*P^0@>>> u^S_{\infty,*}K^0\\
@VVV @VVV @VVV\\
P^{\overline{S}}@>>> R\phi^S_*P@>>> u^S_{\infty,*}K\\
@VVV @VVV @VVV\\
\bz^{\overline{S}}@>>> R\phi^S_*\bz @>>> u^S_{\infty,*}\tau^{>0}R\hat{\pi}^S_*\bz\\
\end{CD}\label{sbarcomp2}\end{equation}
where the lower row is the defining exact triangle \cite{Flach-Morin-16}[App. A, Cor. 6.8] of the complex $\bz^{\overline{S}}$, and the left hand column defines $P^{0,\overline{S}}$. Note that in fact $\bz^{\overline{S}}=\bz$. The long exact cohomology sequence of the left hand column is
\begin{equation} 0\to H^0(\overline{S},P^0) \to H^0(\overline{S},P) \xrightarrow{\deg} \bz \to H^1(\overline{S},P^0) \to H^1(\overline{S},P)\to 0   \notag\end{equation}
since $H^1(\overline{S},\bz)=H^1(S,\bz)=0$. From (\ref{h0comp}) we obtain
\begin{equation} H^0(\overline{S},P^0)\cong \Pic^0(\X)/\Pic(\co_F)\label{h00comp}\end{equation}
and from (\ref{h1comp}) that $H^1(\overline{S},P^0)$ has cardinality $\#\Br(\overline{\X})\delta^2$.

In order to compute $H^i(\overline{S},P^0)$ in degrees $\geq 2$ we prove a torsion duality for $P^0/p^\nu$ for any prime $p$, the isomorphism $\mathbf{AV}(\overline{S},P^0)$ in diagram (\ref{avp0}) below. Following the proof of \cite{Flach-Morin-16}[Prop. 3.4] this then implies $$H^i(\overline{S},P^0)\cong H^{2-i}(\overline{S},P^0)^*$$ for $i\geq 2$, in particular $H^i(\overline{S},P^0)=0$ for $i\geq 3$. Following \cite{Flach-Morin-16}[Def. 3.6] we then define $R\Gamma_W(\overline{S},P^0)$ by an exact triangle
\[ R\Hom(R\Gamma_W(\overline{S},P^0),\bq[-2])\to R\Gamma(\overline{S},P^0)\to R\Gamma_W(\overline{S},P^0)\to \]
and obtain the cohomology groups described in Lemma \ref{h1br}. Conjecture ${\bf L}(\mathcal{X}_{et},1)$ entering into the definition of Weil-\'etale modifications in \cite{Flach-Morin-16}[Sec. 3] reduces to finiteness of $\Br(\X)=H^3(\X_{et},\bz(1))$ and finite generation of $\Pic(\X)$ which is well known.

Fix a prime number $p$ and consider the following commutative diagram with exact columns.
\begin{equation} \xymatrix@C=.07in{
& R\Gamma(\overline{S},\bz(1)/p^\nu)\ar[dl]\ar[rr]^{\mathbf{AV}(\overline{S},1)}\ar'[d][dd] & &R\Gamma(\overline{S},\bz/p^\nu)^*[-3]\ar[dl]\ar[dd]\\
R\Gamma(S,\bz(1)/p^\nu)\ar[dd]\ar[rr]^{\mathbf{AV}(S,1) } & &\hat{R}\Gamma_c(S,\bz/p^\nu)^*[-3]\ar[dd] &{}\\
& R\Gamma(\overline{\X},\bz(1)/p^\nu)\ar[dl]\ar'[d][dd]\ar'[r]^(.8){\mathbf{AV}(\overline{\mathcal{X}},1)}[rr] & &R\Gamma(\overline{\X},\bz(1)/p^\nu)^*[-5]\ar[dl]\ar[dd]\\
R\Gamma(\X,\bz(1)/p^\nu)\ar[dd]\ar[rr]^{\mathbf{AV}(\mathcal{X},1)} & &\hat{R}\Gamma_c(\X,\bz(1)/p^\nu)^*[-5]\ar[dd] &{}\\
& R\Gamma(\overline{S},P/p^\nu)[-2]\ar[dl]\ar'[r]^(.8){\mathbf{AV}(\overline{S},P)}[rr] & &R\Gamma(\overline{S},Q/p^\nu)^*[-5]\ar[dl]\\
R\Gamma(S,P/p^\nu)[-2]\ar[rr]^{\mathbf{AV}(S,P) } & &\hat{R}\Gamma_c(S,Q/p^\nu)^*[-5] &{}
}
\label{keydia}\end{equation}
The top left commutative square is induced by the top left square in (\ref{sbarcomp}) after applying $-\otimes^L\bz/p^\nu\bz$. The top and middle commutative square are those in the proof of \cite{Flach-Morin-16}[Thm. 6.24] for $S$ and $\X$, respectively. The front commutative diagram of exact triangles arises from Corollary \ref{compatibleduality} where we define the complex $Q^{\overline{S}}$ on $\overline{S}$ by the exact triangle
\begin{equation}  Q^{\overline{S}}\to R\overline{f}_*\bz(1)^{\overline{\X}}\xrightarrow{\deg}\bz^{\overline{S}}[-2]\to    \label{trunc3} \end{equation}
and denote by $Q$ its restriction to $S$. Note that there is also an exact triangle
\begin{equation}  \bz(1)^{\overline{S}}\to Q^{\overline{S}} \to P^{0,\overline{S}}[-2]\to.    \label{trunc4} \end{equation}
It follows then from Corollary \ref{compatibleduality} and \cite{Flach-Morin-16}[Thm. 6.24] that all arrows labeled $\mathbf{AV}$ in (\ref{keydia}) are quasi-isomorphisms. We finally obtain an isomorphism of exact triangles
\begin{equation}\begin{CD}
R\Gamma(\overline{S},\bz(1)/p^\nu) @>>> R\Gamma(\overline{S},Q/p^\nu) @>>> R\Gamma(\overline{S},P^0/p^\nu)[-2]\\
@V \mathbf{AV}(\overline{S},1) VV  @V {\mathbf{AV}(\overline{S},P)^*[-5]}VV @V{\mathbf{AV}(\overline{S},P^0)[-2]} VV\\
R\Gamma(\overline{S},\bz/p^\nu)^*[-3] @>>> R\Gamma(\overline{S},P/p^\nu)^*[-3] @>>> R\Gamma(\overline{S},P^0/p^\nu)^*[-3]
\end{CD}\label{avp0}\end{equation}
where the top, resp. bottom, row is induced by (\ref{trunc4}), resp. the left hand column in (\ref{sbarcomp2}), and the commutativity of the left square again follows from  Corollary \ref{compatibleduality}. This concludes the proof of Lemma \ref{h1br}.
\end{proof}

The $H^1$-part of the complex $R\Gamma_W(\mathcal{X}_{\infty},\mathbb{Z}(1))$ defined in \cite{Flach-Morin-16}[Def. 3.23] is specified by two exact triangles
\begin{equation} R\Gamma_W(S_{\infty},\mathbb{Z}(1))\to R\Gamma_W(\mathcal{X}_{\infty},\mathbb{Z}(1))\to R\Gamma_W(S_\infty, P)[-2]\label{tinfty1}\end{equation}
and
\begin{equation}R\Gamma_W(S_\infty, P^0) \to R\Gamma_W(S_\infty, P)\to R\Gamma_W(S_{\infty},\mathbb{Z})\label{tinfty2}\end{equation}
and its cohomology is computed by the following Lemma.

\begin{lemma} One has
\begin{equation}
H^i_W(S_\infty, P^0)=\begin{cases} H^1(\mathcal{X}(\mathbb{C}),(2\pi i)\mathbb{Z})^{G_\br} & i=-1\\
 \bigoplus\limits_{\text{$v$ real, $\X(F_v)\neq\emptyset$}}\Phi_v & i=0\\
\bigoplus\limits_{\text{$v$ real, $\X(F_v)=\emptyset$}}\bz/2\bz & i=1\\
0 & i\neq -1,0,1\end{cases}
\label{pinftycomp}\end{equation}
where $\Phi_v=J_F(F_v)/J_F(F_v)^0$ is the component group as defined in the introduction. Moreover
\begin{equation}\frac{\#H^0_W(S_\infty, P^0)}{\#H^1_W(S_\infty, P^0)}=\prod\limits_{\text{$v$ real}}\frac{\#\Phi_v}{\delta_v'\delta_v}.\label{periodindex}\end{equation}
\label{pinftylemma}\end{lemma}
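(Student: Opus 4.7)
The plan is to compute $R\Gamma_W(S_\infty, P^0)$ from the two exact triangles (\ref{tinfty1}) and (\ref{tinfty2}), using the explicit Tate-cohomological descriptions of $R\Gamma_W(\X_\infty, \bz(1))$, $R\Gamma_W(S_\infty, \bz(1))$, and $R\Gamma_W(S_\infty, \bz)$ recalled from \cite{Flach-Morin-16}. Each of these decomposes as a direct sum over the real places $v$ of $F$ of Tate cohomology complexes for $G_\br$; complex places drop out because the corresponding $G_\br$-modules are induced and therefore have trivial Tate cohomology. At each real $v$ the building blocks are $\hat{H}^i(G_\br, H^j(\X(\bc)_v, (2\pi i)\bz))$ for $j = 0, 1, 2$, together with $\hat{H}^i(G_\br, (2\pi i)\bz)$ and $\hat{H}^i(G_\br, \bz)$. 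A central input is that $H^2(\X(\bc)_v, (2\pi i)\bz) \cong \bz$ with \emph{trivial} $G_\br$-action, since conjugation reverses orientation on the real surface $\X(\bc)_v$ and the resulting sign cancels the sign on $(2\pi i)\bz$.

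First I would use (\ref{tinfty1}) to pin down $R\Gamma_W(S_\infty, P)$: the $j = 0$ part of $R\Gamma_W(\X_\infty, \bz(1))$ cancels $R\Gamma_W(S_\infty, \bz(1))$ (using that $\X$ has geometrically connected fibres by $f_*\co_\X = \co_S$), so $R\Gamma_W(S_\infty, P)[-2]$ is computed by the Tate cohomology of $H^1$ and $H^2$ of $\X(\bc)_v$ alone. Inserting this into (\ref{tinfty2}), the degree map to $R\Gamma_W(S_\infty, \bz)$ factors through the $H^2$-contribution via the canonical isomorphism $H^2(\X(\bc)_v, (2\pi i)\bz) \cong \bz$. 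A chase of the resulting long exact sequence produces $H^{-1}_W(S_\infty, P^0) = H^1(\X(\bc), 2\pi i\bz)^{G_\br}$ as the $G_\br$-invariants arising from the $H^1$ part. At each real $v$, $H^0_W(S_\infty, P^0)$ is the cokernel of a norm map on $H^1(\X(\bc)_v, 2\pi i\bz)$, which I would identify with $\hat{H}^0(G_\br, J(\bc)) = J(\br)/\Norm J(\bc) = \pi_0(J(\br)) = \Phi_v$ via the exponential sequence for the real Lie group $J(\br)$. This contribution is present exactly when $\X(F_v) \neq \emptyset$; when $\X(F_v) = \emptyset$, the degree map instead has cokernel $\bz/2\bz$ in $\bz$ (since $\delta_v = 2$), which shifts to a $\bz/2\bz$ summand in $H^1_W(S_\infty, P^0)$.

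For the product formula (\ref{periodindex}), one verifies the equality place by place. At $v$ with $\X(F_v) \neq \emptyset$, $\delta_v = \delta_v' = 1$ and the contribution $\#\Phi_v$ to $\#H^0_W/\#H^1_W$ matches $\#\Phi_v/(\delta_v'\delta_v)$ immediately. At $v$ with $\X(F_v) = \emptyset$, $\delta_v = 2$ and the computation above gives a factor $1/2$, which equals $\#\Phi_v/(2\delta_v')$ precisely when $\delta_v' = \#\Phi_v$. The main obstacle is establishing this last identity: it amounts to a real-place period-index principle asserting that for a curve over $\br$ with no real points the period $\delta_v' \in \{1, 2\}$ equals the order of the component group of its Jacobian's real points. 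This follows from the classification of real Jacobians (Gross--Harris), or can be derived from the Kummer sequence on $J_{F_v}$ together with local duality on the curve $\X_{F_v}$; either way, unwinding how $\delta_v'$ emerges from the degree map in our Tate-cohomology set-up is where the final step demands the most bookkeeping.
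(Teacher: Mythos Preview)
Your overall strategy---decompose over archimedean places and read off the answer from the two triangles (\ref{tinfty1}) and (\ref{tinfty2}) together with the exponential sequence identification $H^1(G_\br,H^1(\X(\bc)_v,2\pi i\bz))\cong\hat H^0(G_\br,J(\bc))\cong\Phi_v$---is exactly the paper's. The product formula argument is also the same: both you and the paper reduce the empty-real-point case to the identity $\delta_v'=\#\Phi_v$, which the paper settles by quoting Gross--Harris for $\#\Phi_v$ and Lichtenbaum/Poonen--Stoll for $\delta_v'$, both governed by the parity of the genus.

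There is, however, a genuine gap in your treatment of a real place $v$ with $\X(F_v)=\emptyset$. You assert that the $\Phi_v$-type contribution to $H^0_W(S_\infty,P^0)_v$ is ``present exactly when $\X(F_v)\neq\emptyset$'', but your only stated reason is that the degree map now has cokernel $\bz/2\bz$; that explains the $\bz/2\bz$ in $H^1_W$ but says nothing about why $H^0_W$ vanishes. The point is that when there is no section the filtration on $R\Gamma_W(\X_\infty,\bz(1))_v$ by the $H^j(\X(\bc)_v,2\pi i\bz)$ does \emph{not} split, so the ``$H^1$-piece'' $H^1(G_\br,H^1)$ need not survive as a summand of $H^2_W(\X_\infty,\bz(1))_v\cong H^0_W(S_\infty,P)_v$. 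The paper handles this by a spectral-sequence analysis: it first bounds $R\Gamma_W(\X_\infty,\bz(1))$ in degrees $\leq 2$ via a cohomological-dimension argument on the $2$-manifold $\X_\infty$, and then uses this vanishing in degree $\geq 3$ (together with $\#H^i(G_\br,H^1)\in\{1,2\}$ from Gross--Harris) to force the differentials that kill $E_2^{1,1}=H^1(G_\br,H^1)$ and cut $E_2^{0,2}=\bz$ down to $\ker(\bz\twoheadrightarrow\bz/2\bz)$. Only after this does one get $H^0_W(S_\infty,P)_v\cong 2\bz$ mapping by inclusion to $\bz$, hence $H^0_W(S_\infty,P^0)_v=0$. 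You should insert this step; without it the claimed vanishing of $H^0_W$ at such $v$ is unjustified.

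A minor point: what you call ``the cokernel of a norm map on $H^1(\X(\bc)_v,2\pi i\bz)$'' is $\hat H^0(G_\br,H^1)$, whereas the group that actually appears is $H^1(G_\br,H^1)$. Your subsequent identification with $\hat H^0(G_\br,J(\bc))$ is nonetheless correct, since the exponential sequence shifts the index by one.
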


\begin{proof} The topological space $\mathcal{X}_{\infty}=\X(\bc)/G_\br$ is a $2$-manifold (with boundary $\X(\br)$) and hence has sheaf-cohomological dimension $\leq 2$. The complex $$i_\infty^*\bz(1):=\tau^{\leq 1}R\pi_*(2\pi i)\bz$$ of \cite{Flach-Morin-16}[Def. 3.23] is concentrated in degrees $0$ and $1$ and $R^1\pi_*(2\pi i)\bz$ is supported on the closed subset $\X(\br)$, a union of circles, hence of cohomological dimension $1$. It follows that $R\Gamma_W(\mathcal{X}_{\infty},\mathbb{Z}(1))$ is concentrated in degrees $\leq 2$.

The two triangles (\ref{tinfty1}) and (\ref{tinfty2}) are direct sums over the infinite places $v\in S_\infty$ and we denote the respective direct summands by an index $v$. If $v$ is a complex place or a real place with $\X(F_v)\neq\emptyset$ then $f_{\infty,v}$ has a section and the
exact triangle \cite{Flach-Morin-16}[(47)]
\begin{equation}\label{inftydefining}
R\Gamma_W(\mathcal{X}_{\infty},\mathbb{Z}(1))_v\to R\Gamma(G_{\mathbb{R}},\mathcal{X}(\mathbb{C}),(2\pi i)\mathbb{Z})_v
\to R\Gamma(\mathcal{X}(\mathbb{R}),\tau^{>1}R\widehat{\pi}_*(2\pi i)\mathbb{Z})_v
\end{equation}
splits into a direct sum of its $H^i$-parts for $i=0,1,2$. The last term being concentrated in degrees $\geq 3$ we find
\begin{equation}\notag
R\Gamma_W(S_\infty, P^0)_v[-2]\cong \tau^{\leq 2}R\Gamma(G_\br,H^1(\mathcal{X}(F_v\otimes_\br\mathbb{C}),(2\pi i)\mathbb{Z})[-1])
\end{equation}
for the $H^1$-part. The group $H^1(G_\br,H^1(\mathcal{X}(F_v\otimes_\br\mathbb{C}),(2\pi i)\mathbb{Z}))$ is isomorphic to $\Phi_v$
as can be verified easily by taking $G_\br$-cohomology of the exponential sequence for the abelian variety $J_F(F_v\otimes_\br\mathbb{C})$.

If $v$ is a real place with $\X(F_v)=\emptyset$ an analysis of the spectral sequence
\[ H^p(G_{\mathbb{R}},H^q(\mathcal{X}(F_v\otimes_\br\mathbb{C}),(2\pi i)\mathbb{Z}))\Rightarrow H^{p+q}(G_{\mathbb{R}},\mathcal{X}(F_v\otimes_\br\mathbb{C}),(2\pi i)\mathbb{Z})\]
reveals that
\[ H^i_W(S_\infty, P)_v=\begin{cases}H^1(\mathcal{X}(F_v\otimes_\br\mathbb{C}),(2\pi i)\mathbb{Z})^{G_\br} & i=-1\\
 \ker\left(H^2(\mathcal{X}(F_v\otimes_\br\mathbb{C}),(2\pi i)\mathbb{Z})\cong\bz\twoheadrightarrow\bz/2\bz\right) & i=0. \end{cases}\]
Here one uses the fact that the end term is concentrated in degrees $\leq 2$ and that for $i\geq 1$
\[\#\Phi_v=\#H^i(G_\br,H^1(\mathcal{X}(F_v\otimes_\br\mathbb{C}),(2\pi i)\mathbb{Z}))\in\{1,2\}\]
by \cite{grossharris}[Prop. 3.3, Prop. 1.3]. The long exact sequence associated to (\ref{tinfty2}) then gives the computation (\ref{pinftycomp}).

To show (\ref{periodindex}) first note that $\X(F_v)=\emptyset$ if and only if $\delta_v=2$, otherwise $\delta_v=1$. Since $\delta_v'\mid\delta_v$ this shows (\ref{periodindex}) at places where $\X(F_v)\neq\emptyset$. If $\X(F_v)=\emptyset$ one has $\#\Phi_v=1$ or $2$ according to whether the genus $g$ of $\X$ is even or odd by \cite{grossharris}[Prop. 3.3] and one also has $\delta_v'=1$ or $2$ according to whether $g$ is even or odd by \cite{Lichtenbaum69}, \cite{PoonenStoll}[p. 1126]. This shows (\ref{periodindex}) at places where $\X(F_v)=\emptyset$.
\end{proof}

The complex $R\Gamma_{W,c}(S,P^0)$ is defined by an exact triangle
\[ R\Gamma_{W,c}(S,P^0)\to R\Gamma_W(\overline{S},P^0)\to R\Gamma_W(S_\infty,P^0)\to\]
and its cohomology, or at least the determinant of its cohomology, can easily be computed by combining Lemmas \ref{h1br} and  \ref{pinftylemma}. The $H^1$-part of the exact triangle (\ref{tangenttri}) is an exact triangle
\begin{equation}
H^1(\X,\co_\X)_\br[-3] \to \Pic^0(\X)_\br[-2]\oplus\Pic^0(\X)_\br[-3]\to R\Gamma_{W,c}(S,P^0)_\br[-2]\to.
\notag\end{equation}
The fundamental line of the $H^1$-part
\[ \Delta(S/\bz,P^0):={\det}_\bz R\Gamma_{W,c}(S,P^0)\otimes_\bz {\det}_\bz^{-1} H^1(\X,\co_\X)\]
is then trivialized using the period isomorphism
\begin{equation} H^1(\mathcal{X}(\mathbb{C}),(2\pi i)\mathbb{Z})^{G_\br}_\br\cong H^1(\X,\co_\X)_\br\label{period}\end{equation}
on $\X(\bc)$ and the intersection pairing
\begin{equation} \Pic^0(\X)_\br\cong \Hom_\bz(\Pic^0(\X),\br).\label{arakelovpairing}\end{equation}
One easily derives formula (\ref{BSDforX}) in the introduction. Note here that the determinant $\Omega(\X)$ of (\ref{period}) with respect to the natural $\bz$-structures on both sides includes possible torsion in $H^1(\X,\co_\X)$. This torsion subgroup is nontrivial if and only if $f$ is not cohomologically flat in dimension $0$.

Although it seems eminently plausible, we were not able to show that the pairing induced on
$$\Pic^0(\X)_\br=\im(H^2_c(\X,\br(1))\to H^2(\X,\br(1)))$$
by the pairing \cite{Flach-Morin-16}[Prop. 2.1]
$$ H^2(\X,\br(1))\otimes H^2_c(\X,\br(1))\to H^4_c(\X,\br(2))\to\br$$
coincides with the Arakelov intersection pairing, discussed for example in \cite{Hriljac}. We shall however assume this from now on, or equivalently, we simple restate our conjecture using the Arakelov intersection pairing. Since the Arakelov intersection pairing is negative definite on $\Pic^0(\X)_\br$ \cite{Hriljac}[Thm.3.4, Prop.3.3], hence nondegenerate, the map (\ref{arakelovpairing}) is indeed an isomorphism.

\section{Comparison to the Birch and Swinnerton-Dyer Conjecture}\label{sec:comparison}

The comparison of our conjecture (\ref{BSDforX}) with the Birch and Swinnerton-Dyer formula involves two key ingredients. The first is a precise formula relating the cardinalities of $\Br(\X)$ and $\Sha(J_F)$ and the second a Lemma about the behavior of the Arakelov intersection pairing in exact sequences, Lemma \ref{innerprod} below. The essential ideas for the comparison of $\Br(\X)$ and $\Sha(J_F)$ can already be found in Grothendieck's article \cite{grothbrauer} and his results imply that $\Br(\X)\cong\Sha(J_F)$ if, for example, $f$ has a section and $F$ is totally imaginary. The general case is considerably more complicated and has been studied by a number of authors until it was recently settled by Geisser \cite{Geisser17}. Unfortunately, Geisser's formula still has the condition that $F$ is totally imaginary, so we give here a generalization of his result without this condition and with a different proof. What makes both proofs eventually possible are the duality results of Saito in \cite{Saito89}.

For any place $v$ of $F$ we denote by $\delta_v$, resp. $\delta'_v$, the index, resp. period of $\X_{F_v}$ over $F_v$, i.e. the cardinalities of the cokernel of $\Pic(\X_{F_v})\xrightarrow{\deg}\bz$, resp. $P(F_v)\xrightarrow{\deg}\bz$.
%If $v$ is nonarchimedean we denote by $d_v$ the index of $\X_{F^{ur}_v}$ over $F^{ur}_v$, where $F^{ur}_v/F_v$ is the maximal unramified extension, and if $v$ is archimedean we set $d_v=\delta_v'$.
Then one has
\begin{equation} \delta_v'\mid\delta_v\mid\delta
\notag\end{equation}
and $\delta_v/\delta_v'\in\{1,2\}$ for all places $v$ \cite{PoonenStoll}[p.1126]. We define
\begin{align} \alpha:=&\#\cok\left(\Pic^0(\X_F)\to J_F(F)\right)\notag\\
  =&\#\cok\left(H^0(\overline{S},P^0)\to J_F(F)\right) \label{alphadef}\end{align}
where the equality holds since $H^0(\overline{S},P^0)=\Pic^0(\X)/\Pic(\co_F)\to\Pic^0(\X_F)$ is surjective.
\begin{prop} If $\Br(\X)$ is finite then
\begin{equation} \#\Br(\overline{\X})\delta^2 = \frac{\prod_v\delta_v'\delta_v}{\alpha^2} \#\Sha(J_F)  \label{Geisserformula}\end{equation}
where the product is over all places $v$ of $F$ and $\Br(\overline{\X})$ was defined in Lemma \ref{h1br}.
\label{Geisserprop}\end{prop}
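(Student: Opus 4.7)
My plan is to compute $\#H^1(\overline{S}, P^0)$ in two ways. The first was already carried out in Lemma \ref{h1br}, where it is identified with $\#\Br(\overline{\X})\delta^2$. For the second, I would compare $P^0$ on $\overline{S}$ with the Tate--Shafarevich setup for $J_F$, extracting $\#\Sha(J_F)$ together with the local correction factors.

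The key observation is that $P^0$ restricted to the generic point $\eta = \Spec F$ is the Jacobian $J_F$, so there is a natural map $P^0 \to j_* J_F$ with $j:\eta \to S$. First I would extend this to an exact triangle on $\overline{S}$ whose cone is supported at closed points of $S$ together with archimedean places. At each finite place $v$, the stalk of this cone measures the difference between the relative Picard functor of $\X_{\co_{F,v}^{sh}}$ and the N\'eron model of $J_F$ at $v$; using the exact triangle (\ref{trunc2}), the local description of the fibers of $f$, and Saito's local duality from \cite{Saito89} (which already underlies the proof of Proposition \ref{avprop}), its cardinality can be expressed in terms of $\delta'_v$, $\delta_v$, and local component-group data. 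At archimedean places the relevant contribution has essentially been computed in Lemma \ref{pinftylemma}, and (\ref{periodindex}) converts the groups $\Phi_v$ into the archimedean factors of $\prod_v \delta'_v\delta_v$.

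Running the long exact sequence associated to this triangle and comparing with the defining localization sequence $\Sha(J_F) = \ker(H^1(F,J_F)\to\prod_v H^1(F_v,J_F))$ leads to an identity of the form
\[ \alpha^2 \cdot \#H^1(\overline{S},P^0) = \prod_v \delta'_v\delta_v \cdot \#\Sha(J_F). \]
One factor of $\alpha$ appears as the order of the cokernel of the boundary $H^0(\overline{S},P^0)\to J_F(F)$, by (\ref{alphadef}); the second factor of $\alpha$ arises via the self-duality $\mathbf{AV}(\overline{S},P^0)$ in (\ref{avp0}), which forces a symmetric contribution from $H^2(\overline{S},P^0)\cong H^0(\overline{S},P^0)^*$. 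Combining with Lemma \ref{h1br} yields (\ref{Geisserformula}).

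The main obstacle will be the careful local analysis at the finite places of bad reduction, where $P^0$ and the N\'eron model of $J_F$ genuinely disagree and where the individual values of $\delta'_v$, $\delta_v$ and the component groups must be matched to the cone stalks. A secondary subtlety is to reconcile the possibility $\X(F_v) = \emptyset$ at a real place (where $\delta_v = 2\delta'_v$) with the definition of $\Br(\overline{\X})$ in (\ref{brauerdef}) and with the $\bz/2\bz$ summands appearing in (\ref{pinftycomp}). It is precisely here that Saito's local duality supplies the input beyond Grothendieck's classical approach \cite{grothbrauer} and allows removal of Geisser's totally-imaginary restriction in \cite{Geisser17}.
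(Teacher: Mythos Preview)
Your outline is the paper's strategy: compare $P^{0,\overline S}$ with a complex $\tilde P^0$ whose $\mathcal H^0$ is $\eta_*J_F$ via a triangle with skyscraper cone $E$, run the resulting long exact sequence over $\overline S$ and over each $\overline S_v$ in parallel, and extract $\Sha(J_F)$, the first $\alpha$, and the local indices $\delta_v,\delta'_v$ from the diagram chase.

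One point needs sharpening. The second factor of $\alpha$ does not come from the global self-duality $H^2(\overline S,P^0)\cong H^0(\overline S,P^0)^*$; that isomorphism alone carries no information about the map to $J_F(F)$. In the paper it arises instead from duality over the open $U=S\setminus\Sigma_f$ where $f$ is smooth (so $P^0$ agrees with the N\'eron model there): one has $\hat H^2_c(U,P^0)\cong H^0(U,P^0)^*=J_F(F)^*$, and the localization map $H^2(\overline S,j_!j^*P^0)\to H^2(\overline S,P^0)$ appearing in the diagram chase is then dual to the restriction $H^0(\overline S,P^0)\to J_F(F)$, hence has kernel of order exactly $\alpha$. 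This is the step that genuinely needs the Artin--Verdier package from Proposition~\ref{avprop}. Two minor remarks: the local analysis at finite bad $v$ uses only $\delta_v$ and $\delta'_v$ (component groups do not enter this proposition), and the archimedean contribution is read off from $\mathcal H^1(E)\cong\bigoplus_{v\in S_\infty}\bz/\delta_v\bz$ rather than from Lemma~\ref{pinftylemma}.
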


\begin{remark} As in \cite{Geisser17}[Cor. 1.2] it follows that the cardinality of $\Br(\overline{\X})$ is a square if it is finite.
\end{remark}

\begin{proof} For each finite place $v$ of $F$ we denote by $S_v=\overline{S}_v$ the spectrum of the Henselization of $S$ at the closed point $v$ and by $F_v$ its field of fractions. If $v\in S_\infty$ is an infinite place of $F$ we denote by $F_v\subseteq \overline{F}$ the fixed field of a chosen decomposition group of $v$ in $\Gal(\overline{F}/F)$ and by $\overline{S}_v$ the local topos that is glued from $\Spec(F_v)_{et}$ and $\mathrm{Shv}(v)=\Set$. Then there is a morphism of topoi $\overline{S}_v\to\overline{S}_{et}$ and for a complex of sheaves $\F$ on $\overline{S}_{et}$ the group $H^i(\overline{S}_v,\F)=H^i(v,\F)=\mathcal H^i(\F)_v$ is the stalk of $\F$ at the point $v$ of the topos $\overline{S}_{et}$.

Let $\eta:\Spec(F)\to\overline{S}$ be the inclusion of the generic point and define a commutative diagram of complexes on $\overline{S}$ with exact rows and columns
\begin{equation}\begin{CD} E@>>> P^{0,\overline{S}} @>>> \tilde{P}^0\\
\Vert@. @VVV @VVV \\
E@>>>  P^{\overline{S}} @>>> \eta_*\eta^*P^{\overline{S}}\\
@. @V\deg VV @V\deg VV\\
{} @. \bz @=\bz
\end{CD}\label{genfibre}\end{equation}
Note that $\eta^*P^{0,\overline{S}}$ is the sheaf represented by the Jacobian $J_F$ and $\mathcal H^0(\tilde{P}^0)=\eta_*J_F$ the sheaf represented by the Neron model of $J_F$ over $S$.

\begin{lemma} The complex $E$ is a sum of skyscraper sheaves in degrees $0$ and $1$. One has
\[ \mathcal H^0(E)\cong \bigoplus_{v\in \Sigma_f}E_v\]
where $\Sigma_f$ is the set of finite places of $F$ where $f$ is not smooth, and
 \[\mathcal H^1(E)\cong \bigoplus_{v\in S_\infty} \bz/\delta_v\bz.\]
Moreover, for $v\in\Sigma_f$ one has $H^1(S_v,E_v)\cong \bz/\delta_v\bz$ and hence
\begin{equation} H^1(\overline{S},E)\cong \bigoplus\limits_{v\in\Sigma}\bz/\delta_v\bz \label{h1ecompute}\end{equation}
where $\Sigma=\Sigma_f\cup S_\infty$.
\label{elemma}\end{lemma}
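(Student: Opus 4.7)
The plan is to analyze $E$ place by place using the exact triangle $E \to P^{\overline{S}} \to \eta_*\eta^*P^{\overline{S}}$ (the middle row of (\ref{genfibre})) and then assemble the global cohomology via the spectral sequence for the resulting two-step complex of skyscraper sheaves. The overall claim is that $E$ has $\mathcal{H}^0$ supported on $\Sigma_f$ and $\mathcal{H}^1$ supported on $S_\infty$, with no other nonzero cohomology sheaf.

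For a finite place $v$, diagram (\ref{sbarcomp}) identifies $P^{\overline{S}}$ locally with $P=R^1f_*\bg_m$, whose stalk at a geometric point above $v$ is $\Pic(\X_{S_v^{sh}})$; the corresponding stalk of $\eta_*\eta^*P^{\overline{S}}$ is $\Pic(\X_{F_v^{sh}})$. Since $\X_{S_v^{sh}}$ is regular and proper with smooth generic fibre, divisor theory gives surjectivity of $\Pic(\X_{S_v^{sh}})\to\Pic(\X_{F_v^{sh}})$ with kernel generated by the classes of the irreducible components of the special fibre modulo the uniformizer divisor, i.e.\ by the relation $\sum_i m_{i,v}[C_{i,v}]=0$. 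Moreover the induced map on cokernels of the respective degree maps is an isomorphism, so $\mathcal{H}^1(E)_v=0$. For $v\notin\Sigma_f$ the special fibre is irreducible and reduced, the kernel collapses, and $E_v=0$. For $v\in\Sigma_f$ we obtain $\mathcal{H}^0(E)_v=E_v=\bz^{n_v}/\bz\cdot(m_{1,v},\dots,m_{n_v,v})$ as a Galois module for $\Gal(\bar\kappa(v)/\kappa(v))$, acting by permuting the geometric components $C_{i,v}$. At an infinite place $v$, I use the Artin-Verdier definition of $P^{\overline{S}}$ via the right column of (\ref{sbarcomp}) together with the topology of $\X(F_v\otimes_\br\bc)$ and its $G_\br$-action. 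A Tate-cohomology analysis parallel to the one behind Lemma \ref{pinftylemma}, using the period-index results of \cite{grossharris} and \cite{PoonenStoll}, gives $\mathcal{H}^0(E)_v=0$ and $\mathcal{H}^1(E)_v\cong\bz/\delta_v\bz$.

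Since $\mathcal{H}^0(E)$ and $\mathcal{H}^1(E)$ have disjoint support, the spectral sequence $H^p(\overline{S},\mathcal{H}^q(E))\Rightarrow H^{p+q}(\overline{S},E)$ degenerates and yields
\[
H^1(\overline{S},E)=\bigoplus_{v\in\Sigma_f}H^1(S_v,E_v)\oplus\bigoplus_{v\in S_\infty}\mathcal{H}^1(E)_v.
\]
The second summand is $\bigoplus_{v\in S_\infty}\bz/\delta_v\bz$ by the previous paragraph. For the first, $H^1(S_v,E_v)$ equals Galois cohomology of the residue field $\kappa(v)$ acting on $E_v$ by permutation of components, which I compute from the long exact sequence of $0\to\bz\to\bz^{n_v}\to E_v\to 0$ combined with Shapiro's lemma applied to the permutation module $\bz^{n_v}$. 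The resulting cyclic group is identified with $\bz/\delta_v\bz$ via the standard local formula expressing the index $\delta_v$ of $\X_{F_v}$ in terms of the multiplicities $m_{i,v}$ and the residue-field degrees $[\kappa(C_{i,v}):\kappa(v)]$. The main technical obstacle is precisely this last identification, which requires matching the output of the Galois cohomology computation with the explicit description of the local index; the archimedean computation and the structural description of $E$ itself are essentially bookkeeping with the Artin-Verdier definitions and divisor theory on regular arithmetic surfaces.
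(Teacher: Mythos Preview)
Your treatment of the finite places is the same as the paper's: both describe $E_v$ via the short exact sequence $0\to\bz\to\bigoplus_i\Ind^{G_{\kappa(v)}}_{G_{\kappa(v)_i}}\bz\to E_v\to 0$ coming from the irreducible components of the special fibre, and both obtain $H^1(S_v,E_v)\cong\bz/\gcd(r_{v,i}d_{v,i})\bz\cong\bz/\delta_v\bz$ (the paper cites \cite{Bosch-Liu} and \cite{Colliot-Saito} for the identification $\gcd(r_{v,i}d_{v,i})=\delta_v$, which is the ``standard local formula'' you allude to).

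The archimedean part is where the two arguments diverge, and where your outline has a gap. The paper does \emph{not} compute the stalk of $E$ at $v\in S_\infty$ by a Tate-cohomology analysis parallel to Lemma~\ref{pinftylemma}. Instead it first proves Lemma~\ref{pbar}, namely $\mathcal H^i(P^{\overline S})=0$ for $i\geq 1$; this is what forces $E$ into degrees $0$ and $1$ globally and gives the exact sequence $0\to\mathcal H^0(P^{\overline S})\to\eta_*\eta^*P^{\overline S}\to\mathcal H^1(E)\to 0$. It then observes that over the smooth locus one has $\tilde\eta_*\tilde\eta^*P=P$, so $u^{S,*}_\infty\eta_*\eta^*P^{\overline S}=u^{S,*}_\infty\phi^S_*P$, and the bottom row of (\ref{sbarcomp}) immediately yields
\[
u^{S,*}_\infty\mathcal H^1(E)\cong\mathcal H^2(K)\cong\ker\Bigl(\bigoplus_{v\,\text{real}}\Br(F_v)\to\bigoplus_{v\,\text{real}}\Br(\X_{F_v})\Bigr)\cong\bigoplus_{v\in S_\infty}\bz/\delta_v\bz.
\]
This Brauer-kernel identification is quick and sidesteps the case split $\X(F_v)=\emptyset$ versus $\X(F_v)\neq\emptyset$ that your proposed direct approach would require. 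More importantly, your plan never explains why $E$ has no cohomology in degrees $\geq 2$: this is not automatic from the triangle $E\to P^{\overline S}\to\eta_*\eta^*P^{\overline S}$, because $P^{\overline S}$ is defined as a mapping fibre in (\ref{sbarcomp}) and could a priori have higher cohomology sheaves at the archimedean points. Lemma~\ref{pbar} (whose proof is itself a nontrivial comparison of \'etale and equivariant Betti cohomology in degrees $\geq 3$) is precisely what rules this out, and some such input is needed before your spectral-sequence step is justified.
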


\begin{proof} The restriction of the middle row of (\ref{genfibre}) to $S$ is a short exact sequence of sheaves concentrated in degree $0$ and has been analyzed by Grothendieck \cite{grothbrauer}[(4.10 bis)]. The restriction of $E$ to $S$ is a sum over $v\in\Sigma_f$ of skyscraper sheaves $E_v$ placed in degree $0$. Viewing $E_v$ as a $G_{\kappa(v)}$-module there is an exact sequence
\begin{equation} 0\to \bz \to \sum_{i\in C_v}\Ind_{G_{\kappa(v)_i}}^{G_{\kappa(v)}}\bz\to E_v \to 0\label{evdef}\end{equation}
where $C_v$ is the set of irreducible components of the fibre $\X_{\kappa(v)}$ and $\kappa(v)_i$ denotes the algebraic closure of $\kappa(v)$ in the function field of the component $\X_{\kappa(v),i}$ indexed by $i\in C_v$. By \cite{grothbrauer}[(4.25)]
\[ H^1(\overline{S},E_v)\cong H^1(S_v,E_v)\cong \bz/\gcd(r_{v,i}d_{v,i})\bz\]
where $r_{v,i}=[\kappa(v)_i:\kappa(v)]$ and $d_{v,i}$ is the multiplicity of $\X_{\kappa(v),i}$ in the fibre. By \cite{Bosch-Liu}[Rem. 1.4] and \cite{Colliot-Saito}[Thm. 3.1] one has
\begin{equation}\gcd(r_{v,i}d_{v,i})=\delta_v.\label{liu-colliot-saito}\end{equation}
The middle row in (\ref{genfibre}), the fact that $\eta_*\eta^*P^{\overline{S}}$ is concentrated in degree $0$ and Lemma \ref{pbar} below imply $\mathcal H^i(E)=0$ for $i\geq 2$ and an exact sequence
\[ 0\to \mathcal H^0(P^{\overline{S}})\to\mathcal \eta_*\eta^*P^{\overline{S}}\to\mathcal H^1(E)\to 0.\]
We saw already that the restriction of $\mathcal H^1(E)$ to $S$ is zero. If
$$\Spec(F)\xrightarrow{\tilde{\eta}}U\xrightarrow{j} S\xrightarrow{\phi^S}\overline{S}$$
is an open subscheme over which $f$ is smooth we have $\tilde{\eta}_*\tilde{\eta}^*P=P$ and hence
\[u^{S,*}_\infty\eta_*\eta^*P^{\overline{S}}=u^{S,*}_\infty \phi^S_*j_*P=u^{S,*}_\infty\phi^S_*P.\]
It then follows from the bottom triangle in (\ref{sbarcomp}) that
$$u^{S,*}_\infty \mathcal H^1(E)\cong \mathcal H^2(K)\cong\ker\left( \bigoplus\limits_{v\in S_\infty}\Br(F_v) \to \bigoplus\limits_{v\in S_\infty}\Br(\X_{F_v})\right)\cong \bigoplus\limits_{v\in S_\infty}\bz/\delta_v\bz. $$
\end{proof}

\begin{lemma} For $i\geq 1$ one has
\begin{equation} \mathcal H^i(P^{\overline{S}})=0.\notag\end{equation}
\label{pbar}\end{lemma}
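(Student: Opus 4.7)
The plan is to show $\mathcal H^i(P^{\overline S}) = 0$ for $i \geq 1$ by computing stalks, using the exact triangle
\[ P^{\overline S} \to R\phi^S_* P \to u^S_{\infty,*}K \to \]
coming from the bottom row of (\ref{sbarcomp}). First I would observe that $u^S_{\infty,*}K$ is supported on $S_\infty$, so applying $\phi^{S,*}$ gives $P^{\overline S}|_S \cong R\phi^S_*P|_S = P$ concentrated in degree zero. This immediately handles every point of $S \subset \overline S$, and reduces the lemma to checking the stalks of $P^{\overline S}$ at each infinite place $v \in S_\infty$.

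For a complex place $v$, the decomposition group $G_{F_v}$ is trivial so $R^i\phi^S_*P$ has no contribution at $v$ for $i \geq 1$. Moreover, both $\tau^{>1}R\hat\pi^S_*(2\pi i\bz)$ and $Rf_{\infty,*}\tau^{>1}R\hat\pi_*(2\pi i\bz)$ are supported on real loci, so their stalks at $v$ vanish and $K_v = 0$. Hence $(P^{\overline S})_v$ is just $P_{\bar v}$ in degree zero, as required. The real case is the substantive one, and at such $v$ the required vanishing is equivalent to asking that the map $(R\phi^S_*P)_v \to (u^S_{\infty,*}K)_v$ be an isomorphism on $\mathcal H^i$ for $i \geq 1$ (surjectivity on $\mathcal H^0$ is automatic since, by the right column of (\ref{sbarcomp}), the complex $K$ is concentrated in degrees $\geq 1$).

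To establish the isomorphism at a real $v$, I would exploit the $3\times 3$ structure of (\ref{sbarcomp}) and apply the octahedral axiom: the top row describes $\bz(1)^{\overline S}$ as the Artin--Verdier modification that exchanges the higher Galois cohomology $H^{2k}(G_\br,\bc^\times) = \bz/2$ of $\bg_m$ (which one computes using Hilbert 90 and the 2-periodicity of the Tate cohomology of $G_\br$) for the parallel Tate groups $\hat H^{2k+1}(G_\br,(2\pi i)\bz) = \bz/2$, so the top-row comparison map is an isomorphism on $\mathcal H^i$ for $i \geq 3$ at real stalks. The middle row admits an analogous description after proper base change along $\overline f$ and the use of the corresponding computations for $\bz(1)^{\overline X}$ at points of $\X(\br)$. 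Transferring these isomorphisms across the columns then forces the bottom-row map to be an isomorphism on $\mathcal H^i$ for $i \geq 1$ at $v$, which yields the lemma.

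The main obstacle will be the careful bookkeeping in the real-place case: one must match, at each real $v$, the explicit Galois-cohomological description of $(R\phi^S_*P)_v = R\Gamma(G_\br,\Pic(\X_{\overline{F_v}}))$ with the topological description of $K_v$ coming from the spectral sequence $H^p(\X_v(\br),\hat H^q(G_\br,(2\pi i)\bz)) \Rightarrow \mathcal H^{p+q}(Rf_{\infty,*}\tau^{>1}R\hat\pi_*(2\pi i\bz))_v$, and verify the compatibility of the comparison maps induced by (\ref{sbarcomp}). Once this diagram-chase is in place, the lemma follows; the rest is formal consequence of Grothendieck's vanishing $R^if_*\bg_m = 0$ for $i \geq 2$ and the structure of the Artin--Verdier construction.
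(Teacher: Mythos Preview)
Your strategy is essentially the paper's: both arguments sit inside the $3\times 3$ diagram (\ref{sbarcomp}) and reduce the lemma to a comparison between \'etale and topological cohomology at archimedean places. The paper organizes this slightly differently---it uses the \emph{left column} first (reducing to $\mathcal H^i(R\overline f_*\bz(1)^{\overline\X})=0$ for $i\geq 3$, since $\bz(1)^{\overline S}$ is concentrated in degree $1$) and then the \emph{middle row}, rather than starting from the bottom row and transferring back through the columns as you do. But the substantive input is the same: one must show that the map
\[ R\phi^S_*Rf_*\bz(1)\longrightarrow u^S_{\infty,*}Rf_{\infty,*}\tau^{>1}R\hat\pi_*(2\pi i\bz) \]
is an isomorphism on $\mathcal H^i$ for $i\geq 3$. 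The paper pins this down precisely via Artin's comparison theorem (identifying $u^{S,*}_\infty R\phi^S_*Rf_*\bq/\bz(1)$ with $R\Gamma(G_\br,\X(\bc),(2\pi i)\bq/\bz)$) together with the cohomological-dimension bound $\mathrm{cd}(\X_\infty)\leq 2$ already established in Lemma~\ref{pinftylemma}; your ``proper base change along $\overline f$ and corresponding computations for $\bz(1)^{\overline\X}$'' is pointing at the same thing, but the paper's route avoids having to unpack $K$ or $(R\phi^S_*P)_v$ explicitly.

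One genuine inaccuracy: your claim that $K$ is concentrated in degrees $\geq 1$ fails at a real place $v$ with $\X(F_v)=\emptyset$. There the second term in the right column of (\ref{sbarcomp}) vanishes (being supported on $\X(\br)$), so $K[-2]_v\cong (\tau^{>1}R\hat\pi^S_*(2\pi i\bz))_v[1]$ and one finds $\mathcal H^0(K)_v\cong\bz/2$. Thus surjectivity on $\mathcal H^0$ is \emph{not} automatic at such places. The lemma is still true there---what saves it is precisely the topological fact that $\X_v(\bc)/G_\br$ is a closed $2$-manifold with free $G_\br$-action, so $H^i(G_\br,\X_v(\bc),(2\pi i)\bz)=0$ for $i\geq 3$---but this is exactly the content you defer to ``careful bookkeeping,'' and the paper's organization via the left column makes this step transparent rather than buried in a diagram chase.
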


\begin{proof} Using the long exact sequence induced by the left column in (\ref{sbarcomp}) and the fact that
$\bz(1)^{\overline{S}}=\phi_*^S\bg_m[-1]$ is concentrated in degree $1$ \cite{Flach-Morin-16}[Prop. 6.11] it suffices to show
\begin{equation} \mathcal H^i(R\overline{f}_*\bz(1)^{\overline{\X}})=0 ; \quad i\geq 3.\notag\end{equation}
Using the long exact sequence induced by the central row in (\ref{sbarcomp}) it suffices to show that
\begin{equation}R\phi^S_*Rf_*\bz(1) \to u^S_{\infty,*}Rf_{\infty,*}\tau^{>1}R\hat{\pi}_*(2\pi i\bz)\notag\end{equation}
is a surjection in degree $2$ (this is clear as the target is zero) and an isomorphism in degrees $\geq 3$. Since $Rf_*\bz(1)$ is concentrated in degrees $\leq 2$ this is equivalent to
\begin{equation}\tau^{\geq 3}u^{S,*}_{\infty}R\phi^S_*Rf_*\bz(1) \to Rf_{\infty,*}\tau^{>1}R\hat{\pi}_*(2\pi i\bz)\label{rfres}\end{equation}
being an isomorphism. But the map in (\ref{rfres}) is isomorphic to the map
\[R\Gamma(G_{\mathbb{R}},\mathcal{X}(\mathbb{C}),(2\pi i)\mathbb{Z})
\to R\Gamma(\mathcal{X}(\mathbb{R}),\tau^{>1}R\widehat{\pi}_*(2\pi i)\mathbb{Z})\]
in (\ref{inftydefining}) which we have shown to be an isomorphism in degrees $\geq 3$ in the proof of Lemma \ref{pinftylemma}.
To see that the two maps are isomorphic in degrees $\geq 3$ use the exact triangles
\[ Rf_{\bc,*}(2\pi i)\bz \to  Rf_{\bc,*}(2\pi i)\bq \to  Rf_{\bc,*}(2\pi i)\bq/\bz\]
and
\[ Rf_*\bz(1)\to  Rf_*\bq(1)\to  Rf_*\bq/\bz(1)\]
and the isomorphism $\alpha^{S,*}Rf_*\bq/\bz(1)\cong Rf_{\bc,*}(2\pi i)\bq/\bz$ arising from proper base change and (a $G_\br$-equivariant version of) Artin's comparison theorem between \'etale and analytic cohomology, where $\alpha^S$ is the composite morphism of topoi
\[ Sh(G_\br,S(\bc))\to S_{\br,et}\to S.\]
By \cite{Flach-Morin-16}[Lem. 6.2] we have $u^{S,*}_\infty R\phi^S_*=R\pi^S_*\alpha^{S,*}$ and hence we find
\[u^{S,*}_{\infty}R\phi^S_*Rf_*\bq/\bz(1)\cong R\Gamma(G_{\mathbb{R}},\mathcal{X}(\mathbb{C}),(2\pi i)\bq/\mathbb{Z}).\]
The long exact sequences induced by the above two triangles then show that
\[\mathcal H^i\left(u^{S,*}_{\infty}R\phi^S_*Rf_*\bz(1)\right)\cong H^i(G_{\mathbb{R}},\mathcal{X}(\mathbb{C}),(2\pi i)\mathbb{Z})\]
for $i\geq 3$ where for $i=3$ we also need the fact that
\[\mathcal H^2\left(u^{S,*}_{\infty}R\phi^S_*Rf_*\bq(1)\right)\to H^2(G_{\mathbb{R}},\mathcal{X}(\mathbb{C}),(2\pi i)\mathbb{Q})\cong \bq\]
is surjective. This is clear since this map is just the degree map.
\end{proof}

We continue with the proof of Prop. \ref{Geisserprop}. Taking cohomology over $\overline{S}$ and $\overline{S}_v$ of the top row in (\ref{genfibre}) gives a map of long exact sequences
\[ \minCDarrowwidth1em\begin{CD}
H^0(\overline{S},P^0) @>\phi_{-1}>> J_F(F) @>>> H^1(\overline{S},E) @>\phi_0>> H^1(\overline{S},P^0)@>\phi_1>> \\
@VVV @VVV \Vert@. @VVV\\
\bigoplus\limits_{v\in\Sigma}H^0(\overline{S}_v,P^0) @>>> \bigoplus\limits_{v\in\Sigma}J_F(F_v) @>>> H^1(\overline{S},E) @>\phi_5>> \bigoplus\limits_{v\in\Sigma}H^1(\overline{S}_v,P^0)@>>>
\end{CD}\]
where the vertical isomorphism holds since $E$ is a sum of skyscraper sheaves supported in $\Sigma$. From Lemma \ref{h1br}, (\ref{h1ecompute}) and (\ref{alphadef}) we have
\begin{equation}
\#\Br(\overline{\X})\delta^2=\#H^1(\overline{S},P^0)=\frac{\#H^1(\overline{S},E)}{\#\cok(\phi_{-1})}\cdot\#\im(\phi_1)=\frac{\prod_{v\in\Sigma}\delta_v}{\alpha}\cdot\#\im(\phi_1).
\label{compbeginning}\end{equation}
To compute $\#\im(\phi_1)$  consider the continuation of the long exact sequences
\[ \minCDarrowwidth1em\begin{CD}
 H^1(\overline{S},E) @>\phi_0>> H^1(\overline{S},P^0)@>\phi_1>> H^1(\overline{S},\tilde{P}^0)@>>> H^2(\overline{S},E)\\
\Vert@. @VV\phi_3 V @VV\phi_2 V \Vert@. \\
H^1(\overline{S},E) @>\phi_5>> \bigoplus\limits_{v\in\Sigma}H^1(\overline{S}_v,P^0)@>\phi_4>> \bigoplus\limits_{v\in\Sigma}H^1(\overline{S}_v,\tilde{P}^0)@>>> H^2(\overline{S},E)\\
@. @VVV @. @.\\
{}@.  H^2(\overline{S},j_!j^*P^0) @.{}@.\\
@. @VV\phi_6 V @. @.\\
{}@.  H^2(\overline{S},P^0) @.{}@.
\end{CD}\]
where the vertical column is also exact. Here $j$ is the open immersion $\overline{S}\setminus\Sigma \xrightarrow{j} \overline{S}$.
Since $\ker(\phi_2)\subseteq\im(\phi_1)$ we have
\begin{align*} \#\im(\phi_1)=\#\im(\phi_2\circ\phi_1)\cdot\#\ker(\phi_2)=\#\im(\phi_4\circ\phi_3)\cdot\#\ker(\phi_2)
\end{align*}
and since $\im(\phi_5)\subseteq\im(\phi_3)$ we obtain
\begin{align} \#\im(\phi_1)=&\#\im(\phi_4\circ\phi_3)\cdot\#\ker(\phi_2)=\frac{\#\im(\phi_3)}{\#\im(\phi_5)}\cdot\#\ker(\phi_2)\notag\\
=&\frac{\prod_{v\in\Sigma}\#H^1(\overline{S}_v,P^0)}{\#\ker(\phi_6)\#\im(\phi_5)}\cdot\#\ker(\phi_2)\notag\\
=&\frac{\prod_{v\in\Sigma}\delta_v}{\alpha\prod_{v\in \Sigma}(\delta_v/\delta_v')}\cdot\#\Sha(J_F)\label{compending}
\end{align}
using Lemmas \ref{l2}, \ref{l3}, \ref{l4} below. Combining (\ref{compbeginning}) and (\ref{compending}) finishes the proof.

\end{proof}

\begin{lemma} For $v\in\Sigma$ one has
\begin{equation}H^1(\overline{S}_v,P^0)\cong\bz/\delta_v\bz\label{h1p0}\end{equation}
and $\im(\phi_{5,v})\cong\bz/(\delta_v/\delta_v')\bz$ where $\phi_5=\oplus_v\phi_{5,v}$.
\label{l2}\end{lemma}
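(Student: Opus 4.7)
The proof plan is to analyze local long exact cohomology sequences at each $v\in\Sigma$. Since $E$ is a sum of skyscraper sheaves on $\Sigma$, the map $\phi_5$ decomposes as $\bigoplus_v\phi_{5,v}$, and by naturality of the triangle $E\to P^0\to\tilde{P}^0$ from the top row of (\ref{genfibre}) with respect to localization at $v$, $\phi_{5,v}$ is identified with the map $H^1(\overline{S}_v,E)\to H^1(\overline{S}_v,P^0)$ in the local long exact sequence.

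At a finite place $v\in\Sigma_f$, the scheme $\overline{S}_v=S_v$ is the spectrum of the henselization of $S$ at $v$. First I would apply the cohomology long exact sequence of the defining triangle $P^0\to P\xrightarrow{\deg}\bz$ from (\ref{trunc2}) over $S_v$, using $H^1(S_v,\bz)=0$. The Leray spectral sequence for $f\colon\X_v\to S_v$ with coefficients in $\bg_m$, combined with Grothendieck's vanishing $R^qf_*\bg_m=0$ for $q\geq 2$ \cite{grothbrauer} and the vanishings $H^i(S_v,\bg_m)=0$ for $i\geq 1$ (standard for a henselian DVR with finite residue field, using $\Pic(S_v)=0$, $\Br(S_v)=\Br(\kappa(v))=0$, and the local Brauer isomorphism $\Br(F_v)\cong H^2(\kappa(v),\bz)$), yields $H^0(S_v,P)=\Pic(\X_v)$ and $H^1(S_v,P)=\Br(\X_v)$. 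The degree map on $\Pic(\X_v)$ has image $\delta_v\bz$ since it factors through restriction to the generic fiber, whose image is the index $\delta_v\bz$. Combined with the vanishing $\Br(\X_v)=0$, a local henselian consequence of Saito's duality \cite{Saito89}[Thm.~4.13] (cf.\ the proof of Prop.~\ref{avprop}), this gives $H^1(\overline{S}_v,P^0)\cong\bz/\delta_v\bz$.

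For the image of $\phi_{5,v}$, exactness gives $\ker(\phi_{5,v})=\im(\partial)$ where $\partial\colon H^0(\overline{S}_v,\tilde{P}^0)\to H^1(\overline{S}_v,E)=\bz/\delta_v\bz$ is the connecting homomorphism. Using the right column of (\ref{genfibre}) together with $H^0(\overline{S}_v,\eta_*\eta^*P)=P(F_v)$ and $\deg(P(F_v))=\delta_v'\bz$, one identifies $H^0(\overline{S}_v,\tilde{P}^0)=J_F(F_v)$. Naturality of the top row with respect to the middle row $E\to P\to\eta_*\eta^*P$ of (\ref{genfibre}), combined with the identification $P(F_v)/\Pic(\X_{F_v})\cong\bz/\delta_v'\bz$ of the Lichtenbaum period-index obstruction in $\Br(F_v)$ \cite{Lichtenbaum69} and the period-index comparison $\deg(\Pic(\X_{F_v}))=\delta_v\bz\subseteq\delta_v'\bz=\deg(P(F_v))$, determines $|\im(\partial)|=\delta_v'$ and hence $\im(\phi_{5,v})\cong\bz/(\delta_v/\delta_v')\bz$.

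Infinite places $v\in S_\infty$ are handled in parallel: $\overline{S}_v$ is a local topos whose cohomology $H^i(\overline{S}_v,-)$ equals the stalk at $v$ in the Artin-Verdier topos, so the computation reduces to $G_\br$-Galois cohomology on $\X(\bc)$ as in the proof of Lemma \ref{pinftylemma}; the identifications follow from $\delta_v/\delta_v'\in\{1,2\}$ at real places \cite{grossharris}, \cite{PoonenStoll} together with $\delta_v=\delta_v'=1$ at complex places. The main obstacles are (a) the vanishing $\Br(\X_v)=0$ at finite places of bad reduction, which requires Saito's local duality, and (b) the precise identification $|\im(\partial)|=\delta_v'$ via Lichtenbaum's period-index duality.
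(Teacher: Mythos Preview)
Your overall strategy matches the paper's: both use the long exact sequences arising from the rows and columns of (\ref{genfibre}) over $\overline{S}_v$, together with the key vanishing $H^1(\overline{S}_v,P)=0$. The paper cites \cite{grothbrauer}[(4.15)] directly for this vanishing at finite $v$ rather than going through $\Br(\X_v)=0$ via Leray and Saito; these are equivalent, but Grothendieck is the natural reference and invoking Saito here is overkill. At archimedean $v$ the paper obtains the same vanishing from Lemma~\ref{pbar}, which you do not invoke; your reduction to ``$G_\br$-cohomology as in Lemma~\ref{pinftylemma}'' is too vague to replace this input.

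There is, however, a genuine error in your computation of $\im(\phi_{5,v})$. You assert that $P(F_v)/\Pic(\X_{F_v})\cong\bz/\delta_v'\bz$, but this quotient is the relative Brauer group $\ker(\Br(F_v)\to\Br(\X_{F_v}))$, which for a curve over a local field has order equal to the \emph{index} $\delta_v$, not the period $\delta_v'$. Indeed, the middle row of (\ref{genfibre}) together with $H^1(S_v,P)=0$ identifies this quotient with $H^1(S_v,E)\cong\bz/\delta_v\bz$ (Lemma~\ref{elemma}). Your final conclusion $|\im(\partial)|=\delta_v'$ is correct, but it does not follow from the stated premises. The paper obtains it cleanly by applying the snake lemma to the map of short exact sequences
\[
\begin{CD}
0 @>>> \Pic^0(\X_{F_v}) @>>> \Pic(\X_{F_v}) @>\deg>> \delta_v\bz @>>> 0\\
@. @VVV @VVV @VVV @.\\
0 @>>> J_F(F_v) @>>> P(F_v) @>\deg>> \delta_v'\bz @>>> 0
\end{CD}
\]
which gives $\cok(\psi)=J_F(F_v)/\Pic^0(\X_{F_v})\cong\bz/\delta_v'\bz$ directly, using $|P(F_v)/\Pic(\X_{F_v})|=\delta_v$ and $|\delta_v'\bz/\delta_v\bz|=\delta_v/\delta_v'$. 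Once you correct the order of the relative Brauer group to $\delta_v$, this snake-lemma step is what your argument needs.
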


\begin{proof} Consider the commutative diagram with exact rows and columns induced by (\ref{genfibre})
\[\minCDarrowwidth1em\begin{CD} H^0(\overline{S}_v,P^0) @>\psi>> J_F(F_v) @>>> H^1(\overline{S}_v,E)@>\phi_{5,v}>>{}\\
 @VVV @VVV \Vert@. \\
H^0(\overline{S}_v,P) @>>> P(F_v) @>>> \bz/\delta_v\bz @>>> H^1(\overline{S}_v,P)=0\\
 @V\deg VV @V\deg VV\\
 \bz @=\bz\\
 @VVV @VVV\\
 H^1(\overline{S}_v,P^0)@>>> \bz/\delta_v'\bz\\
 @VVV @VVV\\
 H^1(\overline{S}_v,P)=0 @>>> 0
\end{CD}\]
The vanishing of $H^1(\overline{S}_v,P)$ is proven in \cite{grothbrauer}[(4.15)] for $v\in\Sigma_f$ and follows from Lemma \ref{pbar} for $v\in S_\infty$. The image of $H^0(\overline{S}_v,P)$ in $P(F_v)$ coincides with $\Pic(\X_{F_v})$. For $v\in\Sigma_f$ this is because $H^0(\overline{S}_v,P)=\Pic(\X_{S_v})\to \Pic(\X_{F_v})$ is surjective and for $v\in S_\infty$ both subgroups coincide with the kernel of the map $P(F_v)\to \Br(F_v)$. Hence the degree map on $H^0(\overline{S}_v,P)$ has image $\delta_v\bz$ which gives (\ref{h1p0}). By the snake Lemma on finds $\cok(\psi)\cong\bz/\delta'_v\bz$ and $\im(\phi_{5,v})\cong\bz/(\delta_v/\delta_v')\bz$.
\end{proof}

\begin{lemma} One has $\#\ker(\phi_6)=\alpha$.  \label{l3}\end{lemma}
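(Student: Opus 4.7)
The plan is to identify $\ker(\phi_6)$ via two reductions, eventually matching it with the cokernel appearing in the definition of $\alpha$.

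First, I would use the exact triangle
\[ j_!\, j^* P^{0,\overline{S}} \longrightarrow P^{0,\overline{S}} \longrightarrow i_{\Sigma,*}\, i_\Sigma^* P^{0,\overline{S}} \]
arising from the open/closed decomposition $\overline{S} = (\overline{S} \setminus \Sigma) \sqcup \Sigma$ with open immersion $j: \overline{S}\setminus\Sigma \hookrightarrow \overline{S}$. Taking hypercohomology and identifying $H^i(\overline{S}, i_{\Sigma,*}\, i_\Sigma^* P^{0,\overline{S}}) = \bigoplus_v H^i(\overline{S}_v, P^{0,\overline{S}})$, the relevant segment reads
\[ H^1(\overline{S}, P^{0,\overline{S}}) \xrightarrow{\phi_3} \bigoplus_{v \in \Sigma} H^1(\overline{S}_v, P^{0,\overline{S}}) \to H^2(\overline{S}, j_!\, j^* P^{0,\overline{S}}) \xrightarrow{\phi_6} H^2(\overline{S}, P^{0,\overline{S}}), \]
giving $\ker(\phi_6) \cong \cok(\phi_3)$. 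Since $E$ is supported on $\Sigma$ by Lemma \ref{elemma}, the triangle $E \to P^{0,\overline{S}} \to \tilde{P}^0$ yields $j^* P^{0,\overline{S}} \xrightarrow{\sim} j^* \tilde{P}^0$, which restricts to the abelian scheme $J_F$ over the smooth locus $U = \overline{S}\setminus\Sigma$. Hence $H^2(\overline{S}, j_!\, j^* P^{0,\overline{S}}) \cong H^2_c(U, J_F)$.

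Second, I would apply Artin-Verdier duality to both sides to reinterpret $\phi_6$ as the Pontryagin dual of the map $H^0(\overline{S}, P^{0,\overline{S}}) \to J_F(F)$ induced by $P^{0,\overline{S}} \to \tilde{P}^0$. The duality $\mathbf{AV}(\overline{S}, P^0)$ from (\ref{avp0}), passed to the inverse limit over $p^\nu$ for all $p$, identifies $H^2(\overline{S}, P^{0,\overline{S}})$ with the Pontryagin dual of the torsion part of $H^0(\overline{S}, P^{0,\overline{S}})$; similarly, Artin--Verdier--Tate duality for the self-dual abelian scheme $J_F$ on $U$ identifies $H^2_c(U, J_F)$ with the Pontryagin dual of $J_F(F)_{\text{tor}}$ (using the N\'eron mapping property $J_F(U) = J_F(F)$). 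Compatibility of these dualities with $P^{0,\overline{S}} \to \tilde{P}^0$ identifies $\phi_6$ with the stated dual map, so
\[ \#\ker(\phi_6) = \#\cok\bigl(H^0(\overline{S}, P^{0,\overline{S}}) \to J_F(F)\bigr) = \alpha \]
by (\ref{alphadef}).

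The main obstacle is the careful integral formulation of the two Artin--Verdier dualities and verifying their compatibility along the map $P^{0,\overline{S}} \to \tilde{P}^0$; this requires passing from mod-$p^\nu$ dualities to integral statements and assembling them over all primes $p$. A more elementary alternative is to combine the hypercohomology long exact sequences of $E \to P^{0,\overline{S}} \to \tilde{P}^0$ globally over $\overline{S}$ and locally over each $\overline{S}_v$ into a single large commutative diagram and perform a direct Snake Lemma chase using the local cardinality computations of Lemma \ref{l2} to extract $\#\cok(\phi_3) = \alpha$ without invoking full duality.
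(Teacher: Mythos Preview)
Your approach is essentially the one the paper takes: identify $\phi_6$ as the Pontryagin dual of the restriction map $H^0(\overline{S},P^0)\to H^0(U,P^0)\cong J_F(F)$, and conclude $\#\ker(\phi_6)=\#\cok(\phi_6^*)=\alpha$. A few comments on the execution.

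First, the detour through $\ker(\phi_6)\cong\cok(\phi_3)$ is unnecessary; the paper applies duality directly to $\phi_6$.

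Second, your description of the dual groups is not quite right. The groups $H^2(\overline{S},P^0)$ and $\hat H^2_c(U,P^0)$ are the full $\bq/\bz$-duals of the finitely generated groups $H^0(\overline{S},P^0)$ and $J_F(F)$, not merely of their torsion subgroups; in particular they are of cofinite type, not finite. The paper obtains this by noting that $\hat H^2_c(U,P^0)$ is torsion, hence isomorphic to $\hat H^1_c(U,P^0\otimes\bq/\bz)$, and then passing to the \emph{direct} limit (not inverse limit) of the mod-$p^\nu$ dualities $\mathbf{AV}(U,P^0)$. None of this affects the conclusion, since $\Hom(-,\bq/\bz)$ is exact and $\cok(\phi_6^*)$ is finite, so $\ker(\phi_6)=(\cok(\phi_6^*))^*$ has order $\alpha$.

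Third, and this is the one substantive point: you invoke Artin--Verdier duality for the abelian scheme $J_F$ over $U$ as an external input and then have to verify its compatibility with $\mathbf{AV}(\overline{S},P^0)$. The paper instead constructs $\mathbf{AV}(U,P^0)$ internally, by extending Saito's duality $\mathbf{AV}(\X,1)$ to $\mathbf{AV}(\X_U,1)$ over the open base and running the same diagram (\ref{keydia}) with $S,\X$ replaced by $U,\X_U$. This makes the compatibility with $\mathbf{AV}(\overline{S},P^0)$ automatic, since both arise from the same construction. Your route is valid but the compatibility check you flag as ``the main obstacle'' is exactly what the paper's choice of source for the duality is designed to avoid.
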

\begin{proof} For the open subscheme $U:=S\setminus\Sigma_f$ of $S$ and any prime $p$ the proof of Prop. \ref{avprop} generalizes to prove a duality isomorphism $\mathbf{AV}(\mathcal{X}_U,1)$ fitting into a commutative diagram of isomorphisms
\[\begin{CD}
R\Gamma(U,\bz(1)/p^\nu) @>>> R\Gamma(\X_U,\bz(1)/p^\nu) \\
@V \mathbf{AV}(U,1) VV  @V\mathbf{AV}(\mathcal{X}_U,1) VV \\
\hat{R}\Gamma_c(U,\bz/p^\nu)^*[-3] @>>> \hat{R}\Gamma_c(\X_U,\bz(1)/p^\nu)^*[-5].
\end{CD}\]
One then obtains diagrams (\ref{keydia}) with $S$, resp. $\X$, replaced by $U$, resp. $\X_U$. The triangle (\ref{trunc4}) and the left hand column in (\ref{sbarcomp2}) then induce an isomorphism $\mathbf{AV}(U,P^0)$ fitting into a commutative diagram
\begin{equation}\begin{CD}
R\Gamma(\overline{S},P^0/p^\nu) @>>> R\Gamma(U,P^0/p^\nu)\\
@V{\mathbf{AV}(\overline{S},P^0)} VV @V{\mathbf{AV}(U,P^0)}VV \\
R\Gamma(\overline{S},P^0/p^\nu)^*[-1]@>>>\hat{R}\Gamma_c(U,P^0/p^\nu)^*[-1].
\end{CD}\notag\end{equation}
One has isomorphisms
\[ H^2(\overline{S},j_!P^0)\cong\hat{H}^2_c(U,P^0)\cong \hat{H}^1_c(U,P^0\otimes\bq/\bz)\cong H^0(U,P^0)^*\]
where the first isomorphism holds since Tate cohomology agrees with ordinary cohomology in degrees $\geq 1$, the second since
$\hat{H}^i_c(U,P^0)$ is torsion for $i=1,2$ and the third by taking the limit of $\mathbf{AV}(U,P^0)^*$ over all $p$ and all $\nu$. One finds that $\phi_6$ is dual to the natural restriction map $$\phi_6^*:H^0(\overline{S},P^0)\cong\Pic^0(\X)/\Pic(\co_F)\to H^0(U,P^0)\cong J_F(F)$$ where the last
isomorphism holds since $f$ is smooth over $U$, hence $P^0$ coincides with the (sheaf represented by the) Neron model of $J_F$. From the definition (\ref{alphadef}) of $\alpha$ we conclude that $\alpha=\#\cok(\phi_6^*)=\#\ker(\phi_6)$.
\end{proof}

\begin{lemma} There is an isomorphism $\ker(\phi_2)\cong \Sha(J_F)$.     \label{l4}\end{lemma}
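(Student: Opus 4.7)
The plan is to identify both $H^1(\overline{S},\tilde{P}^0)$ and the local groups $H^1(\overline{S}_v,\tilde{P}^0)$ for $v\in\Sigma$ in classical Galois-cohomological terms via the Leray spectral sequence for $\eta:\Spec(F)\hookrightarrow\overline{S}$, and then match the resulting kernel of $\phi_2$ with the classical definition of $\Sha(J_F)$ using Lang's theorem at places of good reduction.

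First I would verify that $\tilde{P}^0$ is concentrated in degree zero and equals $\eta_*J_F$. From the right-hand column of diagram (\ref{genfibre}), it suffices to check that the degree map on the stalk $(\eta_*\eta^*P^{\overline{S}})_v\to\bz$ is surjective at every closed point; this holds because the strict Henselization $F_v^{sh}$ is a $C_1$-field by Lang, forcing the index of $\X_{F_v^{sh}}$ to be $1$. Next, the higher direct images $R^q\eta_*J_F$ for $q\geq 1$ are supported at closed points of $\overline{S}$ with stalks $H^q(F_v^{sh},J_F)$, hence are sums of skyscrapers with vanishing higher cohomology on $\overline{S}$. The Leray spectral sequence therefore collapses to the low-degree exact sequence
\[ 0\to H^1(\overline{S},\tilde{P}^0)\to H^1(F,J_F)\to \bigoplus_v H^1(F_v^{sh},J_F) \]
globally, and to the analogous sequence
\[ 0\to H^1(\overline{S}_v,\tilde{P}^0)\to H^1(F_v,J_F)\to H^1(F_v^{sh},J_F) \]
locally at each $v\in\Sigma$ (with the convention $F_v^{sh}=\bar{F}_v$ at archimedean places); the map $\phi_2$ is induced by functoriality in $\eta$.

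The decisive input is Lang's theorem at places of good reduction. For a finite place $v\notin\Sigma_f$, the special fibre of the Néron model of $J_F$ is an abelian variety over the finite residue field $k_v$, so $H^1(k_v,J_F^{\mathrm{red}})=0$; combined with smoothness of the Néron model and inflation-restriction this yields the injection $H^1(F_v,J_F)\hookrightarrow H^1(F_v^{sh},J_F)$. At archimedean $v$ the target $H^1(F_v^{sh},J_F)$ already vanishes since $F_v^{sh}=\bar{F}_v$ is algebraically closed. As a consequence, any class $x\in H^1(F,J_F)$ lying in $H^1(\overline{S},\tilde{P}^0)$, i.e.\ with trivial image in each $H^1(F_v^{sh},J_F)$, automatically restricts trivially to $H^1(F_v,J_F)$ for every $v\notin\Sigma$.

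Putting these pieces together, $\ker(\phi_2)$ consists of classes $x\in H^1(F,J_F)$ whose image in $H^1(F_v,J_F)$ vanishes both for $v\in\Sigma$ (forced by $\phi_2(x)=0$ together with the local Leray identification) and for $v\notin\Sigma$ (automatic by Lang), which is precisely $\Sha(J_F)$. The main technical obstacle will be executing the Leray argument uniformly across finite and archimedean places, since the local topos $\overline{S}_v$ at an infinite place is defined by gluing rather than as a spectrum of a Henselian ring; one needs to verify that the restriction of $\tilde{P}^0$ to $\overline{S}_v$ agrees with the pushforward of $J_F$ from the local generic point and that the higher direct images behave as skyscrapers in this setup as well.
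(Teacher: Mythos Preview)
Your overall strategy—identify $\ker(\phi_2)$ inside $H^1(F,J_F)$ via a Leray-type argument and then invoke Lang at good places—is exactly the paper's approach. However, your first step contains a genuine error: the claim that $\tilde{P}^0$ is concentrated in degree zero (hence equal to $\eta_*J_F$) fails at real archimedean places.

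Concretely, in the Artin--Verdier topos the stalk of $\eta_*\eta^*P^{\overline{S}}$ at an archimedean point $v$ is $P(F_v)$, \emph{not} $P(\bar{F}_v)$. This is visible in the diagram of Lemma~\ref{l2}, whose right column reads $J_F(F_v),\,P(F_v),\,\bz,\,\bz/\delta'_v\bz,\,0$; the entry $\bz/\delta'_v\bz$ is precisely $H^1(\overline{S}_v,\tilde{P}^0)=\mathcal H^1(\tilde{P}^0)_v$. Thus at any real place with $\delta'_v=2$ one has $\mathcal H^1(\tilde{P}^0)_v\cong\bz/2\bz\neq 0$, so your convention ``$F_v^{sh}=\bar{F}_v$'' does not compute the correct stalk and the surjectivity of the degree map fails there. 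You correctly flagged this as ``the main technical obstacle,'' but it is not a technicality that can be overcome: the identification $\tilde{P}^0=\eta_*J_F$ is simply false in general.

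The paper circumvents this by \emph{not} assuming $\mathcal H^1(\tilde{P}^0)=0$. Instead it runs two parallel diagram chases. The first, using the Leray spectral sequence for $\eta_*J_F$ (and the fact that $R^1\eta_*J_F$ is a sum of skyscrapers), shows $\ker(\psi)\cong\Sha(J_F)$ where $\psi:H^1(\overline{S},\eta_*J_F)\to\bigoplus_v H^1(\overline{S}_v,\eta_*J_F)$; this is essentially your argument once one replaces $\tilde{P}^0$ by $\eta_*J_F$. The second uses the hypercohomology spectral sequence for $\tilde{P}^0$: since $\mathcal H^0(\tilde{P}^0)\cong\eta_*J_F$ and $\mathcal H^1(\tilde{P}^0)$ is a sum of skyscrapers supported in $\Sigma$ (so that $H^0(\overline{S},\mathcal H^1(\tilde{P}^0))=\bigoplus_v H^0(\overline{S}_v,\mathcal H^1(\tilde{P}^0))$), the same diagram chase gives $\ker(\phi_2)\cong\ker(\psi)$. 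The extra $\mathcal H^1$ is harmless precisely because it is skyscraper, but it cannot be ignored.
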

\begin{proof} Consider the commutative diagram with exact rows and columns
\[\begin{CD}
{}@.  0 @. 0\\
@. @VVV @VVV\\
\ker(\psi) @>>> H^1(\overline{S},\eta_*J_F) @>\psi>> \bigoplus\limits_v H^1(\overline{S}_v,\eta_*J_F)\\
@VVV @VVV @VVV\\
\Sha(J_F) @>>> H^1(F,J_F) @>>> \bigoplus\limits_v H^1(F_v,J_F)\\
@. @VVV @VVV\\
{}@.     H^0(\overline{S},R^1\eta_*J_F) @= \bigoplus\limits_v H^0(\overline{S}_v, R^1\eta_*J_F)
\end{CD}\]
where the vertical exact sequences arise from the Leray spectral sequence for the morphism $\eta$. Note that $R^1\eta_*J_F$ is a sum of skyscraper sheaves (with stalk $H^1(I_v,J_F)$) hence the bottom identity. An easy diagram chase then shows that $\ker(\psi)\cong\Sha(J_F)$. Exactly the same argument applies to the diagram
\[\begin{CD}
{}@.  0 @. 0\\
@. @VVV @VVV\\
\ker(\psi) @>>> H^1(\overline{S},\eta_*J_F) @>\psi>> \bigoplus\limits_v H^1(\overline{S}_v,\eta_*J_F)\\
@VVV @VVV @VVV\\
\ker(\phi_2) @>>> H^1(\overline{S},\tilde{P}^0) @>\phi_2>> \bigoplus\limits_v H^1(\overline{S}_v,\tilde{P}^0)\\
@. @VVV @VVV\\
{} @. H^0(\overline{S},\mathcal H^1(\tilde{P}^0)) @= \bigoplus\limits_v H^0(\overline{S}_v,\mathcal H^1(\tilde{P}^0))
\end{CD}\]
where now the vertical maps arise from the hypercohomology spectral sequence for the complex $\tilde{P}^0$, noting that $\mathcal H^0(\tilde{P}^0)\cong\eta_*J_F$ and that $\mathcal H^1(\tilde{P}^0)$ is a sum of skyscraper sheaves over points in $\Sigma$. This finishes the proof.
\end{proof}

By an inner product on a finitely generated abelian group $N$ we mean a (either positive or negative) definite, symmetric bilinear form $<-,->$ on $N_\br:=N\otimes_\bz\br$. For such an $N$ we define
\begin{equation}  \Delta(N):=\frac{|\det(<b_i,b_j>)|}{[N:\bigoplus_i\bz b_i]^2}\label{vdef}\end{equation}
where $\{b_i\}\subseteq N$ is a maximal linearly independent subset. One easily verifies that $\Delta(N)$ only depends on $<-,->$ but not on the choice of  $\{b_i\}$.

\begin{lemma} Let
\[ \cdots \to N_i \xrightarrow{d_i} N_{i+1} \to \cdots\]
be an exact sequence of finitely generated abelian groups of finite length. Assume each $N_i$ is equipped with an inner product $\tau_i$ so that $$\tau_{i+1}\vert_{\im(d_i)_\br}=\tau_i\vert_{\ker(d_i)_\br^\perp}$$
via the isomorphism $d_i:\ker(d_i)_\br^\perp\cong \im(d_i)_\br$ induced by $d_i$. Then
\begin{equation}\prod_i \Delta(N_i)^{(-1)^i}=1.\notag\end{equation}
\label{innerprod}\end{lemma}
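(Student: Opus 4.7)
The plan is to reduce to the short exact sequence case and then telescope. For each $i$, I set $K_i := \ker(d_i) = \im(d_{i-1}) \subseteq N_i$ and equip $K_i$ with the restriction of $\tau_i$, which is again definite. The long exact sequence then splits into short exact sequences
$$0 \to K_i \hookrightarrow N_i \xrightarrow{d_i} K_{i+1} \to 0,$$
and the hypothesis $\tau_{i+1}|_{\im(d_i)_\br} = \tau_i|_{\ker(d_i)_\br^\perp}$ is precisely the compatibility statement that the orthogonal-complement form induced by $\tau_i$ on $N_{i,\br}/K_{i,\br}$ agrees, under the isomorphism induced by $d_i$, with the form on $K_{i+1}$ restricted from $\tau_{i+1}$.

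The key intermediate step will be the multiplicativity of $\Delta$ in a short exact sequence $0\to A \to B \to C \to 0$ of finitely generated abelian groups with compatible definite inner products: $\Delta(B) = \Delta(A)\,\Delta(C)$. I would prove this by picking $\{a_i\}\subseteq A$ lifting a $\bz$-basis of $A/A_{tor}$, a $\bz$-basis $\{\bar c_k\}$ of $C/C_{tor}$ with lifts $c_k \in B$, so that $\{a_i, c_k\}$ is a maximal linearly independent subset of $B$. An elementary index calculation, using the short exact sequence $0 \to A_{tor} \to B/L \to C_{tor} \to 0$ where $L=\bigoplus \bz a_i \oplus \bigoplus \bz c_k$, yields $[B:L] = |A_{tor}|\cdot|C_{tor}|$. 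On the Gram-matrix side, the Schur complement identity gives
$$\det \mathrm{Gram}_B(\{a_i,c_k\}) = \det\mathrm{Gram}_A(\{a_i\}) \cdot \det G',$$
where $G'$ is the Gram matrix (with respect to $\tau_B$) of the orthogonal projections of the $c_k$ onto $A_\br^\perp \subseteq B_\br$. The compatibility hypothesis identifies $\tau_B|_{A_\br^\perp}$ with $\tau_C$ through the isomorphism $A_\br^\perp \cong C_\br$ induced by the quotient, and since each $c_k$ projects to $\bar c_k$ in $C$, this yields $G' = \mathrm{Gram}_C(\{\bar c_k\})$. Combining with the index computation gives $\Delta(B) = \Delta(A)\,\Delta(C)$.

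Granting multiplicativity, the telescoping
$$\prod_i \Delta(N_i)^{(-1)^i} = \prod_i \Delta(K_i)^{(-1)^i}\,\Delta(K_{i+1})^{(-1)^i} = 1$$
(after reindexing $j = i+1$ in the second factor, which cancels the first) finishes the proof; the finite length of the sequence ensures everything is a finite product. The main obstacle is the Schur-complement identification of $G'$ with $\mathrm{Gram}_C(\{\bar c_k\})$, which is exactly where the precise form of the compatibility hypothesis is used. The remainder is bookkeeping with torsion indices, and the squared index appearing in the denominator of the definition of $\Delta$ is calibrated precisely so that this bookkeeping balances.
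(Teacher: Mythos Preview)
Your proof is correct and follows essentially the same route as the paper's: both reduce to multiplicativity of $\Delta$ in short exact sequences $0\to\ker(d_i)\to N_i\to\im(d_i)\to 0$ and then telescope. The only cosmetic differences are that the paper phrases the Gram-matrix step as ``modify the $b_j$ for $j>l$ by elements of $\ker(d_i)_\br$ to lie in $\ker(d_i)_\br^\perp$'' (i.e.\ column operations making the Gram matrix block-diagonal) where you invoke the Schur complement identity, and the paper works with arbitrary maximal linearly independent subsets and the snake lemma for the index where you choose lifts of bases of the torsion-free quotients so that the indices are literally $|A_{tor}|$ and $|C_{tor}|$.
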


\begin{proof} If suffices to prove the statement for the short exact sequences
\[ 0\to \ker(d_i)\to N_i\to \im(d_i)\to 0.\]
A maximal linearly independent subset $\{b_j\}_{1\leq j\leq l}\subset \ker(d_i)$ can be extended to a maximal linearly independent subset
$\{b_j\}_{1\leq j\leq l+k}\subset N_i$ and $\{\bar{b}_j\}_{l+1\leq j\leq l+k}\subset \im(d_i)$ is then also a maximal linearly independent subset. By the snake lemma we have
\[  [N_i:\bigoplus_{j=1}^{l+k}\bz b_j]=[\ker(d_i):\bigoplus_{j=1}^{l}\bz b_j]\cdot[\im(d_i):\bigoplus_{j=l+1}^{l+k}\bz \bar{b}_j]\]
and we also have
\[ \det(<b_j,b_{j'}>)_{1\leq j,j'\leq l+k}=\det(<b_j,b_{j'}>)_{1\leq j,j'\leq l}\cdot \det(<\bar{b}_j,\bar{b}_{j'}>)_{l+1\leq j,j'\leq l+k}\]
since the $\{b_j\}_{l+1\leq j\leq l+k}$ can be modified into elements of $\ker(d_i)_\br^\perp$ (by adding elements of $\ker(d_i)_\br$) without changing $\det(<\bar{b}_j,\bar{b}_{j'}>)_{l+1\leq j,j'\leq l+k}$ or $\det(<b_j,b_{j'}>)_{1\leq j,j'\leq l+k}$.
\end{proof}

Recall that for each $v\in\Sigma_f$ we denote by $C_v$ the set of irreducible components of the fibre $\X_{\kappa(v)}$ and by $r_{v,i}=[\kappa(v)_i:\kappa(v)]$ the degree of the constant field of the component corresponding to $i\in C_v$.

\begin{lemma} Let $L(J_F,s)$ be the Hasse-Weil L-function of the Jacobian of $\X_F$. Then
\begin{equation} \ord_{s=1}\zeta(H^1,s)=\ord_{s=1}L(J_F,s)+\sum_{v\in\Sigma_f}(\#C_v-1)\label{vancomp}\end{equation}
and
\begin{equation} \zeta^*(H^1,1)=L^*(J_F,1)\cdot\prod_{v\in\Sigma_f}(\log Nv)^{\#C_v-1}\prod_{i\in C_v}r_{v,i}\label{leadcomp}\end{equation}
\label{lfunction}\end{lemma}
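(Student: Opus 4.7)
The plan is to establish the Lemma via explicit comparison of local Euler factors of $\zeta(H^1,s)$ and $L(J_F,s)$ place-by-place. By the defining factorization (\ref{factorization}), the local factor at any finite place $v$ equals
\[\zeta_v(H^1,s)=\zeta_{F_v}(s)\zeta_{F_v}(s-1)/\zeta(\X_{\kappa(v)},s),\]
and by the Grothendieck--Lefschetz trace formula together with proper base change,
\[\zeta(\X_{\kappa(v)},s)=\prod_{i=0}^{2}\det(1-\Frob_v\cdot Nv^{-s}\mid H^i(\X_{\overline{\kappa(v)}},\bq_\ell))^{(-1)^{i+1}}.\]
Since $\ell$-adic cohomology is insensitive to nilpotents, one may replace the fibre by its reduction throughout.

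First I would analyze the three cohomology degrees separately. The $H^0$-contribution is $\zeta_{F_v}(s)$ because the reduced fibres of $f$ are geometrically connected (from $f_*\co_\X=\co_S$ and the flatness of $f$). The $H^2$-contribution uses that the geometric irreducible components of $\X_{\overline{\kappa(v)}}$ form the $G_{\kappa(v)}$-set $\bigsqcup_{i\in C_v}\Spec(\kappa(v)_i\otimes_{\kappa(v)}\overline{\kappa(v)})$, giving
\[H^2(\X_{\overline{\kappa(v)}},\bq_\ell)\cong\bigoplus_{i\in C_v}\Ind_{G_{\kappa(v)_i}}^{G_{\kappa(v)}}\bq_\ell(-1)\]
and thus an Euler factor $\prod_{i\in C_v}(1-Nv^{r_{v,i}(1-s)})^{-1}$. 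The $H^1$-contribution uses the identification $H^1(\X_{\overline{\kappa(v)}},\bq_\ell)=V_\ell\Pic^0(\X_{\overline{\kappa(v)}}^{red})$ together with Raynaud's theorem identifying the identity component of $\Pic^0_{\X/S}$ with that of the N\'eron model of $J_F$, yielding $H^1(\X_{\overline{\kappa(v)}},\bq_\ell)\cong V_\ell(J_F)^{I_v}$ as $G_{\kappa(v)}$-representations (the unipotent part of the N\'eron model being killed by $V_\ell$). This contributes the factor $L_v(J_F,s)^{-1}$. Combining the three gives
\[\zeta_v(H^1,s)=L_v(J_F,s)\cdot\frac{\prod_{i\in C_v}(1-Nv^{r_{v,i}(1-s)})}{1-Nv^{1-s}},\]
which collapses to $L_v(J_F,s)$ exactly when $v\notin\Sigma_f$ (since then $\#C_v=1$ and $r_{v,1}=1$).

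Finally, to extract (\ref{vancomp}) and (\ref{leadcomp}), each factor $(1-Nv^{r(1-s)})$ has a simple zero at $s=1$ with first derivative $r\log Nv$, while each $L_v(J_F,s)$ is finite and nonzero at $s=1$. Hence at $v\in\Sigma_f$ the correction factor contributes a zero of order $\#C_v-1$ with leading coefficient $(\log Nv)^{\#C_v-1}\prod_{i\in C_v}r_{v,i}$, and multiplying these contributions over $v\in\Sigma_f$ yields the stated identities. The hardest step is the identification of $H^1(\X_{\overline{\kappa(v)}},\bq_\ell)$ with $V_\ell(J_F)^{I_v}$ at bad $v$, though for the Euler factor comparison only the equality of Frobenius characteristic polynomials is needed; this follows from the analysis of the toric and abelian ranks of $\mathcal{J}^0_{\kappa(v)}$ in terms of the normalization and dual graph of the fibre, together with the $\ell$-adic specialization map.
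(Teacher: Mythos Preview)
Your proposal is correct and follows essentially the same route as the paper: decompose each local factor $\zeta(\X_{\kappa(v)},s)$ via Grothendieck--Lefschetz into contributions from $H^0$, $H^1$, $H^2$ of the geometric special fibre, identify these, and read off the correction factor $\prod_{v\in\Sigma_f}(1-Nv^{1-s})^{-1}\prod_{i\in C_v}(1-Nv^{r_{v,i}(1-s)})$.

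The only noteworthy differences are in how the identifications are justified. For $H^1$, the paper simply cites \cite{Bloch87}[Lem.~1.2] for the isomorphism $H^1(\X_{\overline{\kappa(v)}},\bq_\ell)\cong H^1(\X_{\overline{F}},\bq_\ell)^{I_v}$, whereas you argue via $V_\ell\Pic^0$ and Raynaud's comparison with the N\'eron model; both routes are standard and equivalent. For $H^2$, the paper packages the computation through the $G_{\kappa(v)}$-module $E_v$ already introduced in (\ref{evdef}), writing an exact sequence $0\to E_v\otimes\bq_\ell(-1)\to H^2(\X_{\overline{\kappa(v)}},\bq_\ell)\to\bq_\ell(-1)\to 0$; combined with the defining sequence for $E_v$ this is exactly your direct description $H^2\cong\bigoplus_i\Ind_{G_{\kappa(v)_i}}^{G_{\kappa(v)}}\bq_\ell(-1)$. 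The paper's phrasing has the small advantage that $E_v$ reappears in later lemmas (e.g.\ Lemma~\ref{evintersection}), so the framing is reused.
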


\begin{proof} Denoting by $\Y_0$ the set of closed points of a scheme $\Y$ we have
\begin{align*} \zeta(\X,s)=&\prod_{x\in\X_0}\frac{1}{1-\#\kappa(x)^{-s}}=\prod_{v\in S_0}\zeta(\X_{\kappa(v)},s)\\
=&\prod_{v\in S_0}\prod_{m=0}^2\mydet_{\bq_l}\left(\id-Nv^{-s}\Frob_v\vert H^m(\X_{\overline{\kappa(v)}},\bq_l)\right)^{(-1)^{m+1}}
\end{align*}
and isomorphisms \cite{Bloch87}[Lem. 1.2]
\begin{align*} H^0(\X_{\overline{\kappa(v)}},\bq_l)& \cong H^0(\X_{\overline{F}},\bq_l)^{I_v}\cong\bq_l\\
H^1(\X_{\overline{\kappa(v)}},\bq_l)& \cong H^1(\X_{\overline{F}},\bq_l)^{I_v}
\end{align*}
and an exact sequence
\[ 0 \to E_v\otimes_\bz\bq_l(-1)\to H^2(\X_{\overline{\kappa(v)}},\bq_l) \to H^2(\X_{\overline{F}},\bq_l)^{I_v}\cong\bq_l(-1)\to 0. \]
From (\ref{evdef}) we have
\[\mydet_{\bq_l}\left(\id-T\cdot\Frob_v\vert E_v\otimes_\bz\bq_l(-1)\right)=\frac{1}{1-T\cdot Nv}\prod_{i\in C_v}(1-(T\cdot Nv)^{r_{v,i}}).\]
Comparing with (\ref{factorization}) we find (see \cite{Flach-Morin-16}[Rem. 7.5] for connections with the perverse $t$-structure)
\begin{align*} \zeta(H^1,s)=&\prod_{v\in S_0}\mydet_{\bq_l}\left(\id-Nv^{-s}\Frob_v\vert H^1(\X_{\overline{F}},\bq_l)^{I_v}\right)^{-1}\\
\cdot&\prod_{v\in\Sigma_f} \mydet_{\bq_l}\left(\id-Nv^{-s}\Frob_v\vert E_v\otimes_\bz\bq_l(-1)\right)\\
=&L(J_F,s)\cdot\prod_{v\in\Sigma_f}\frac{1}{1-Nv^{-(s-1)}}\prod_{i\in C_v}(1-Nv^{-(s-1)r_{v,i}}).
\end{align*}
From this (\ref{vancomp}) and (\ref{leadcomp}) are immediate.
\end{proof}

\begin{lemma} For each $v\in\Sigma_f$ we have
\[ \Delta(E_v^{G_{\kappa(v)}})=\frac{\#\Phi_v}{\delta_v'\delta_v}(\log Nv)^{\#C_v-1}\prod_{i\in C_v}r_{v,i}\]
where $\Phi_v$ is the component group of the Neron model of $J_F$ at $v$. Here $\Delta$ is formed with respect to the Arakelov intersection pairing.
\label{evintersection}\end{lemma}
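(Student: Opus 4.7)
Plan. First, I would take $G_{\kappa(v)}$-invariants of the defining sequence (\ref{evdef}) of $E_v$. Since $H^1(G_{\kappa(v)},\bz)=\Hom_{\mathrm{cts}}(\hat\bz,\bz)=0$, the invariant sequence is short exact. Under the identification $(\Ind_{G_{\kappa(v)_i}}^{G_{\kappa(v)}}\bz)^{G_{\kappa(v)}}\cong\bz$, in which the constant function $1$ (i.e.\ the orbit-sum of the $r_{v,i}$ geometric components over $\kappa(v)^{sep}$) corresponds to the class $[Y_{v,i}]$ of the $\kappa(v)$-rational irreducible component, the invariant sequence reads
\[0\to\bz\xrightarrow{(d_{v,i})_i}\bigoplus_{i\in C_v}\bz\,[Y_{v,i}]\to E_v^{G_{\kappa(v)}}\to 0,\]
so that $E_v^{G_{\kappa(v)}}$ is naturally the group of divisor classes on $\X$ supported on the fibre $Y_v$ modulo the fibre class $[Y_v]=\sum_i d_{v,i}[Y_{v,i}]$, sitting inside $\Pic^0(\X)$ because vertical divisors have degree zero on the generic fibre.

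Second, I would restrict the Arakelov intersection pairing. Divisors supported on $Y_v$ have no archimedean Green-function contribution and do not meet vertical divisors at other finite places, so $\langle[Y_{v,i}],[Y_{v,j}]\rangle=(\log Nv)\cdot(Y_{v,i}\cdot Y_{v,j})$ with the right hand side the classical geometric intersection number on the fibre. Writing $M_v:=\bigl((Y_{v,i}\cdot Y_{v,j})\bigr)_{i,j\in C_v}$, the standard fact that fibres are numerically equivalent to zero shows $M_v$ is negative semidefinite with one-dimensional radical $\br\cdot(d_{v,i})$, so the induced pairing on $E_v^{G_{\kappa(v)}}\otimes_\bz\br$ is negative definite. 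Fix $i_0\in C_v$ and take the maximal linearly independent subset $\{b_i:=[Y_{v,i}]:i\neq i_0\}$; the exact sequence above forces $[E_v^{G_{\kappa(v)}}:\bigoplus_{i\neq i_0}\bz b_i]=d_{v,i_0}$, and the Gram determinant equals $(\log Nv)^{\#C_v-1}\cdot|\det M_v^{(i_0,i_0)}|$, where $M_v^{(i_0,i_0)}$ is the corresponding cofactor of $M_v$. Hence
\[\Delta(E_v^{G_{\kappa(v)}})=(\log Nv)^{\#C_v-1}\cdot\frac{|\det M_v^{(i_0,i_0)}|}{d_{v,i_0}^2},\]
and the adjugate identity $\mathrm{adj}(M_v)_{ij}=\mu\cdot(d_{v,i}/g)(d_{v,j}/g)$, where $g=\gcd_i d_{v,i}$ and $\mu\in\bz$ is the pseudo-determinant of $M_v$, shows this is independent of $i_0$.

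The remaining and most delicate step is to establish
\[\frac{|\det M_v^{(i_0,i_0)}|}{d_{v,i_0}^2}=\frac{\#\Phi_v\cdot\prod_{i\in C_v}r_{v,i}}{\delta_v'\cdot\delta_v},\]
which I would deduce by invoking Raynaud's intersection-theoretic description of the component group of the N\'eron model: over the strict Henselization one has $\Phi_v^{\mathrm{geom}}\cong\ker(\deg^{\mathrm{geom}})/\alpha^{\mathrm{geom}}(X^{\mathrm{geom}})$ in terms of the geometric intersection matrix (Bosch--L\"utkebohmert--Raynaud, \emph{N\'eron Models}, Ch.~9), then descending via $\Phi_v=(\Phi_v^{\mathrm{geom}})^{G_{\kappa(v)}}$ and combining with (\ref{liu-colliot-saito}), i.e.\ $\delta_v=\gcd(r_{v,i}d_{v,i})$, together with the analogous characterization of the period $\delta_v'$. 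The main obstacle is the careful bookkeeping of how the $r_{v,i}$ enter when comparing the $(\sum_i r_{v,i})$-dimensional geometric intersection matrix with the $(\#C_v)$-dimensional $\kappa(v)$-rational one, and how $\delta_v$ and $\delta_v'$ interact with the pseudo-determinant $\mu$ of $M_v$ to yield the quotient $\#\Phi_v\prod_ir_{v,i}/(\delta_v'\delta_v)$.
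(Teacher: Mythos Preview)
Your first two steps are correct and essentially match the paper: taking $G_{\kappa(v)}$-invariants of (\ref{evdef}) gives the short exact sequence $0\to\bz\xrightarrow{\gamma}\bz^{C_v}\to\tilde E_v:=E_v^{G_{\kappa(v)}}\to 0$, and the Arakelov pairing restricts to $(\log Nv)$ times the geometric intersection form on $\bz^{C_v}$. The paper packages the resulting $\Delta$ slightly differently---using that $(\tilde E_v)_{tor}\cong\bz/d_v\bz$ with $d_v=\gcd_i d_{v,i}$ and identifying the Gram determinant with $\#(\ker\gamma^\star/\im\tilde\alpha)$ where $\tilde\alpha:\bz^{C_v}\to\bz^{C_v}$ is the intersection matrix---but your cofactor expression $|\det M_v^{(i_0,i_0)}|/d_{v,i_0}^2$ is the same number.

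The genuine gap is your final step. You propose to work over the strict henselization, identify $\Phi_v^{\mathrm{geom}}$ via Raynaud's presentation, and then descend using $\Phi_v=(\Phi_v^{\mathrm{geom}})^{G_{\kappa(v)}}$. Two problems: first, the descent bookkeeping you flag as the ``main obstacle'' is real and nontrivial---the geometric intersection matrix has size $\sum_i r_{v,i}$ and its Galois-invariant cokernel is not directly a cofactor of your $\#C_v\times\#C_v$ matrix $M_v$. Second, and more seriously, you have no mechanism for the period $\delta_v'$ to appear. There is no purely intersection-theoretic characterization of $\delta_v'$; it is a statement about $\Pic_{\X_{F_v}/F_v}(F_v)$, not about the special fibre.

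The paper avoids both issues by citing a result already formulated over $\kappa(v)$: by \cite{Bosch-Liu}[Thm.~1.11] and \cite{LLR}[proof of Lem.~4.4] there is an exact sequence
\[0\to\ker(\beta)/\im(\alpha)\to\Phi_v\to\bz/(\delta_v'/d_v)\bz\to 0,\]
where $\alpha(D)=\sum_i r_{v,i}^{-1}\langle D,\X_{\kappa(v),i}\rangle\,\X_{\kappa(v),i}$ and $\beta(\X_{\kappa(v),i})=r_{v,i}d_{v,i}$; this is where $\delta_v'$ enters, via the Picard-functor analysis in \cite{LLR}, not via intersection theory. The paper then compares the complex $(\alpha,\beta)$ with $(\tilde\alpha,\gamma^\star)$ by the diagonal scaling $r:(n_i)\mapsto(r_{v,i}n_i)$, so that $\tilde\alpha=r\alpha$, and a single Snake Lemma computation (using $\gcd(r_{v,i}d_{v,i})=\delta_v$ from (\ref{liu-colliot-saito})) produces the factor $\prod_i r_{v,i}/(\delta_v/d_v)$ relating $\ker(\gamma^\star)/\im(\tilde\alpha)$ to $\ker(\beta)/\im(\alpha)$. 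Combining gives the formula directly. I would recommend replacing your geometric-descent plan with this rational argument; the key input you are missing is precisely the Bosch--Liu/LLR exact sequence.
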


\begin{proof} Recall that we denote by $d_{v,i}$ the multiplicity of the irreducible component $\X_{\kappa(v),i}$ in the fibre $\X_{\kappa(v)}$. Let $\bz^{C_v}$ be the group of divisors on $\X$ supported in $\X_{\kappa(v)}$ and denote by $<D_1,D_2>=\deg\co(D_1)\vert_{D_2}$ the intersection number of two divisors.

Taking $G_{\kappa(v)}$-cohomology of (\ref{evdef}) we obtain a short exact sequence
\[ 0\to \bz\xrightarrow{\gamma} \bz^{C_v} \to E_v^{G_{\kappa(v)}}\to H^1(G_{\kappa(v)},\bz)=0 \]
where $\gamma:1\mapsto \X_{\kappa(v)}=\sum_i d_{v,i}\,\X_{\kappa(v),i}$ by \cite{grothbrauer}[(4.22)]. So $\im(\gamma)$  lies in the image of $\Pic(\co_F)$ and $\tilde{E}_v:=E_v^{G_{\kappa(v)}}$ is the subgroup of $\Pic^0(\X)/\Pic(\co_F)$ generated by divisors supported in $\X_{\kappa(v)}$. Denoting by $N^\star=\Hom_\bz(N,\bz)$ the $\bz$-dual define a complex
\[ \bz^{C_v}\xrightarrow{\tilde{\alpha}}\bz^{C_v}\cong (\bz^{C_v})^\star\xrightarrow{\gamma^\star}\bz^\star\cong\bz\]
where
\[\tilde{\alpha}(D)=\sum_i <D,\X_{\kappa(v),i}>\X_{\kappa(v),i}.\]
Since the intersection pairing is trivial on the image of $\Pic(\co_F)$  the map $\tilde{\alpha}$ factors through $\tilde{E}_v$ and
\[\ker(\gamma^\star)/\im(\tilde{\alpha})\cong\tilde{E}_v^\star/\tilde{\alpha}(\tilde{E}_v)\] has cardinality
$|\det(<b_i,b_j>)|$ where $\{b_i\}$ is a $\bz$-basis of $\tilde{E}_v/(\tilde{E}_v)_{tor}$. Since
$$(\tilde{E}_v)_{tor}\cong\bz/d_v\bz;\quad\quad d_v:=\gcd(d_{v,i})$$
and since the Arakelov intersection pairing is given by $<-,->\log Nv$ \cite{Hriljac} we find
\begin{equation} \frac{\Delta(E_v^{G_{\kappa(v)}})}{(\log Nv)^{\#C_v-1}}=\frac{\Delta(\tilde{E}_v)}{(\log Nv)^{\#C_v-1}}=\frac{|\det(<b_i,b_j>)|}{d_v^2}=\frac{\#\ker(\gamma^\star)/\im(\tilde{\alpha})}{d_v^2}.\label{delta1}\end{equation}

On the other hand, by \cite{Bosch-Liu}[Thm. 1.11] and \cite{LLR}[proof of Lem. 4.4] there is an exact sequence
\begin{equation} 0\to\ker(\beta)/\im(\alpha) \to \Phi_v\to \bz/(\delta'_v/d_v)\bz\to 0\label{delta4}\end{equation}
where
\[ \bz^{C_v} \xrightarrow{\alpha} \bz^{C_v}\xrightarrow{\beta} \bz\]
are the maps
\[\alpha(D)=\sum_i r_{v,i}^{-1}<D,\X_{\kappa(v),i}>\X_{\kappa(v),i}\quad\quad\beta(\X_{\kappa(v),i})=r_{v,i}d_{v,i}.\]

We have $\tilde{\alpha}=r\alpha$ where $r:\bz^{C_v}\to\bz^{C_v}$ is the map $(n_i)\mapsto (r_{v,i}n_i)$. The Snake Lemma applied
to
\[\begin{CD} 0 @>>> \bz^{C_v} @>r>> \bz^{C_v} @>>> \prod_i\bz/r_{v,i}\bz @>>> 0\\
@. @V\beta VV @V\gamma^\star VV @VVV @.\\
0 @>>> \bz @>\sim >> \bz @>>> 0 @>>> 0
\end{CD}\]
gives an exact sequence
\[ 0\to\ker(\beta)\xrightarrow{r}\ker(\gamma^*) \to R \to 0\]
where
\begin{equation} \#R =\frac{(\prod_i r_{v,i})\#\cok(\gamma^*)}{\#\cok(\beta)}=\frac{(\prod_i r_{v,i}) d_v}{\gcd(r_{v,i}d_{v,i})}
=\frac{\prod_i r_{v,i}}{(\delta_v/d_v)}\label{delta3}\end{equation}
using (\ref{liu-colliot-saito}). Hence
\begin{equation} \#\ker(\gamma^\star)/\im(\tilde{\alpha})=\#\ker(\gamma^\star)/r(\im(\alpha))=\#R\cdot\#\ker(\beta)/\im(\alpha).  \label{delta2}\end{equation}
Combining (\ref{delta1}),(\ref{delta2}),(\ref{delta3}) and (\ref{delta4}) finishes the proof of Lemma \ref{evintersection}.
\end{proof}

\begin{lemma} Let $\mathcal J$ be the Neron model of $J_F$ and $\Omega(\mathcal J)$ the determinant of the period isomorphism between
the free, finitely generated abelian groups $H^1(J_F(\bc),(2\pi i)\bz)^{G_\br}$ and $H^1(\mathcal J,\co_{\mathcal J})$. Then
\[ \Omega(\mathcal J)=\Omega(\X).\]
\label{neronperiod}\end{lemma}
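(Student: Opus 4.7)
The plan is to compare the Betti and coherent sides of the period isomorphism for $\X$ and $\mathcal J$ separately, and then verify that the two period maps are intertwined by these comparisons.

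For the Betti side, I would use the classical fact that for each archimedean place of $F$, the complex-analytic Jacobian of $\X_\bc$ is $J_F \otimes_F \bc$, and the Abel--Jacobi morphism induces a canonical $G_\br$-equivariant isomorphism of integral Hodge structures of weight one,
\[ H^1(J_F(\bc), (2\pi i)\bz) \xrightarrow{\sim} H^1(\X(\bc), (2\pi i)\bz). \]
Passing to $G_\br$-invariants identifies the two free finitely generated abelian groups entering the definitions of $\Omega(\X)$ and $\Omega(\mathcal J)$.

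For the coherent side, I would use the Néron mapping property applied to the generic-fiber identification $J_F = \Pic^0_{\X_F/F}$ to produce a canonical morphism of $S$-group functors $\Pic^0_{\X/S} \to \mathcal J^0$, inducing a morphism on Lie algebras. By Raynaud's theorem, $\Pic^0_{\X/S}$ is representable by an algebraic space smooth over $S$, and its Lie algebra is $R^1 f_* \co_\X$; correspondingly $\mathrm{Lie}(\mathcal J^0/S)$ computes $H^1(\mathcal J, \co_{\mathcal J})$ via $R^1\pi_* \co_{\mathcal J}$ (taking global sections over the affine $S$). The induced map $R^1 f_* \co_\X \to \mathrm{Lie}(\mathcal J/S)$ is an isomorphism after inverting the obstruction to cohomological flatness: its kernel is exactly the torsion subgroup of $R^1 f_* \co_\X$, which matches the failure of $\Pic^0_{\X/S} = \mathcal J^0$ as quantified by Raynaud, and its cokernel vanishes.

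Compatibility of the two period isomorphisms is then automatic: both arise from the Hodge decomposition of $H^1(-, \bc)$ applied to $\X_\bc$ and $J_{F,\bc}$, and the Abel--Jacobi morphism intertwines the Hodge pieces. The ``$\Omega$ includes torsion'' convention recorded just after the definition of $\Omega(\X)$ is precisely what is required so that the discrepancy between the lattices $R^1 f_* \co_\X$ and $\mathrm{Lie}(\mathcal J/S)$ translates into equality of the two period determinants.

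The main obstacle I anticipate is the careful torsion bookkeeping on the coherent side: identifying the kernel of $R^1 f_* \co_\X \to \mathrm{Lie}(\mathcal J/S)$ with the torsion of the source (via Raynaud's analysis of non cohomologically flat fibrations) and matching this with the convention used in the paper to define $\Omega(\X)$ in the presence of torsion. Once this is done, the remaining comparison is formal naturality of the Hodge decomposition.
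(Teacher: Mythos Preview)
Your Betti-side comparison is correct and is exactly what the paper does. The gap is on the coherent side. Your assertion that $\Pic^0_{\X/S}$ is smooth over $S$ is false precisely when $f$ is not cohomologically flat, and in that case the N\'eron mapping property does not apply directly to $\Pic^0_{\X/S}$. More importantly, your claim that the natural map $H^1(\X,\co_\X)\to H^1(\mathcal J,\co_{\mathcal J})$ has vanishing cokernel is wrong in general: both kernel and cokernel can be nonzero.

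The paper handles this by invoking the theorem of Liu--Lorenzini--Raynaud \cite{LLR}[Thm.~3.1], which asserts that for each finite place $v$ the natural map
\[ H^1(\X_{S_v},\co_{\X_{S_v}})\to H^1(\mathcal J_{S_v},\co_{\mathcal J_{S_v}}) \]
has kernel and cokernel \emph{of the same length}. That equality of lengths (not vanishing of the cokernel) is what makes the two determinants agree. It is a genuinely nontrivial input: LLR obtain it by a careful analysis relating $\Pic^0_{\X/S}$, its maximal separated quotient, and the N\'eron model. Your proposed bookkeeping would give the result only in the cohomologically flat case, where kernel and cokernel both vanish; in the general case your argument, as stated, would produce the wrong answer by a factor equal to the order of the torsion in $H^1(\X,\co_\X)$.
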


\begin{proof} We have an isomorphism of $G_\br$-modules $$H^1(\X(\bc),(2\pi i)\bz)\cong H^1(J_F(\bc),(2\pi i)\bz).$$ By \cite{LLR}[Thm.3.1], for each finite place $v$ of $F$ the natural map \cite{LLR}[1.4]
\[ H^1(\X_{S_v},\co_{\X_{S_v}})\to H^1({\mathcal J}_{S_v},\co_{{\mathcal J}_{S_v}})\]
has kernel and cokernel of the same length. From this the statement is immediate.
\end{proof}

We now have all the ingredients to compare (\ref{vanishingatone}) and (\ref{BSDforX}) with the Birch and Swinnerton-Dyer conjecture. Taking cohomology over $\overline{S}$ of the top row in (\ref{genfibre}) gives an exact sequences
\[ 0\to \bigoplus_{v\in\Sigma_f}E_v^{G_{\kappa(v)}}\to \Pic^0(\X)/\Pic(\co_F) \to J_F(F) \to A\to 0\]
where $A$ is a finite abelian group of cardinality $\alpha$ defined in (\ref{alphadef}). Since the $\bz$-rank of $E_v^{G_{\kappa(v)}}$ is $\#C_v-1$ it is clear from (\ref{vancomp}) that (\ref{vanishingatone}) is equivalent to
\[ \ord_{s=1}L(J_F,s)=\rank_\bz J_F(F).\]
By Lemma \ref{innerprod} we find
\begin{align} \prod_{v\in\Sigma_f}\Delta(E_v^{G_{\kappa(v)}})\Delta(J_F(F))=&\Delta(\Pic^0(\X)/\Pic(\co_F))\Delta(A)\notag\\
=&\frac{\Delta(\Pic^0(\X)/\Pic(\co_F))}{\alpha^2}.
\label{deltaid}\end{align}
Applying Prop. \ref{Geisserprop}, (\ref{deltaid}) and Lemma \ref{evintersection} to (\ref{BSDforX}) we find
\begin{align*} \zeta^*(H^1,1)=&\frac{\#\Br(\overline{\X})\cdot\delta^2\cdot\Omega(\X)\cdot R(\X)}{(\#(\Pic^0(\X)_{tor}/\Pic(\co_F)))^2}\cdot\prod\limits_{\text{$v$ real}}\frac{\#\Phi_v}{\delta_v'\delta_v}\\
=&\frac{\prod_v\delta_v'\delta_v}{\alpha^2} \#\Sha(J_F)\cdot\Delta(\Pic^0(\X)/\Pic(\co_F))\cdot\Omega(\X)\cdot\prod\limits_{\text{$v$ real}}\frac{\#\Phi_v}{\delta_v'\delta_v}\\
=&\prod_{v\in\Sigma_f}\left(\frac{\delta_v'\delta_v}{\#\Phi_v}\Delta(E_v^{G_{\kappa(v)}})\right)\cdot\Delta(J_F(F))\cdot  \#\Sha(J_F)\cdot\Omega(\X)\cdot\prod\limits_{v\in\Sigma}\#\Phi_v\\
=&\prod_{v\in\Sigma_f}\left(\log Nv^{|C_v|-1}\prod_{i\in C_v}r_{v,i}\right)\cdot\Delta(J_F(F))\cdot  \#\Sha(J_F)\cdot\Omega(\X)\cdot\prod\limits_{v\in\Sigma}\#\Phi_v
\end{align*}
and (\ref{leadcomp}) and Lemma \ref{neronperiod} show that this identity is equivalent to
\begin{align*} L^*(J_F,1)=&\Delta(J_F(F))\cdot  \#\Sha(J_F)\cdot\Omega(\X)\cdot\prod\limits_{v\in\Sigma}\#\Phi_v\\
=&\frac{R(J_F(F))}{(\#J_F(F)_{tor})^2}\cdot  \#\Sha(J_F)\cdot\Omega(\mathcal J)\cdot\prod\limits_{v\in\Sigma}\#\Phi_v
\end{align*}
 which is the Birch and Swinnerton-Dyer formula. Here we are also using the fact that the Arakelov intersection pairing induces the negative of the Neron-Tate height pairing on $J_F(F)$ by \cite{Hriljac}[Thm.3.1].

\begin{remark} \label{tate} Let
\[\overline{f}:\overline{\X}\to\overline{S}\]
be a flat morphism from a smooth projective surface $\overline{\X}$ to a smooth projective connected curve $\overline{S}\xrightarrow{\pi}\Spec(k)$ over a finite field $k$ and assume $\overline{f}$ has geometrically connected fibres.
 By \cite{bbd}[Rem. 5.4.9] $$K:=R\overline{f}_*\bq_l$$ is a pure complex in the derived category of $l$-adic sheaves on $\overline{S}$ and by \cite{bbd}[Thm. 5.4.5, Thm. 5.4.10] we have
\begin{align*} K\cong & {^p}\mathcal H^1(K)[-1]\oplus{^p}\mathcal H^2(K)[-2]\oplus {^p}\mathcal H^3(K)[-3]\\
\cong & \bq_l[0]\oplus{^p}\mathcal H^2(K)[-2]\oplus \bq_l(-1)[-2] \end{align*}
where ${^p}\mathcal H^i$ refers to the perverse $t$-structure (which agrees with the ordinary $t$-structure over $\Spec(k)$).
Assume in addition that $\overline{f}$ has large monodromy in the sense that $R^0\pi_*R^1\overline{f}_*\bq_l=0$. With notation as in the proof of Lemma \ref{lfunction} we have
\[{^p\mathcal H^2}(K)=(R^1\overline{f}_*\bq_l)[1]\oplus E\otimes\bq_l(-1)[0]\]
and hence our assumption implies $R^{-1}\pi_*{^p\mathcal H^2}(K)=0$ which together with \cite{bbd}[Thm. 5.4.10] gives
$$R^i\pi_*{^p\mathcal H^2}(K)=0; \quad i\neq 0.$$
We find
\begin{align} R^2(\pi\circ\overline{f})_*\bq_l\cong R^2\pi_*(K)\cong &R^2\pi_*\bq_l\oplus R^0\pi_*{^p}\mathcal H^2(K)\oplus R^0\pi_*\bq_l(-1)\notag\\
\cong &\bq_l(-1)\oplus R\pi_*{^p}\mathcal H^2(K)\oplus \bq_l(-1).\label{zetaabs}\end{align}
The Zeta function of the $l$-adic sheaf $R^2(\pi\circ\overline{f})_*\bq_l$ was referred to as $\zeta(H^2_{abs},s)$ in the introduction and the Zeta function of the $l$-adic complex $R\pi_*{^p}\mathcal H^2(K)$ is the function $\zeta(H^1,s)$ discussed in this paper, albeit over the base $\overline{S}$ rather than $S$. By (\ref{zetaabs}) we have
\[ \zeta(H^2_{abs},s)=\zeta(H^1,s)(1-q^{1-s})^2\]
where $q=\#k$. It seems likely that the proof in \cite{gordon},\cite{LLRcor} of the equivalence of the conjecture of Artin and Tate for $\zeta^*(H^2_{abs},1)$ \cite{tate66} with the Birch and Swinnerton-Dyer conjecture for the Jacobian of the generic fibre of $\overline{f}$ can be somewhat simplified using our approach to $\zeta^*(H^1,1)$ via the complex $P^0$.
\end{remark}

\begin{bibdiv}
\begin{biblist}
\bib{bbd}{article}{
  author={Beilinson A.},
  author={Bernstein, J.},
  author={Deligne P.},
  title={Faisceaux pervers},
  book={title={Analyse et topologie sur les \'espaces singuliers}, series={Asterisque}, volume={100}, publisher={Soc. Math. France}, date={1982}},
}

\bib{Bloch87}{article}{
  author={Bloch, S.},
  title={de Rham cohomology and conductors of curves.},
  journal={Duke Math. J.},
  volume={54},
  date={1987},
  number={2},
  pages={295\ndash 308},
}

\bib{Bosch-Liu}{article}{
  author={Bosch, S.},
  author={Liu, Q.},
  title={Rational points of the group of components of a Neron model},
  journal={Manuscripta Math.},
  date={1999},
  volume={98},
  number={3},
  pages={275\ndash 293},
}

\bib{Colliot-Saito}{article}{
  author={Colliot-Th\'el\`ene, J.L.},
  author={Saito, S.},
  title={Z\'ero-cycles sur les vari\'et\'es p-adiques et groupe de Brauer},
  journal={Int. Math. Res. Not.},
  number={4},
  date={1996},
  pages={151\ndash 160},
}

\bib{Flach-Morin-16}{article}{
  author={Flach, M.},
  author={Morin, B.},
  title={Weil-\'etale cohomology and Zeta-values of proper regular arithmetic schemes},
  journal={Documenta Mathematica},
  volume={23},
  date={2018},
  pages={1425\ndash 1560},
}

\bib{Flach-Morin-18}{article}{
  author={Flach, M.},
  author={Morin, B.},
  title={Deninger's Conjectures and Weil-Arakelov cohomology},
  status={preprint 2018},
}

\bib{fuji96}{article}{
  author={Fujiwara, K.},
  title={A Proof of the Absolute Purity Conjecture (after Gabber)},
  book={title={Algebraic Geometry 2000, Azumino (Hotaka)}, series={Advanced Studies in Pure Mathematics 36},date={2002}, publisher={Math. Soc. Japan}},
  pages={153\ndash 183},
}

\bib{Geisser06}{article}{
  author={Geisser, T.},
  title={Arithmetic cohomology over finite fields and special values of $\zeta $-functions},
  journal={Duke Math. Jour.},
  volume={133},
  number={1},
  date={2006},
  pages={27\ndash 57},
}

\bib{Geisser17}{article}{
  author={Geisser, T.},
  title={Comparing the Brauer to the Tate-Shafarevich group},
  journal={J. Inst. Math. Jussieu},
  status={to appear},
}

\bib{gordon}{article}{
  author={Gordon, W.J.},
  title={Linking the Conjectures of Artin-Tate and Birch-Swinnerton-Dyer},
  journal={Compos. Math.},
  number={2},
  volume={38},
  date={1979},
  pages={163\ndash 199},
}

\bib{grossharris}{article}{
  author={Gross, B.H.},
  author={Harris, J.},
  title={Real Algebraic Curves},
  journal={Ann. scient. \'Ec. Norm. Sup.},
  volume={14},
  date={1981},
  pages={157\ndash 182},
}

\bib{grothbrauer}{article}{
  author={Grothendieck, A.},
  title={Le groupe de Brauer III},
  book={title={Dix expos\'es sur la cohomologie des sch\'emas}, place={Amsterdam}, publisher={North Holland}, date={1968}},
}

\bib{sga1}{book}{
  author={Grothendieck, A.},
  title={Rev\'etements \'etales et groupe fondamental (SGA 1)},
  publisher={Springer},
  date={1971},
  series={Lecture Notes in Math \bf {224}},
}

\bib{sga4}{book}{
  author={Grothendieck, A.},
  author={Artin, M.},
  author={Verdier, J.L.},
  title={Theorie des Topos et Cohomologie Etale des Schemas (SGA 4)},
  publisher={Springer},
  date={1972},
  series={Lecture Notes in Math \bf {269, 270, 271}},
}

\bib{hartshorne}{book}{
  author={Hartshorne, R.},
  title={Algebraic Geometry},
  publisher={Springer},
  series={Graduate texts in Mathematics \bf {52}},
  date={1977},
}

\bib{Hriljac}{article}{
  author={Hriljac, P.},
  title={Heights and Arakelov's intersection theory},
  journal={Amer. J. Math.},
  volume={107},
  number={1},
  date={1985},
  pages={23\ndash 38},
}

\bib{Illusie71}{book}{
  author={Illusie, L.},
  title={Complexe cotangent et d\'eformations. I.},
  series={Lecture Notes in Mathematics \bf {239}},
  publisher={Springer},
  date={1971},
}

\bib{LiLiuTian}{article}{
  author={Li, Y.},
  author={Liu, Y.},
  author={Tian, Y.},
  title={On the Birch and Swinnerton-Dyer Conjecture for CM Elliptic Curves over $\mathbb {Q}$},
  eprint={arXiv:math/1605.01481v1},
  date={5 May 2016},
}

\bib{Lichtenbaum69}{article}{
  author={Lichtenbaum, S.},
  title={Duality theorems for curves over $p$-adic fields},
  journal={Invent. Math.},
  volume={7},
  date={1969},
  pages={120\ndash 136},
}

\bib{LLR}{article}{
  author={Liu, Q.},
  author={Lorenzini, D.},
  author={Raynaud, M.},
  title={N\'eron models, Lie algebras, and reduction of curves of genus one},
  journal={Invent. Math.},
  volume={157},
  date={2004},
  pages={455\ndash 518},
}

\bib{LLRcor}{article}{
  author={Liu, Q.},
  author={Lorenzini, D.},
  author={Raynaud, M.},
  title={Corrigendum to N\'eron models, Lie algebras, and reduction of curves of genus one and The Brauer group of a surface},
  journal={Invent. Math.},
  volume={214},
  date={2018},
  pages={593\ndash 604},
}

\bib{miletale}{book}{
  author={Milne, J.S.},
  title={\'Etale Cohomology},
  series={Princeton Math. Series {\bf 17}},
  publisher={Princeton University Press},
  date={1980},
}

\bib{milduality}{book}{
  author={Milne, J.S.},
  title={Arithmetic duality theorems},
  series={Perspectives in Math. {\bf 1}},
  publisher={Academic Press},
  date={1986},
}

\bib{PoonenStoll}{article}{
  author={Poonen, B.},
  author={Stoll, M.},
  title={The Cassels-Tate pairing on polarized abelian varieties},
  journal={Annals of Math.},
  date={1999},
  volume={150},
  pages={1109\ndash 1149},
}

\bib{ray}{article}{
  author={Raynaud, M.},
  title={Sp\'ecialisation du foncteur du Picard},
  journal={Publ. Math. IHES},
  date={1970},
  volume={38},
  pages={27\ndash 76},
}

\bib{Saito89}{article}{
  author={Saito, S.},
  title={Arithmetic theory of arithmetic surfaces},
  journal={Ann. Math.},
  date={1989},
  pages={547\ndash 589},
  volume={129},
}

\bib{Sato07}{article}{
  author={Sato, K.},
  title={$p$-adic \'Etale Twists and Arithmetic Duality},
  journal={Ann. Scient. \'Ec. Norm. Sup.},
  date={2007},
  pages={519\ndash 588},
  volume={40},
  number={4},
}

\bib{Spiess96}{article}{
  author={Spiess, M.},
  title={Artin-Verdier duality for arithmetic surfaces},
  journal={Math. Ann.},
  volume={305},
  date={1996},
  pages={705\ndash 792},
}

\bib{stacks}{book}{
  author={The Stacks Project Authors},
  title={The Stacks Project},
  publisher={stacks.math.columbia.edu},
  date={2019},
}

\bib{tate66}{article}{
  author={Tate, J.},
  title={On the conjecture of Birch and Swinnerton-Dyer and a geometric analogue},
  date={1966},
  series={S\'eminaire Bourbaki 306},
  pages={1\ndash 26},
}

\bib{Voevodsky96}{article}{
  author={Voevodsky, V.},
  title={Homology of schemes},
  journal={Selecta Math. , New Series},
  volume={2},
  number={1},
  date={1996},
  pages={111\ndash 153},
}

\bib{Wan}{article}{
  author={Wan, X.},
  title={Iwasawa Main Conjecture for Supersingular Elliptic Curves and BSD Conjecture},
  eprint={arXiv:math/1411.6352v6},
  date={14 Apr 2019},
}

\bib{Zhong14}{article}{
  author={Zhong, C.},
  title={Comparison of dualizing complexes},
  journal={J. Reine Angew. Math.},
  volume={695},
  date={2014},
  pages={1\ndash 39},
}

%\bibselect{all-my-references}
\end{biblist}
\end{bibdiv}

\end{document}